\numberwithin{equation}{section}
\def\opn#1#2{\def#1{\operatorname{#2}}} 
\opn\chara{char} \opn\length{\ell}
 \opn\projdim{proj\,dim} \opn\injdim{inj\,dim}
\opn\rank{rank} \opn\depth{depth} \opn\grade{grade}
\opn\height{height} \opn\embdim{emb\,dim} \opn\codim{codim}
\opn\Tr{Tr} \opn\bigrank{big\,rank}
\opn\superheight{superheight}\opn\lcm{lcm}
\opn\trdeg{tr\,deg}%
\opn\reg{reg} \opn\lreg{lreg}
\opn\Ker{Ker} \opn\Coker{Coker} \opn\Im{Im} \opn\Hom{Hom}
\opn\Tor{Tor} \opn\Ext{Ext} \opn\End{End} \opn\Aut{Aut} \opn\id{id}
\opn\nat{nat}
\opn\pff{pf}
\opn\Pf{Pf} \opn\GL{GL} \opn\SL{SL} \opn\mod{mod} \opn\ord{ord}
\def\Implies{\ifmmode\Longrightarrow \else
     \unskip${}\Longrightarrow{}$\ignorespaces\fi}
\def\implies{\ifmmode\Rightarrow \else
     \unskip${}\Rightarrow{}$\ignorespaces\fi}
\def\iff{\ifmmode\Longleftrightarrow \else
     \unskip${}\Longleftrightarrow{}$\ignorespaces\fi}
\newtheorem{Theorem}{Theorem}[section]
\newtheorem{thm}{Theorem}[section]
\newtheorem{lem}[Theorem]{Lemma}
\newtheorem{rem}[Theorem]{Remark}
\newtheorem{ex}[Theorem]{Example}
\newtheorem{define}[Theorem]{Definition}
\let\epsilon=\varepsilon
\let\kappa=\varkappa
\opn\ini{in} \opn\inm{inm} \opn\Sym{Sym} \opn\diag{diag}
\opn\Ii{(i)} \opn\Iii{(ii)}
\begin{document}
	\title{  On spectral Petrov-Galerkin method for solving fractional initial value problems in weighted Sobolev space}

	\author[a]{Shengyue Li \footnote{ lishengyue@seu.edu.cn} }
	\author[a]{Wanrong Cao \thanks{Corresponding author: wrcao@seu.edu.cn} ~}
	\affil[a]{School of Mathematics,
			Southeast University, Nanjing 210096, P.R. China.}
	\author[b]{ \ Zhaopeng Hao \footnote{hao27@purdue.edu} }
	\affil[b]{ Department of Mathematics, Purdue University, West Lafayette, IN 47906, USA.}
	\renewcommand\Authands{ \ and}

	\date{}
	\maketitle
	\noindent \textbf{Abstract}: 
		In this paper, we investigate a spectral Petrov-Galerkin method for fractional initial value problems. Singularities of the solution at the origin inherited from the weakly singular kernel of the fractional derivative are considered, and the regularity is constructed for the solution in weighted Sobolev space. We present an optimal error estimate of the spectral Petrov-Galerkin method, and prove that the convergence order of the method in the weighted $L^2$-norm is $3\alpha+1$ for smooth source term, where $\alpha$ is the order of the fractional derivative.
		An iteration algorithm with a quasi-linear complexity is considered to solve the produced linear system. Numerical experiments verify the theoretical findings and show the efficiency of the proposed algorithm, and exhibit that the presented numerical method works well for some time-fractional diffusion equations after suitable temporal semi-discrete.
	
	\ 
	
	\noindent \textbf{Keywords}: fractional derivative singularity, weighted Sobolev space, spectral method, optimal error estimates

\section{Introduction}
Time-fractional differential equations (TFDEs) have attracted significant attention due to the capability of modeling some long-time memory complex systems and anomalous diffusion problems \cite{car,nig,hat,per}. With the increasing application, numerical methods have also been well studied for TFDEs, such as  finite difference schemes \cite{ya,cao1,hu,lub1}, finite element methods \cite{xie1,must,mcl}, spectral methods \cite{xu1,zhang1,chen2}, etc.  The model TFDE has the form
\begin{eqnarray}\left\{\label{eq:TFDE}\begin{aligned}
		&{}_{0}D_t^{\alpha}u+Au=g(x,t),&x\in \Omega,\ t\in I,\quad\quad\ \\
		&u(x,0)=0,&x\in \Omega,\quad \quad \quad \quad\quad\  \\
		&u=0, &x \in \partial \Omega,\ t\in I,\ \ \ \ \ 
	\end{aligned}\right.\end{eqnarray}
where  $\Omega\subset\mathbb{R}$ is an open interval, the linear operator $A$ on $\Omega$ is positive definite,  $g$ is a given source term,  and ${}_{0}D_t^{\alpha}$ is the  Caputo fractional derivative with $0<\alpha<1$, defined by
\begin{equation*}
	{}_{0}D_t^{\alpha}y(t)=\frac{1}{\Gamma(1-\alpha)}\int_{0}^{t}\frac{y'(s)}{(t-s)^{\alpha}}ds.
\end{equation*} 
By some suitable space discretization, the TFDE \eqref{eq:TFDE} can be converted a fractional initial value problem (FIVP).  In Example \ref{exadd} of Section \ref{section6}, we give the details of solving \eqref{eq:TFDE} by applying the proposed algorithm in this framework, where $-A$ is the Laplacian operator.

As a simple but widely occurring model, the FIVP has been well studied in standard Sobolev space \cite{chen1,Dieth1,xu1,xie1,zhang1}. The exact solution of fractional differential equations (FDEs) inherits singularities caused by the weakly singular kernel in fractional derivative operators, so the regularity will be very low in standard Sobolev space even for smooth data. The fact has been revealed in \cite{chen1,jin1,martin1,wang1}, which motivated the use of weighted Sobolev space to better incorporate singularities at the endpoints.

Consider the following fractional initial value problem (FIVP)
\begin{eqnarray}\label{FIVP}\begin{aligned}
		{}_{0}D_t^{\alpha}u+\lambda u&=f(t),\ t\in I=(0,T],\\
		u(0)&=0,
\end{aligned}\end{eqnarray}
where $\lambda$ is a nonnegative constant,  $f(t)$ is a given function.  In spite of the available explicit form of the solution to FIVP \eqref{FIVP}, which has been given in \cite{Dieth1}, the  exact solution expressed by Mittag-Leffler functions is hard to use directly, either not trivial to  observe its regularity. 
In this paper, we analyze the regularity of the solution  in weighted Sobolev space, and investigate the convergence order of a spectral Petrov-Galerkin method based on the regularity results.

For fractional boundary value problems, the regularity of solutions has been well investigated in weighted Sobolev space, see  \cite{acosta1,acosta2,ervin2,hao1,hao2,hao3,zhang2}. For FIVP \eqref{FIVP} with $\lambda=0$, Zhang \cite{zhang1} has considered its regularity in weighted Sobolev space, and presented an optimal error estimates of spectral Petrov-Galerkin and collocation methods. However, the semi-discrete in temperal of the TFDE \eqref{eq:TFDE} leads to \eqref{FIVP} with $\lambda\neq 0$. Actually, this more general case can provide a framework for dealing with some diverse fractional problems, which appear in plenty of models,  e.g.,  anomalous diffusion model  on fractals \cite{nig}, options pricing model in financial market \cite{car}, viscoelastic models in blood flow \cite{per}, etc. Hence, it is meaningful to reveal the regularity of solution to the FIVP \eqref{FIVP} in weighted Sobolev space and to design its efficient numerical approximation.

In this work, we present a full regularity analysis in weighted Sobolev space for the FIVP \eqref{FIVP} by using a bootstrapping technique. Specifically,  we prove that the regularity index for $t^{-\alpha}u$ is $\alpha+\min\{r,2\alpha+1-\epsilon\}$ when $\lambda>0$, where $\epsilon>0$ is an arbitrary small number and $r\geq 0$ is regularity index of $f$ in a weighted Sobolev space, see Theorem \ref{theorem3.6}. Moreover, when the source term $f\in C^1[0,T]$ and vanishes at the origin, we analyze the regularity of the solution by converting the original problem to a weakly singular Volterra integral equation and presenting the expression of the solution. It indicates that the regularity indexes of  $t^{-\alpha}u$  respectively are  $3\alpha+1-\epsilon$ and $3\alpha+3-\epsilon$  for smooth enough source term $f$ with $f(0)\not=0$ and $f(0)=0$.


In order to avoid the loss of accuracy due to the singularity near the initial or boundary, some advances have been made among the numerical community.  Nonuniform grids have been used to keep errors small near the singularity  \cite{ya,zhao,must}; Correction terms were adopted to present algorithms with globally high-order convergence \cite{lub,cao1,zeng3}; Non-polynomial basis functions have been employed to compensate for the weakly singular behavior of solutions at the endpoints and  enhance the accuracy of numerical methods \cite{tang1,zeng1,chen2}.

Over the past two decades,   spectral methods based on the nonpolynomial basis functions were proposed for solving  fractional model.  In \cite{zayer3} and \cite{zayer2}, Zayernouri and Karniadakis  developed an exponentially accurate fractional spectral collocation method with poly-fractionomials for solving steady-state and time-dependent FDEs. In \cite{chen1}, Chen, Shen, and Wang studied approximation properties of generalized Jacobi polynomials in weighted Sobolev space and developed a spectral Petrov-Galerkin method for FIVPs without reaction terms. Recently, in \cite{chen2}, the authors developed a spectral Galerkin method  in time with the log orthogonal functions for solving subdiffusion equations which shows the spectral accuracy, provided some assumptions on the smoothness of data.

In the current work,
we employ the weighted basis $t^{\alpha}P_N(I)$ to approximate the solution of FIVP \eqref{FIVP} in the framework of Petrov-Galerkin method, where $P_N(I)$ is the set of algebraic polynomials of order up to $N$. Though the idea has been widely used to construct spectral methods \cite{zhang1,chen1,zeng1} for FDEs to recover accuracy from the endpoint singularity, the occurrence of the reaction term makes the error estimate in weighted Sobolev space more challenging and complicated. Thanks to the idea of introducing an ultra-weak formulation for fractional elliptic equations \cite{hao2}, we consider also a weak Petrov-Galerkin formulation of \eqref{FIVP} and  are able to prove the regularity in  weighted Sobolev space when  the regularity index of $f$ is $r\geq -\alpha$. Based on this formulation, we present an optimal error estimate for the spectral Petrov-Galerkin method.
For smooth enough source term $f$, the convergence order, in weighted $L^2$-norm,  is $3\alpha+3-\epsilon$ if $f(0)=0$, and $3\alpha+1-\epsilon$ if $f(0)\neq 0$, while for relatively rough $f$, the order of $r+\alpha$ can be obtained, see Theorem \ref{conver}.

In summary, the main contributions of this work lie in
\vspace{-\topsep}
\begin{itemize*}
	\item giving a full regularity analysis of FIVP \eqref{FIVP} in weighted Sobolev space for both smooth and rough data,
	\item proving optimal error estimates for the spectral Petrov-Galerkin method without any regularity assumption on the analytical solution,
	\item providing a framework to solve TFDEs after suitable semi-discrete in space.
\end{itemize*}
\vspace{-\topsep}
To the best of our knowledge, we are not aware of any other work  presenting  optimal error estimates of  spectral Petrov-Galerkin methods for the FIVP \eqref{FIVP} with $\lambda>0$. Here error estimates are consistent with the regularity results and numerical experiments confirm these theoretical predictions.

The remainder of this paper is outlined as follows. In Section 2, we introduce Jacobi polynomials and fractional Sobolev spaces. The preliminary definitions and necessary lemmas are also given in this section. For the regularity index $r\geq 0$, the regularity of exact solution of the FIVP \eqref{FIVP} is studied in weighted Sobolev space in Section \ref{section3}. In Section \ref{section4}, the well-posedness and regularity of solution of the weak formulation are presented in weighted Sobolev space for $r\geq -\alpha$. Based on the regularity analysis, the spectral Petrov-Galerkin method is presented and its optimal error estimate is given  in section \ref{section5}. In section \ref{section6}, we present both direct and iteration solvers and give several numerical experiments  to verify the theoretical findings, where it is observed that the convergence order and accuracy of the numerical solutions in standard $L^2$-norm are higher than its in weighted $L^2$-norm, and the presented iteration solver is efficient; see Example \ref{ex1}.

\section{Preliminary}\label{section2}
\setcounter{equation}{0}
In this section, we  recall some necessary notations and definitions of fractional derivatives, Jacobi polynomials,  and  Sobolev spaces to be used later.

\begin{define}[\cite{samko1}]
	For $0<\alpha<1$, the left and right fractional integrals are defined, respectively, as
	\begin{align}\label{fi}
		{}_{0}I_t^{\alpha}u(t)&=\frac{1}{\Gamma(\alpha)}\int_0^{t}\frac{u(s)}{(t-s)^{1-\alpha}}ds,\ t>0, \nonumber\\
		{}_{t}I_T^{\alpha}u(t)&=\frac{1}{\Gamma(\alpha)}\int_t^{T}\frac{u(s)}{(s-t)^{1-\alpha}}ds,\ t<T.\nonumber
	\end{align}
	For $0<\alpha<1$,  the left and right Caputo fractional derivatives are defined by
	\begin{equation*}\label{rlfd}
		{}_{0}D_{t}^{\alpha}u(t)={}_{0}I_t^{1-\alpha}( D u(t)),\ t>0, \ {}_{t}D_{T}^{\alpha}u(t)=-{}_{t}I_T^{1-\alpha} ( Du(t)),\ t<T,
	\end{equation*}
	and the left and right Riemann-Liouville fractional derivatives are defined by
	\begin{equation*}\label{rlfd}
		{}^{RL}_{\ \ \! 0}\!D_{t}^{\alpha}u(t)=D( {}_{0}I_t^{1-\alpha}u(t)),\ t>0, \ {}^{RL}_{\ \  t}\!D_{T}^{\alpha}u(t)=-D( {}_{t}I_T^{1-\alpha}u(t)), \ t<T.
	\end{equation*}
\end{define}
The two definitions are linked by the following relationships
\begin{equation}\label{relationship}
	{}^{RL}_{\ \ \! 0}\!D_{t}^{\alpha}u(t)=\frac{u(0)}{\Gamma(1-\alpha)t^{\alpha}}+{}_{0}D_{t}^{\alpha}u(t),\ \ {}^{RL}_{\ \  t}\!D_{T}^{\alpha}u(t)=\frac{u(T)}{\Gamma(1-\alpha)(T-t)^{\alpha}}+{}_{t}D_{T}^{\alpha}u(t).
\end{equation}
Note that, by virtue of \eqref{relationship}, as the homogeneous condition considered in \eqref{FIVP}, the Caputo definition coincides with the Riemann-Liouville version. Hence, many useful tools established by using Riemann-Liouville derivatives can be applied here.

According to the inverse property in \cite {Dieth1}, for any absolutely integrable function $v$ and real $\alpha\geq0$,
\begin{eqnarray}\label{inverse}
	{}_{0}D_t^{\alpha}{}_{0}I_t^{\alpha}v(t)=v(t), \mbox{ a.e. in I}.
\end{eqnarray}

\begin{lem}[\cite{zhang1}]\label{parts}
	Supposing that $u\in H^{\alpha}(I)$, $0<\alpha<1$, and $v\in H^{\alpha_2}(I)$, it holds that
	\begin{eqnarray}\label{pars}
		({}_{0}D_{t}^{\alpha}u,v)=({}_{0}D_{t}^{\alpha_1}u,{}_{t}D_{T}^{\alpha_2}v),
	\end{eqnarray}
	where $\alpha_1,\alpha_2\geq0$, and $\alpha_1+\alpha_2=\alpha$.
\end{lem}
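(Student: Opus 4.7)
The plan is to establish \eqref{pars} first for smooth $u,v$ and then extend by density to $u\in H^\alpha(I)$ and $v\in H^{\alpha_2}(I)$. The two workhorses are the semigroup property ${}_0 I_t^{\beta_1}\,{}_0 I_t^{\beta_2}={}_0 I_t^{\beta_1+\beta_2}$, and the $L^2$-duality $({}_0 I_t^{\beta}f,g)=(f,{}_t I_T^{\beta}g)$ for $\beta>0$, the latter being Fubini's theorem applied to the weakly singular kernel $(t-s)^{\beta-1}$.

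The algebraic core of the argument is one rewriting. Since $1-\alpha_1=(1-\alpha)+\alpha_2$, the semigroup property together with the Caputo definition ${}_0 D_t^\beta u={}_0 I_t^{1-\beta} u'$ yields
\[
{}_0 D_t^{\alpha_1} u \;=\; {}_0 I_t^{1-\alpha_1} u' \;=\; {}_0 I_t^{\alpha_2}\,{}_0 I_t^{1-\alpha} u' \;=\; {}_0 I_t^{\alpha_2}\bigl({}_0 D_t^{\alpha}u\bigr).
\]
Pairing with ${}_t D_T^{\alpha_2}v$ and moving ${}_0 I_t^{\alpha_2}$ across by the $L^2$-duality gives
\[
\bigl({}_0 D_t^{\alpha_1}u,\,{}_t D_T^{\alpha_2}v\bigr)\;=\;\bigl({}_0 D_t^{\alpha}u,\,{}_t I_T^{\alpha_2}\,{}_t D_T^{\alpha_2}v\bigr),
\]
so \eqref{pars} reduces to checking that ${}_t I_T^{\alpha_2}\,{}_t D_T^{\alpha_2}v=v$ against the test function ${}_0 D_t^{\alpha}u$.

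The residual calculation is where the endpoint behavior of $v$ enters, and this is the main obstacle. Writing ${}_t D_T^{\alpha_2}v=-{}_t I_T^{1-\alpha_2}v'$ and applying the semigroup once more followed by the fundamental theorem of calculus produces ${}_t I_T^{\alpha_2}\,{}_t D_T^{\alpha_2}v=v-v(T)$, which a priori leaves a boundary residual $-v(T)\int_0^T {}_0 D_t^{\alpha}u\,dt$. Following the convention of \cite{zhang1}, the space $H^{\alpha_2}(I)$ is defined so that either $v(T)=0$ (when the order admits a trace) or, equivalently via \eqref{relationship}, ${}_t D_T^{\alpha_2}v$ coincides with its Riemann-Liouville counterpart; in either case the residual vanishes and the required identity ${}_t I_T^{\alpha_2}\,{}_t D_T^{\alpha_2}v=v$ holds in the pairing.

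To close the argument, I would verify that both sides of \eqref{pars} are continuous bilinear forms on $H^\alpha(I)\times H^{\alpha_2}(I)$---the fractional derivatives map these spaces boundedly into $L^2$---so that density of smooth, endpoint-compatible functions yields the identity in the stated generality.
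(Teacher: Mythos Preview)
The paper does not prove this lemma; it is stated with a citation to \cite{zhang1} and no argument is given. So there is no ``paper's own proof'' to compare against, and your task was really to reconstruct the standard proof from the cited reference.

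Your approach---rewrite ${}_0D_t^{\alpha_1}u={}_0I_t^{\alpha_2}({}_0D_t^{\alpha}u)$ via the semigroup law, transfer ${}_0I_t^{\alpha_2}$ by the $L^2$-adjoint identity (Lemma~\ref{fparts}), and close by density---is the standard one and is correct in outline. The one place that deserves more care is the boundary residual. Your computation ${}_tI_T^{\alpha_2}\,{}_tD_T^{\alpha_2}v=v-v(T)$ is right for smooth $v$, but the claim that ``the space $H^{\alpha_2}(I)$ is defined so that $v(T)=0$'' is not accurate for the standard fractional Sobolev space used in this paper. What actually rescues the argument is that for $0\le\alpha_2<1/2$ one has $H^{\alpha_2}(I)=H^{\alpha_2}_0(I)$, so $C_0^\infty(I)$ is dense and you may take the approximating sequence to vanish at $T$; the continuity of both sides in the $H^\alpha\times H^{\alpha_2}$ topology then passes the identity to the limit. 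For $\alpha_2\ge 1/2$ (which is possible here since only $\alpha_1+\alpha_2=\alpha<1$ is assumed) this density fails and the identity as written requires either $v(T)=0$ or an interpretation via Riemann--Liouville derivatives; in \cite{zhang1} the relevant test spaces carry the appropriate endpoint condition, and in this paper the lemma is only invoked with $\alpha_1=\alpha_2=\alpha/2<1/2$, where the issue does not arise. You should make this case distinction explicit rather than deferring to an unspecified ``convention.''
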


\begin{lem}[ \cite{samko1}]\label{fparts}
	If $\phi(x)\in L^p(I)$ and $\psi(x)\in L^q(I)$, $p,q\geq1$, we have
	\begin{eqnarray}\label{fpars}
		({}_{0}I_t^{\alpha}\phi,\psi)=(\phi,{}_{t}I_T^{\alpha}\psi),
	\end{eqnarray}
	where $1/p+1/q=1+\alpha$ $(p,q>1)$ or $1/p+1/q<1+\alpha$.
\end{lem}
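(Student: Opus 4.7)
The plan is to reduce the identity to an application of Fubini's theorem, the only real issue being the justification of switching the order of integration under the stated integrability hypotheses. Writing out both sides from the definitions, the left-hand side is
\begin{equation*}
({}_{0}I_t^{\alpha}\phi,\psi)=\frac{1}{\Gamma(\alpha)}\int_0^T\!\!\int_0^t\frac{\phi(s)\psi(t)}{(t-s)^{1-\alpha}}\,ds\,dt,
\end{equation*}
while the right-hand side, after expanding ${}_{t}I_T^{\alpha}\psi$, is the same double integral with the order of $s$ and $t$ swapped and with the domain of integration rewritten as $\{(s,t):0\le s\le t\le T\}$. So purely formally the two expressions coincide; the content of the lemma is that under the stated $L^p$-$L^q$ conditions the absolute value of the integrand is integrable over the triangle, which is exactly what is required to apply Fubini.

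First I would fix the triangle $\Delta=\{(s,t): 0\le s\le t\le T\}$ and consider
\begin{equation*}
J=\int_\Delta \frac{|\phi(s)||\psi(t)|}{(t-s)^{1-\alpha}}\,ds\,dt.
\end{equation*}
The strategy for showing $J<\infty$ depends on the regime. In the endpoint case $1/p+1/q=1+\alpha$ with $p,q>1$, I would invoke the Hardy–Littlewood–Sobolev inequality: the operator $\phi\mapsto {}_{0}I_t^{\alpha}\phi$ (extended to $\mathbb R$ by zero outside $I$) maps $L^p$ continuously into $L^{q'}$, where $1/q'=1/p-\alpha=1-1/q$, and then Hölder's inequality gives
\begin{equation*}
J = ({}_{0}I_t^\alpha|\phi|,|\psi|) \le \|{}_{0}I_t^\alpha|\phi|\|_{L^{q'}}\|\psi\|_{L^q} \le C\|\phi\|_{L^p}\|\psi\|_{L^q}<\infty.
\end{equation*}
In the sub-critical regime $1/p+1/q<1+\alpha$, the kernel $(t-s)^{\alpha-1}$ is more integrable on the bounded triangle, so the same Hölder estimate together with direct integration of the kernel works (one can even interpolate with the trivial bound on a bounded interval).

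Once $J<\infty$ is established, Fubini's theorem lets me interchange the order of integration in the signed integral defining $({}_{0}I_t^\alpha\phi,\psi)$, producing
\begin{equation*}
\frac{1}{\Gamma(\alpha)}\int_0^T \phi(s)\int_s^T \frac{\psi(t)}{(t-s)^{1-\alpha}}\,dt\,ds = (\phi,{}_{t}I_T^\alpha\psi),
\end{equation*}
which is precisely the claimed identity. The main obstacle, therefore, is not the algebra of the rearrangement but verifying integrability at the critical exponent $1/p+1/q=1+\alpha$; this is where Hardy–Littlewood–Sobolev (or, equivalently, a duality/weak-type argument) is essential, while the sub-critical case reduces to an elementary Hölder computation. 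Since the result is standard and quoted from Samko–Kilbas–Marichev, I would simply cite their Theorem on fractional integration by parts for the endpoint case and present the Fubini argument as above for completeness.
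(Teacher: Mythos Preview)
The paper does not supply a proof of this lemma at all: it is stated with a citation to Samko--Kilbas--Marichev and used as a black box. So there is no ``paper's own proof'' to compare against.

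Your argument is the standard one and is correct. The formal identity is exactly Fubini on the triangle $\{0\le s\le t\le T\}$, and the whole point of the $L^p$--$L^q$ hypotheses is to guarantee absolute integrability of $|\phi(s)||\psi(t)|(t-s)^{\alpha-1}$. Your use of Hardy--Littlewood--Sobolev at the critical exponent $1/p+1/q=1+\alpha$ (with $p,q>1$, which forces $1<p<1/\alpha$, so HLS applies) is the right tool, and this is precisely how the result is obtained in the cited reference. For the subcritical case your sketch is a bit terse; the cleanest way to fill it in on the bounded interval $I$ is either (i) use the embedding $L^p(I)\hookrightarrow L^{\tilde p}(I)$ for $\tilde p\le p$ to reduce to the critical case when $p,q>1$, or (ii) when $p=1$ (so $q>1/\alpha$), observe directly via H\"older that ${}_tI_T^\alpha|\psi|\in L^\infty(I)$ and pair with $|\phi|\in L^1(I)$. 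Either route gives $J<\infty$ and Fubini finishes the job.
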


\subsection*{\bf{Jacobi polynomials}}

For $\gamma,\beta>-1$, $n\in N$,  $x\in[-1,1]$,  $P_n^{\gamma,\beta}(x)$ is the classical Jacobi polynomial \cite{szego1} of degree $n$. Let $x=\frac{2t}{T}-1$, we transform the domain of the family of Jacobi polynomials $P_n^{\gamma,\beta}(x)$ to $[0,T]$ and introduce $$Q_{n}^{\gamma,\beta}(t)=P_n^{\gamma,\beta}(\frac{2t}{T}-1),\;t\in[0,T].$$

\begin{itemize*} 
	\item{\bf{Orthogonality.}}
	The Jacobi polynomials $P_n^{\gamma,\beta}$ are mutually orthogonal: for $\gamma,\beta>-1$,
	\begin{eqnarray}\label{jacobi}
		\int_{-1}^1(1-x)^{\gamma}(1+x)^{\beta}P_m^{\gamma,\beta}P_n^{\gamma,\beta}dx=\delta_{mn} |\|P_n^{\gamma,\beta}\||^2,
	\end{eqnarray}
	where $\delta_{mn}$ is the Kronecker function and
	\begin{eqnarray*}
		|\|P_n^{\gamma,\beta}\||^2
		=\frac{2^{\gamma+\beta+1}}{2n+\gamma+\beta+1}\frac{\Gamma(n+\gamma+1)\Gamma(n+\beta+1)}{\Gamma(n+1)\Gamma(n+ \gamma + \beta+1)}.
	\end{eqnarray*}

	\item{\bf{Fractional integral.}}
	For $\gamma\in R$, $\beta>-1$, the fractional integral of the weighted Jacobi polynomial \cite{askey1} is
	$${}_{-1}I_x^{\alpha}((1+x)^{\beta}P_n^{\gamma,\beta}(x))=\frac{\Gamma(n+\beta+1)}{\Gamma(n+\beta+\alpha+1)}(1+x)^{\beta+\alpha}P_n^{\gamma-\alpha,\beta+\alpha}(x),$$
	Hence,
	\begin{align}
		{}_{0}I_t^{\alpha}(t^{\beta}Q_n^{\gamma,\beta}(t))&=\frac{1}{\Gamma(\alpha)}\int_0^t\frac{s^\beta P_n^{\gamma,\beta}(\frac{2s}{T}-1)}{(t-s)^{1-\alpha}}ds\nonumber\\
		&\xlongequal[]{\text{Let $s=\frac{(r+1)T}{2}$}}\frac{1}{\Gamma(\alpha)}\int_{-1}^{\frac{2t}{T}-1}\frac{\frac{(r+1)^\beta T^\beta}{2^\beta}P_n^{\gamma,\beta}(r)}{[t-\frac{(r+1)T}{2}]^{1-\alpha}}\ \frac{T}{2}dr\nonumber\\
		&=\frac{1}{\Gamma(\alpha)}\int_{-1}^{\frac{2t}{T}-1}\frac{(r+1)^\beta P_n^{\gamma,\beta}(r)}{(\frac{2t}{T}-r-1)^{1-\alpha}}\ \frac{T^{\beta+\alpha}}{2^{\beta+\alpha}}dr\nonumber\\
		&\xlongequal[]{\text{Let $\frac{2t}{T}-1=x$}}\frac{1}{\Gamma(\alpha)}\int_{-1}^{x}\frac{(r+1)^\beta P_n^{\gamma,\beta}(r)}{(x-r)^{1-\alpha}}\ \frac{T^{\beta+\alpha}}{2^{\beta+\alpha}}dr\nonumber\\
		&={}_{-1}I_x^{\alpha}((1+x)^{\beta}P_n^{\gamma,\beta}(x))\ \frac{T^{\beta+\alpha}}{2^{\beta+\alpha}}\nonumber\\
		&=\frac{\Gamma(n+\beta+1)}{\Gamma(n+\beta+\alpha+1)}(1+x)^{\beta+\alpha}P_n^{\gamma-\alpha,\beta+\alpha}(x)\ \frac{T^{\beta+\alpha}}{2^{\beta+\alpha}}\nonumber\\
		&\xlongequal[]{\text{Let $x=\frac{2t}{T}-1$}}\frac{\Gamma(n+\beta+1)}{\Gamma(n+\beta+\alpha+1)}(1+\frac{2t}{T}-1)^{\beta+\alpha}P_n^{\gamma-\alpha,\beta+\alpha}(\frac{2t}{T}-1)\ \frac{T^{\beta+\alpha}}{2^{\beta+\alpha}}\nonumber\\
		&=\frac{\Gamma(n+\beta+1)}{\Gamma(n+\beta+\alpha+1)}t^{\beta+\alpha}Q_{n}^{\gamma-\alpha,\beta+\alpha}(t).\label{condition1}
	\end{align}
	
	\item{\bf{Fractional derivative.}}
	By performing the  operator ${}_{0}D_{t}^{\alpha}$ on both sides of the equation \eqref{condition1} and using the inverse property \eqref{inverse},  we have the fractional derivative of weighted Jacobi polynomials  with  $\gamma\in R$, $\beta-\alpha>-1$,
	\begin{eqnarray}\label{derivative}
		{}_{0}D_t^{\alpha}(t^{\beta}Q_n^{\gamma,\beta}(t))=\frac{\Gamma(n+\beta+1)}{\Gamma(n+\beta-\alpha+1)}t^{\beta-\alpha}Q_{n}^{\gamma+\alpha,\beta-\alpha}(t).
	\end{eqnarray}
	
	Similarly, we have, for $\gamma-\alpha>-1$, $\beta\in R$,
	\begin{eqnarray}\label{rderivative}
		{}_{t}D_T^{\alpha}((T-t)^{\gamma}Q_n^{\gamma,\beta}(t))=\frac{\Gamma(n+\gamma+1)}{\Gamma(n+\gamma-\alpha+1)}(T-t)^{\gamma-\alpha}Q_{n}^{\gamma-\alpha,\beta+\alpha}(t).
	\end{eqnarray}
	
\end{itemize*}

\subsection*{\bf{Fractional Sobolev space}}

Let $s>0$ be noninteger with $\nu=s-\lfloor s \rfloor>0$ being its noninteger part, where  $\lfloor s\rfloor$ is the integer part of $s$. Then the fractional Sobolev space \cite{adams1} is defined by
\begin{equation*}
	H^s(I):=\{v\in H^{\lfloor s\rfloor}(I): \int_I\int_I\frac{|D^{\lfloor s\rfloor}v(x)-D^{\lfloor s\rfloor}v(y)|^2}{|x-y|^{1+2\nu}}dxdy<\infty\}
\end{equation*}
endowed with the norm
\begin{equation*}
	\|v\|^2_{H^s}=\|v\|^2_{H^{\lfloor s\rfloor}}+\int_I\int_I\frac{|D^{\lfloor s\rfloor}v(x)-D^{\lfloor s\rfloor}v(y)|^2}{|x-y|^{1+2\nu}}dxdy.
\end{equation*}
For $s<0$ the space is defined by  $L^2$ duality.

Let ${}_0C^{\infty}(I)$ stands for the space of smooth functions with compact support in $(0,T]$, and for $s>0$, ${}_0H^s(I)$ denote the closure of ${}_0C^{\infty}(I)$ with respect to norm $\|\cdot\|_{H^s(I)}$.

\subsection*{{\bf{Weighted Sobolev spaces}}}
\begin{itemize*} 
	\item {\bf{ $L^2_{\omega^{\gamma,\beta}}(I)$.}} Let $\omega^{\gamma,\beta}(x)=(T-x)^{\gamma}x^{\beta}$, $\gamma,  \beta>-1$. Then
	\begin{equation*}
		L^2_{\omega^{\gamma,\beta}}(I):=\{v(x):\int_I\omega^{\gamma,\beta}(x)v^2(x)dx<\infty\}
	\end{equation*}
	with the inner product and norm defined by
	\begin{eqnarray*}
		(u,v)_{\omega^{\gamma,\beta}}=\int_{I}uv\omega^{\gamma,\beta}dx,\ \ \|u\|_{{\omega^{\gamma,\beta}}}=((u,u)_{\omega^{\gamma,\beta}})^{1/2}.
	\end{eqnarray*}
	When $\gamma=\beta=0$, we will drop $\omega$ from the above notations.
	\\~
	
	\item {\bf{Defined by interpolation.}} Following \cite{babu1} and \cite{guo1}, define the weighted Sobolev space, when $s$ is a nonnegative integer,
	\begin{equation*}
		H^s_{\omega^{\gamma,\beta}}(I)=\{v(x):D^kv(x)\in L^2_{\omega^{\gamma+k,\beta+k}}(I), k=0,1,\cdots,s\},
	\end{equation*}
	equipped with the norm
	\begin{equation*}
		\|v\|_{H^s_{\omega^{\gamma,\beta}}}=\left(\sum\limits_{k=0}^{s}|v|^2_{H^k_{\omega^{\gamma,\beta}}}\right)^{1/2},\ |v|_{H^k_{\omega^{\gamma,\beta}}}=\|D^kv\|_{{\omega^{\gamma+k,\beta+k}}}.
	\end{equation*}
	When $s\in R^{+}$ the space can be defined via the K-method \cite{adams1} of interpolation. For $s<0$ the space is defined by (weighted) $L^2$ duality.
	\\~

	\item {\bf{Equivalent norm in \bm{$H^s_{\omega^{\gamma,\beta}}(I)$}.}}
	For any $s$, the norm in $H^s_{\omega^{\gamma,\beta}}$ is equivalent to (see \cite{babu1})
	\begin{equation}\label{norm2}
		\|u\|_{H^s_{\omega^{\gamma,\beta}}}^{2}=\sum_{n=0}^{\infty}\left(u_{n}^{\gamma, \beta}\right)^{2}h_n^{\gamma,\beta}(1+n^2)^s
	\end{equation}
	where $\gamma, \beta>-1, u_{n}^{\gamma, \beta}=\frac{1}{h_{n}^{\gamma, \beta}} \int_{I} u(x) Q_{n}^{\gamma, \beta}(x) \omega^{\gamma, \beta} d x,$  $h_{n}^{\gamma, \beta}=\|Q_{n}^{\gamma,\beta}\|^2_{{\omega^{\gamma,\beta}}}$.
	Obviously, from \eqref{jacobi},
	\begin{eqnarray*}\label{jacobi1}
		\|Q_{n}^{\gamma,\beta}\|^2_{{\omega^{\gamma,\beta}}}=\int_0^T(T-t)^{\gamma}t^{\beta}Q_{n}^{\gamma,\beta}Q_{n}^{\gamma,\beta}dt=\left(\frac{T}{2}\right)^{\gamma+\beta+1}|\|P_n^{\gamma,\beta}\||^2.
	\end{eqnarray*}
	\\~
	
	\quad \quad Let $\mathbb{N}_0=\mathbb{N}\cup \{0\}$. In \cite{fdez1}, it is shown that for $s\geq0$, $s=\lfloor s\rfloor+\nu$, $0\leq\nu<1$ satisfied $s\neq1+\gamma$ if $\gamma\in(-1,0)$ and $s\neq1+\beta$ if $\beta\in(-1,0)$, the norm in $H^s_{\omega^{\gamma,\beta}}(I)$ is equivalent to
	\begin{eqnarray*}\begin{aligned}
			\|v\|^2_{H^s_{\omega^{\gamma,\beta}}}&=\begin{cases}\ \sum\limits_{k=0}^{\lfloor s\rfloor}\|D^kv\|^2_{{\omega^{\gamma+k,\beta+k}}},\ \text{for $s \in \mathbb{N}_0$},\\
				\ \sum\limits_{k=0}^{\lfloor s\rfloor}\|D^kv\|^2_{{\omega^{\gamma+k,\beta+k}}}+|v|^2_{H^s_{\omega^{\gamma,\beta}}},\ \text{for $s\in \mathbb{R}^{+}\backslash \mathbb{N}_0$}.
			\end{cases}\\
			|v|^2_{H^s_{\omega^{\gamma,\beta}}}&=\iint_{\Omega_{I,\zeta}}(T-x)^{\gamma+s}x^{\beta+s}\frac{|D^{\lfloor s\rfloor}v(x)-D^{\lfloor s\rfloor}v(y)|^2}{|x-y|^{1+2\nu}}dydx,
	\end{aligned}\end{eqnarray*}
	where for any $    \zeta    >1$ the domain $\Omega_{I,    \zeta    }$ is defined by
	\begin{equation*}
		\Omega_{I,    \zeta    }=\{(x,y)\in I\times I:     \zeta    ^{-1}(T-|2x-T|)<T-(2y-T)\mbox{sgn}(\frac{2x}{T}-1)<    \zeta    (T-|2x-T|)\}.
	\end{equation*}
	Taking $    \zeta    =3/2$, it follows that
	\begin{equation*}
		\Omega_{I,3/2}=\{(x,y): \frac{2}{3}x<y<\frac{3}{2}x,\ 0<x<\frac{T}{2}\}\cup\{(x,y):\frac{3}{2}x-\frac{T}{2}<y<\frac{2}{3}x+\frac{T}{3},\ \frac{T}{2}\leq x<T\}.
	\end{equation*}
	
	Whether a function $f$ lies in the space $H^s_{\omega^{\gamma,\beta}}(I)$  depends on its behavior at: (i) the left endpoint $(x=0)$, (ii) the right endpoint $(x=T)$, and (iii) away from the endpoints.
	In order to separate the consideration of the endpoint behaviors, following \cite{bern1}, we introduce the following function space $H_{(    \zeta    )}^s(J)$.
	\\~
	\item {\bf{\bm{$H_{(    \eta    )}^s(J)$}.} } Let $J=(0,\frac{3}{4}T)$, for $s\geq0$, $s=\lfloor s\rfloor+\nu$, $0\leq \nu <1$,
	\begin{equation*}
		H_{(\eta)}^s(J)=\{v(x):v(x)\ \mbox{is measurable and}\ \|v\|_{H_{(    \eta    )}^s(J)}<\infty\},
	\end{equation*}
	here the norm $\|\cdot\|_{H_{(    \eta    )}^s(J)}$ is defined by
	\begin{eqnarray*}\begin{aligned}
			\|v\|^2_{H^s_{(    \eta    )}(J)}&=\begin{cases}\ \sum\limits_{k=0}^{\lfloor s\rfloor}\|D^kv\|^2_{L^2_{(    \eta    +k)}(J)},\ \text{for $s \in \mathbb{N}_0$},\\
				\ \sum\limits_{k=0}^{\lfloor s\rfloor}\|D^kv\|^2_{L^2_{(    \eta    +k)}(J)}+|v|^2_{H^s_{(    \eta    )}(J)},\ \text{for $s\in \mathbb{R}^{+}\backslash \mathbb{N}_0$},
			\end{cases}
	\end{aligned}\end{eqnarray*}
	\begin{eqnarray*}\begin{aligned}
			\|v\|_{L^2_{(    \eta    )}(J) }^2:=\int_Jx^    \eta     v^2(x)dx,\ \
			|v|_{H^s_{(    \eta    )}(J) }^2&=\iint_{\Lambda^*}x^{    \eta    +s}\frac{|D^{\lfloor s\rfloor}v(x)-D^{\lfloor s\rfloor}v(y)|^2}{|x-y|^{1+2\nu}}dydx
	\end{aligned}\end{eqnarray*}
	where
	\begin{equation*}\begin{aligned}
			\Lambda^{*}&=\{(x,y): \frac{2}{3}x<y<\frac{3}{2}x,\ 0<x<\frac{T}{2}\}\cup\{(x,y):\frac{3}{2}x-\frac{T}{2}<y<\frac{2}{3}x+\frac{T}{3},\ \frac{T}{2}\leq x<\frac{3}{4}T\}.
	\end{aligned}\end{equation*}
	By the definition of $H^s_{\omega^{\gamma,\beta}}(I)$ and $ H^s_{(\zeta)}(J)$, the following result can be readily obtained.
	\begin{lem}\label{lem:2}
		A function $v\in H^s_{\omega^{\gamma,\beta}}(I)$ if and only if $v\in H^s_{(\beta)}(J)$ and $\widehat{v}\in H^s_{(\gamma)}(J)$, where  $\widehat{v}(x):=v(T-x)$.
	\end{lem}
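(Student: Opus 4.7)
The plan is to establish the norm equivalence
\begin{equation*}
\|v\|^2_{H^s_{\omega^{\gamma,\beta}}(I)} \;\asymp\; \|v\|^2_{H^s_{(\beta)}(J)} + \|\widehat{v}\|^2_{H^s_{(\gamma)}(J)},
\end{equation*}
which is stronger than, and immediately implies, the stated iff. The strategy is a domain decomposition: set $\widetilde{J}:=T-J=(T/4,T)$, so $I=J\cup\widetilde{J}$ with overlap $(T/4,3T/4)$. The crucial observation is that on $J$ the factor $(T-x)^{\gamma+k}$ (or $(T-x)^{\gamma+s}$) is bounded above and below by positive constants depending only on $T,\gamma,s,k$, since $T-x\in[T/4,T]$; symmetrically, on $\widetilde{J}$ the factor $x^{\beta+k}$ is equivalent to a positive constant. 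Thus the two-sided weight $(T-x)^{\gamma+k}x^{\beta+k}$ collapses on each half to exactly the one-sided weight that $H^s_{(\beta)}(J)$ or $H^s_{(\gamma)}(J)$ sees, after a reflection in the latter case.

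For the integer-order pieces of the norm I would split $\int_0^T=\int_0^{3T/4}+\int_{T/4}^T-\int_{T/4}^{3T/4}$, apply the weight equivalence on each one-sided integral, and dominate the middle overlap by either piece. The change of variables $t=T-x$ together with $D^k\widehat{v}(x)=(-1)^k D^k v(T-x)$ converts $\int_{T/4}^T (T-x)^{\gamma+k}|D^k v|^2\,dx$ into $\|\widehat{v}\|^2_{L^2_{(\gamma+k)}(J)}$, giving the second term on the right-hand side above.

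For the fractional seminorm (when $s\notin\mathbb{N}_0$) I would decompose $\Omega_{I,3/2}=L_1\cup R_1$ with $L_1=\Omega_{I,3/2}\cap\{0<x<T/2\}$ and $R_1=\Omega_{I,3/2}\cap\{T/2\le x<T\}$. By the excerpt's definition, $\Lambda^*=L_1\cup R_2$ with $R_2=R_1\cap\{x<3T/4\}$. A direct computation shows that the reflection $(x,y)\mapsto(T-x,T-y)$ carries $\Lambda^*$ onto $R_1\cup L_2$, where $L_2=L_1\cap\{x>T/4\}$; this map is volume-preserving, preserves $|x-y|$, and squares away the sign $(-1)^{\lfloor s\rfloor}$ coming from derivatives of $\widehat{v}$, so the difference quotient $F_v(x,y):=|D^{\lfloor s\rfloor}v(x)-D^{\lfloor s\rfloor}v(y)|^2/|x-y|^{1+2\nu}$ is preserved. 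Applying the weight collapse $(T-x)^{\gamma+s}x^{\beta+s}\asymp x^{\beta+s}$ on $L_1\cup R_2\subset\{x<3T/4\}$ and $(T-x)^{\gamma+s}x^{\beta+s}\asymp(T-x)^{\gamma+s}$ on $R_1\cup L_2\subset\{x>T/4\}$, together with the facts $(L_1\cup R_2)\cup(R_1\cup L_2)=\Omega_{I,3/2}$ and $L_2\cup R_2\subset\Omega_{I,3/2}$, yields the required seminorm equivalence.

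The main obstacle is the bookkeeping of the piecewise decomposition — specifically, verifying that the reflection sends $\Lambda^*$ exactly onto $R_1\cup L_2$ in $I$-coordinates, and keeping the equivalence constants uniform across all admissible $\gamma,\beta>-1$ and $s$ in the regime handled by \cite{fdez1} (i.e., excluding the exceptional values $s=1+\gamma$ or $s=1+\beta$). Once these geometric and weight comparisons are in hand, summing the integer-order and fractional contributions and combining both inequalities gives the asserted iff.
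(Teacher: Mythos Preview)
Your argument is correct and, for the integer-order part of the norm, essentially identical to the paper's: both use the overlap decomposition $I=J\cup(T-J)$, the observation that the ``far'' weight factor is bounded above and below on each half, and the change of variables $t=T-x$ to identify the right-endpoint contribution with $\|\widehat v\|_{H^s_{(\gamma)}(J)}$.

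The genuine difference is in the non-integer case. The paper does not touch the fractional seminorm directly; having established the norm equivalence for $s\in\mathbb N_0$, it simply invokes real interpolation (the $H^s_{\omega^{\gamma,\beta}}$ spaces are defined by the $K$-method for non-integer $s$) to extend the equivalence to all $s\in\mathbb R^+$. Your route instead works pointwise with the explicit seminorm from \cite{fdez1}: you decompose $\Omega_{I,3/2}=L_1\cup R_1$, verify that the reflection $(x,y)\mapsto(T-x,T-y)$ carries $\Lambda^*=L_1\cup R_2$ onto $R_1\cup L_2$, and then collapse the weight on each piece. This geometric verification is correct (indeed the reflection sends $L_1$ to $R_1$ and $R_2$ to $L_2$, and $F_v$ is preserved because the $(-1)^{\lfloor s\rfloor}$ is squared away). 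Your approach is more self-contained and avoids the black box of interpolation theory, at the cost of some bookkeeping with the integration domains; the paper's approach is shorter but relies on the interpolation machinery already set up in \cite{adams1,babu1}. One minor point: you need not worry about uniformity of the constants in $\gamma,\beta$ --- the lemma only asserts the iff, and constants depending on $\gamma,\beta,s,T$ are perfectly acceptable.
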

	\begin{proof} For $s\in N_0$, $v\in H^s_{\omega^{\gamma,\beta}}(I)$ is equivalent to $\|v\|_{H^s_{\omega^{\gamma,\beta}}}<+\infty$, i.e.
		\begin{align}
			\|v\|_{H^s_{\omega^{\gamma,\beta}}}^2= \sum\limits_{k=0}^{ s}\|D^kv\|^2_{{\omega^{\gamma+k,\beta+k}}}= \sum\limits_{k=0}^{ s}\int_0^{T}\omega^{\gamma+k,\beta+k}(D^kv)^2dx<\infty.\nonumber
		\end{align}
		For  $\widehat{v}(x)=v(T-x)$, we can write
		\begin{align}
			\|D^k\widehat{v}\|^2_{L^2_{(\gamma+k)}(J)}=\int_{0}^{\frac{3}{4}T}t^{\gamma+k}(D^k v(T-t))^2dt=\int_{\frac{1}{4}T}^{T}(T-x)^{\gamma+k}(D^kv)^2dx.\nonumber
		\end{align}
		If $v\in H^s_{\omega^{\gamma,\beta}}(I)$, noting that $\omega^{\gamma+k,0}\in C^{\infty}[0,\frac{3}{4}T]$, $\omega^{0,\beta+k} \in C^{\infty}[\frac{1}{4}T,T]$ and the definition of $\|\cdot\|_{L_{(    \eta    )}^2 (J)}$, we can obtain
		\begin{align*}
			\|D^kv\|^2_{{\omega^{\gamma+k,\beta+k}}}&=\int_0^{T}\omega^{\gamma+k,\beta+k}(D^kv)^2dx\geq \int_{\frac{1}{4}T}^T\omega^{\gamma+k,\beta+k}(D^kv)^2dx \nonumber\\
			&\geq C\int_{\frac{1}{4}T}^{T}(T-x)^{\gamma+k}(D^kv)^2dx= C\|D^k\widehat{v}\|^2_{ L^2_{(\gamma+k)}(J)},\label{equivalent2}
		\end{align*}
		and similarly
		\begin{equation*}
			\|D^kv\|^2_{{\omega^{\gamma+k,\beta+k}}}\geq\int_0^{\frac{3}{4}T}\omega^{\gamma+k,\beta+k}(D^kv)^2dx\geq C\int_0^{\frac{3}{4}T}x^{\beta+k}(D^kv)^2dx= C\|D^kv\|^2_{ L^2_{(\beta+k)}(J)}.
		\end{equation*}
		Thus $v\in H^s_{(\beta)}(J)$ and $\widehat{v}\in H^s_{(\gamma)}(J)$ by the definition of  $ H^s_{(\eta)}(J)$. On the other hand,
		\begin{align*}
			\|D^kv\|^2_{{\omega^{\gamma+k,\beta+k}}}&\leq\int_0^{\frac{3}{4}T}\omega^{\gamma+k,\beta+k}(D^kv)^2dx+\int_{\frac{1}{4}T}^{T}\omega^{\gamma+k,\beta+k}(D^kv)^2dx\nonumber\\
			&\leq C(\|D^kv\|^2_{L^2_{(\beta+k)}(J)}+\|D^k\widehat{v}\|^2_{ L^2_{(\gamma+k)}(J)})\label{equivalent3}.
		\end{align*}
		The conclusion is proved when $s$ are nonnegative integers.
		For $s\in R^{+}$, it can be obtained by the space interpolation theory \cite{adams1}.
	\end{proof}
\end{itemize*}

\section{Regularity}\label{section3}
In this section, we investigate the regularity of the solution to \eqref{FIVP} in weighted Sobolev space. Throughout the paper, $C$ and $c$ denote  generic constants independent of the truncation number $N$.

The weak formulation of the problem \eqref{FIVP} is: give $f\in H^{-\frac{\alpha}{2}}(I)$, to find $u\in {}_0H^{\frac{\alpha}{2}}(I)$ such that
\begin{eqnarray}\label{weak}
	a(u,v):=\left({}_{0}D_t^{\frac{\alpha}{2}}u,{}_{t}D_T^{\frac{\alpha}{2}}v \right)+\lambda(u,v)=\langle f,v\rangle,\ \forall\, v\in {}_0H^{\frac{\alpha}{2}}(I).
\end{eqnarray}

The well-posedness of the problem \eqref{weak} can be established  refer to \cite{ervin1}. As no new techniques are used here, we omit the proof and give the result as follows.
\begin{thm}\label{T1}
	For all $0<\alpha<1$ and $f\in H^{-\frac{\alpha}{2}}(I)$, the problem \eqref{weak} exists a unique solution $u\in {}_0H^{\frac{\alpha}{2}}(I)$ such that $$\|u\|_{H^{\frac{\alpha}{2}} }\leq C\|f\|_{H^{-\frac{\alpha}{2}} }.$$
\end{thm}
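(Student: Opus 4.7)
The plan is to obtain Theorem \ref{T1} by a direct application of the Lax--Milgram theorem on the Hilbert space ${}_0H^{\alpha/2}(I)$, treating $\lambda\ge 0$ as a harmless lower-order perturbation of the fractional principal part. First I would record that the right-hand side $v\mapsto \langle f,v\rangle$ is a continuous linear functional on ${}_0H^{\alpha/2}(I)$, with norm bounded by $\|f\|_{H^{-\alpha/2}}$, by definition of the dual space.

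Next I would verify continuity of $a(\cdot,\cdot)$. Using Cauchy--Schwarz on the first term gives
\begin{equation*}
\bigl|({}_0D_t^{\alpha/2}u,{}_tD_T^{\alpha/2}v)\bigr|\le \|{}_0D_t^{\alpha/2}u\|_{L^2}\,\|{}_tD_T^{\alpha/2}v\|_{L^2}\le C\|u\|_{H^{\alpha/2}}\|v\|_{H^{\alpha/2}},
\end{equation*}
where the last inequality follows from the mapping properties of the Caputo/Riemann--Liouville operators of order $\alpha/2$ from $H^{\alpha/2}$ into $L^2$ (a standard fact in the fractional-Sobolev setting, and consistent with Lemma \ref{parts} with $\alpha_1=\alpha_2=\alpha/2$). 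For the reaction term, $|\lambda(u,v)|\le \lambda\|u\|_{L^2}\|v\|_{L^2}\le \lambda\|u\|_{H^{\alpha/2}}\|v\|_{H^{\alpha/2}}$. Adding these yields $|a(u,v)|\le C\|u\|_{H^{\alpha/2}}\|v\|_{H^{\alpha/2}}$.

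The decisive step, and the one I expect to be the main obstacle, is coercivity. The key inequality to invoke is the Ervin--Roop type estimate
\begin{equation*}
({}_0D_t^{\alpha/2}u,{}_tD_T^{\alpha/2}u)\ge c\,\|u\|_{H^{\alpha/2}}^2\qquad \forall\,u\in {}_0H^{\alpha/2}(I),
\end{equation*}
which is proved by extending $u$ by zero outside $I$, passing to the Fourier side, and using $\cos(\pi\alpha/2)>0$ together with equivalence between the $H^{\alpha/2}$-norm and the $|\xi|^{\alpha/2}$-weighted $L^2$-norm of the extension on ${}_0H^{\alpha/2}(I)$. Since $\lambda\ge 0$, the reaction term contributes $\lambda\|u\|_{L^2}^2\ge 0$, so
\begin{equation*}
a(u,u)\ge c\|u\|_{H^{\alpha/2}}^2,\qquad u\in {}_0H^{\alpha/2}(I).
\end{equation*}
This is the same fractional coercivity used in Ervin--Roop, so I would cite \cite{ervin1} here rather than redo the Fourier argument.

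Finally, with continuity and coercivity in hand, Lax--Milgram delivers a unique $u\in {}_0H^{\alpha/2}(I)$ satisfying \eqref{weak}, and the a priori bound
\begin{equation*}
\|u\|_{H^{\alpha/2}}\le \tfrac{1}{c}\sup_{v\in {}_0H^{\alpha/2}(I)\setminus\{0\}}\frac{|a(u,v)|}{\|v\|_{H^{\alpha/2}}}=\tfrac{1}{c}\sup_{v}\frac{|\langle f,v\rangle|}{\|v\|_{H^{\alpha/2}}}\le C\|f\|_{H^{-\alpha/2}},
\end{equation*}
which is the claimed stability estimate. No new machinery beyond what is already available in \cite{ervin1} and Lemma \ref{parts} is needed; the only subtlety is making sure the $\lambda u$ term does not spoil coercivity, which is immediate from $\lambda\ge 0$.
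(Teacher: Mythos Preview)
Your proposal is correct and matches the paper's own treatment: the paper does not write out a proof but simply states that the well-posedness follows from \cite{ervin1} with no new techniques, which is precisely the Lax--Milgram argument (continuity plus the Ervin--Roop fractional coercivity, with the $\lambda\ge 0$ term only helping) that you have sketched. If anything, you have supplied more detail than the paper itself.
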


The regularity in weighted Sobolev space has been proved for the solution of initial value problem ${}_{-1}D_x^{\alpha}u=f(x),\ x\in (-1,1)$ in \cite{zhang1}. By using variable substitution, we can obtain the regularity results for the problem \eqref{FIVP} with $\lambda=0$ directly.

\begin{thm}\label{T2}
	For the problem \eqref{FIVP} with $\lambda=0$, assume that $\gamma>\alpha-1$. If $\omega^{0,\alpha-\beta}f\in H^r_{\omega^{\gamma,\beta-\alpha}}(I)$ $(r\geq0)$ with $\beta>1-\alpha$ or $\omega^{0,\alpha-\beta}f\in H^r_{\omega^{\gamma,\beta-\alpha}}(I)\cap C\left(\bar{I}\right)$ $(r\geq0)$, then $\omega^{0,-\beta}u\in H^{r+\alpha}_{\omega^{\gamma-\alpha,\beta}}(I)$.
\end{thm}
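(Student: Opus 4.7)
The plan is to reduce Theorem \ref{T2} to the analogous regularity result for the problem ${}_{-1}D_x^{\alpha}\tilde u=\tilde F$ on $(-1,1)$ already proved in \cite{zhang1}, by means of the affine change of variable $x=\frac{2t}{T}-1$ that sends $[0,T]$ bijectively onto $[-1,1]$. Setting $\tilde u(x):=u\!\left(\frac{T(x+1)}{2}\right)$ and $\tilde f(x):=f\!\left(\frac{T(x+1)}{2}\right)$, a direct computation of the same flavour as the one carried out in \eqref{condition1} gives
\[
{}_{0}D_t^{\alpha}u(t)=\left(\frac{2}{T}\right)^{\alpha}{}_{-1}D_x^{\alpha}\tilde u(x),
\]
so the FIVP \eqref{FIVP} with $\lambda=0$ transfers to ${}_{-1}D_x^{\alpha}\tilde u=\left(\frac{T}{2}\right)^{\alpha}\tilde f$ on $(-1,1)$ with $\tilde u(-1)=0$.

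The next step is to verify that the weighted Sobolev norms on $I$ and on $(-1,1)$ agree up to a constant depending only on $T$. Since $T-t=\frac{T}{2}(1-x)$ and $t=\frac{T}{2}(1+x)$, one has $\omega^{\gamma,\beta}(t)=\left(\frac{T}{2}\right)^{\gamma+\beta}(1-x)^{\gamma}(1+x)^{\beta}$, and each derivative picks up a factor $2/T$. For integer $s$ the equivalence of norms is then immediate, and for general real $s\geq 0$ it follows from the spectral characterisation \eqref{norm2}, since the shifted Jacobi polynomials $Q_n^{\gamma,\beta}$ are exactly the pullbacks of $P_n^{\gamma,\beta}$ and the $h_n^{\gamma,\beta}$ rescale by the same power of $T/2$. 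In particular the hypothesis $\omega^{0,\alpha-\beta}f\in H^r_{\omega^{\gamma,\beta-\alpha}}(I)$ transfers to $(1+x)^{\alpha-\beta}\tilde f\in H^r_{(1-x)^{\gamma}(1+x)^{\beta-\alpha}}(-1,1)$, and the side condition $\beta>1-\alpha$ (or continuity of $\omega^{0,\alpha-\beta}f$ up to $\bar I$) is preserved verbatim under the pullback.

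With the problem and all function-space hypotheses transported to $(-1,1)$, the regularity result of \cite{zhang1} applies to the transformed problem and yields $(1+x)^{-\beta}\tilde u\in H^{r+\alpha}_{(1-x)^{\gamma-\alpha}(1+x)^{\beta}}(-1,1)$, the admissibility condition $\gamma-\alpha>-1$ being exactly the assumption $\gamma>\alpha-1$ of the theorem. Pulling this estimate back through $x=\frac{2t}{T}-1$ using the weight/norm equivalence established above, we obtain $\omega^{0,-\beta}u\in H^{r+\alpha}_{\omega^{\gamma-\alpha,\beta}}(I)$, which is the conclusion.

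The only delicate point is ensuring that the norm equivalence really holds for all real $s$, since the definition of $H^s_{\omega^{\gamma,\beta}}$ via the K-method is not manifestly invariant under affine maps; the cleanest route, as above, is to bypass interpolation by appealing to the spectral norm \eqref{norm2}, where the transformation acts diagonally on Jacobi coefficients. Everything else amounts to bookkeeping of constants of the form $(T/2)^{\alpha}$ and $(T/2)^{\gamma+\beta+1}$.
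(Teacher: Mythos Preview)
Your proposal is correct and follows exactly the approach indicated in the paper: the authors state that Theorem~\ref{T2} is obtained ``by using variable substitution'' from the corresponding result on $(-1,1)$ in \cite{zhang1}, and your affine change of variable $x=\frac{2t}{T}-1$ together with the bookkeeping of weights and norms is precisely that substitution carried out in detail. The paper gives no further argument, so there is nothing to compare beyond the fact that you have supplied the computations the paper omits.
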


\begin{lem}[\cite{bern2}]\label{Bern2} For any $r$ such that $1<r<+\infty$ and for any real number $\beta<-1$ the following inequalities hold:  $\forall\phi \in C_0^{\infty}(0,1]$,
	\begin{eqnarray}\label{Hardy}\begin{aligned}
			\int_0^1|\phi(x)|^rx^{\beta}dx\leq \left(\frac{r}{|\beta+1|}\right)^r\int_0^1|\phi'(x)|^rx^{\beta+r}dx.
	\end{aligned}\end{eqnarray}
\end{lem}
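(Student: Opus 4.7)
The plan is to prove this via the classical two-step Hardy-inequality argument: integration by parts against $x^{\beta}$ (regarded as a derivative), followed by H\"older's inequality. Two features of the hypotheses make this work cleanly. First, the condition $\beta<-1$ gives $\beta+1<0$, so the boundary term at $x=1$ produced by the integration by parts has a sign that can be discarded. Second, $\phi\in C_0^{\infty}(0,1]$ vanishes in a neighborhood of $0$, which both ensures that the integral on the left-hand side is finite (despite the singular weight) and kills the boundary term at $x=0$.

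For the first step, I would write $x^{\beta}=\frac{d}{dx}\bigl(x^{\beta+1}/(\beta+1)\bigr)$ and integrate by parts against $|\phi|^r$, using $\frac{d}{dx}|\phi|^r = r|\phi|^{r-1}\,\mathrm{sgn}(\phi)\,\phi'$ wherever $\phi\neq 0$. This yields
\begin{equation*}
\int_0^1 |\phi(x)|^r x^{\beta}\,dx \;=\; \frac{|\phi(1)|^r}{\beta+1} \;-\; \frac{r}{\beta+1}\int_0^1 |\phi|^{r-1}\,\mathrm{sgn}(\phi)\,\phi'\, x^{\beta+1}\,dx.
\end{equation*}
Since $1/(\beta+1)<0$, the boundary term is nonpositive and can be dropped, giving
\begin{equation*}
\int_0^1 |\phi|^r x^{\beta}\,dx \;\leq\; \frac{r}{|\beta+1|}\int_0^1 |\phi|^{r-1}|\phi'|\, x^{\beta+1}\,dx.
\end{equation*}

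For the second step, I would apply H\"older's inequality with conjugate exponents $r/(r-1)$ and $r$, splitting the weight as $x^{\beta+1} = x^{\beta(r-1)/r}\cdot x^{1+\beta/r}$ so that after raising the two factors to their respective exponents one recovers $x^{\beta}$ against $|\phi|^r$ and $x^{\beta+r}$ against $|\phi'|^r$:
\begin{equation*}
\int_0^1 |\phi|^{r-1}|\phi'|\, x^{\beta+1}\,dx \;\leq\; \Bigl(\int_0^1 |\phi|^r x^{\beta}\,dx\Bigr)^{(r-1)/r}\Bigl(\int_0^1 |\phi'|^r x^{\beta+r}\,dx\Bigr)^{1/r}.
\end{equation*}
Writing $A$ for the integral on the left-hand side of the desired inequality (finite by the support condition) and dividing both sides by $A^{(r-1)/r}$, then raising to the $r$-th power, produces exactly the claimed bound with constant $(r/|\beta+1|)^r$.

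The proof is essentially routine, and there is no serious obstacle. The only subtle points are justifying the chain rule on $|\phi|^r$ and verifying the arithmetic of the H\"older split, both of which are direct. If one wishes to avoid the $\mathrm{sgn}(\phi)$ bookkeeping, an equivalent route is to first prove the inequality for nonnegative $\phi$ by the same two steps and then observe that replacing $\phi$ by $|\phi|$ does not increase the right-hand side (since $|(|\phi|)'|\leq |\phi'|$ almost everywhere on $\{\phi\neq 0\}$). Either presentation gives the sharp Hardy constant $r/|\beta+1|$.
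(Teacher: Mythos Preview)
Your proof is correct and is the standard argument for this weighted Hardy inequality. Note, however, that the paper does not actually supply a proof of this lemma: it is quoted from \cite{bern2} (Bernardi--Dauge--Maday) and used as a black box, so there is no ``paper's own proof'' to compare against. What you have written is essentially the proof one finds in that reference, namely integration by parts against $x^{\beta+1}/(\beta+1)$ followed by H\"older with exponents $r/(r-1)$ and $r$, exploiting $\beta+1<0$ to discard the boundary contribution at $x=1$ and the compact support in $(0,1]$ to kill the one at $x=0$.

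Two small remarks. First, your justification of the chain rule for $|\phi|^r$ is fine: since $r>1$ and $\phi$ is smooth, $|\phi|^r\in C^1$ with derivative $r|\phi|^{r-1}\mathrm{sgn}(\phi)\phi'$ extended by $0$ on $\{\phi=0\}$, so integration by parts is legitimate. Second, in the final step you should note explicitly that $A<\infty$ (which you do mention, via the support condition) and handle the trivial case $A=0$ separately before dividing; otherwise the division by $A^{(r-1)/r}$ is formally undefined. With those minor clarifications the argument is complete.
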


In \cite{ervin2}, the authors give the restrictions of $q$ and $\sigma$ to ensure $x^p\phi(x)\in H^{q}_{(\sigma)}(J)$ for $\phi(x)\in H^{s}_{(\mu)}(J)$ with $J=(0,\frac{3}{4})$. Here for $J=(0,\frac{3T}{4})$ the following results can be obtained similarly.

\begin{lem}\label{ervin}
	Let $n\leq s<n+1$, $n\in \mathbb{N}_0$, $p\geq0$,  $\mu>-1$, and $\psi\in H^{s}_{(\mu)}(J)$. If
	\begin{equation*}
		0\leq q\leq s,\ \sigma+2p\geq\mu,\ \sigma+2p-q>-1,\ \sigma+2p+q\geq\mu+s,
	\end{equation*}
	then $x^p\psi\in H^{q}_{(\sigma)}(J)$. Moreover, there exists a positive constant $C$ independent of $\psi$, such that
	\begin{equation}\label{condition}
		\|x^p\psi\|_{H^{q}_{(\sigma)}(J)}\leq C\|\psi\|_{H^{s}_{(\mu)}(J)}.
	\end{equation}
\end{lem}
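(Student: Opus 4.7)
The plan is to expand $x^{p}\psi$ using the Leibniz rule, reduce the claim to weighted $L^{2}$ bounds on each resulting summand, and then handle each summand either by a pointwise weight comparison on the bounded interval $J$ or by iterating Hardy's inequality (Lemma \ref{Bern2}) to trade negative powers of $x$ for extra derivatives on $\psi$. Fractional $q$ is handled by an analogous decomposition applied to the Gagliardo seminorm.

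\textbf{Integer part.} For $k=0,1,\ldots,\lfloor q\rfloor$, the Leibniz rule yields
\begin{equation*}
D^{k}(x^{p}\psi)=\sum_{j=0}^{k}\binom{k}{j}\frac{\Gamma(p+1)}{\Gamma(p-j+1)}\,x^{p-j}\,D^{k-j}\psi,
\end{equation*}
(with vanishing coefficients for $j>p$ when $p\in\mathbb{N}_{0}$), so that the bound on $\|D^{k}(x^{p}\psi)\|^{2}_{L^{2}_{(\sigma+k)}(J)}$ reduces to estimating, for each $0\leq j\leq k$,
\begin{equation*}
I_{k,j}=\int_{J}x^{\sigma+k+2(p-j)}\,(D^{k-j}\psi)^{2}\,dx.
\end{equation*}
When $\sigma+k+2(p-j)\geq\mu+(k-j)$, i.e.\ $\sigma+2p\geq\mu+j$, one has $I_{k,j}\leq C\|D^{k-j}\psi\|^{2}_{L^{2}_{(\mu+k-j)}(J)}\leq C\|\psi\|^{2}_{H^{s}_{(\mu)}(J)}$. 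When this pointwise comparison fails, I would iterate Hardy's inequality in the form $\int_{0}^{3T/4}x^{a}(D^{m}\psi)^{2}dx\leq C(a)\int_{0}^{3T/4}x^{a+2}(D^{m+1}\psi)^{2}dx$, valid for $a<-1$, each step trading two units of the weight exponent for one extra derivative. The three hypotheses should match the three regimes: $\sigma+2p\geq\mu$ covers the baseline $j=0$ term, $\sigma+2p+q\geq\mu+s$ covers the worst-case top-order term at $k=\lfloor q\rfloor$, $j=k$, and $\sigma+2p-q>-1$ ensures integrability of the weight $x^{\sigma+2p-q}$ appearing in the fractional-seminorm estimate below.

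\textbf{Fractional part.} For $q=\lfloor q\rfloor+\nu$ with $0<\nu<1$, I would bound $|x^{p}\psi|_{H^{q}_{(\sigma)}(J)}^{2}$ by expanding $D^{\lfloor q\rfloor}(x^{p}\psi)$ via Leibniz as above and, for each summand $x^{p-j}D^{\lfloor q\rfloor-j}\psi$, splitting the difference as
\begin{equation*}
x^{p-j}D^{\lfloor q\rfloor-j}\psi(x)-y^{p-j}D^{\lfloor q\rfloor-j}\psi(y)=(x^{p-j}-y^{p-j})D^{\lfloor q\rfloor-j}\psi(x)+y^{p-j}\bigl(D^{\lfloor q\rfloor-j}\psi(x)-D^{\lfloor q\rfloor-j}\psi(y)\bigr).
\end{equation*}
Since $x$ and $y$ are comparable on $\Lambda^{*}$, the mean-value bound $|x^{p-j}-y^{p-j}|\leq C|x-y|\max(x,y)^{p-j-1}$ reduces the first piece to an integer-order weighted $L^{2}$ integral already controlled in the previous step, while the second piece produces a weighted Gagliardo seminorm of $D^{\lfloor q\rfloor-j}\psi$ that is bounded by $\|\psi\|_{H^{s}_{(\mu)}(J)}$ (directly when $s$ is fractional, and by interpolation between $H^{\lfloor s\rfloor}_{(\mu)}$ and $H^{\lfloor s\rfloor+1}_{(\mu)}$ when $s$ is integer).

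\textbf{Main obstacle.} The delicate point will be the fractional seminorm estimate when $p-j<0$ and $k-j$ is near the top. Keeping $(x^{p-j}-y^{p-j})D^{\lfloor q\rfloor-j}\psi(x)$ integrable over $\Lambda^{*}$ demands a sharp balancing of $\sigma+2p-q>-1$ (integrability near $0$) against $\sigma+2p+q\geq\mu+s$ (regularity matching at the top derivative), and verifying that \emph{every} admissible pair $(k,j)$ is covered by some feasible Hardy iteration terminating at derivative order $\leq\lfloor s\rfloor$ is the bookkeeping that will take the most care.
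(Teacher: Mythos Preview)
Your proposal follows the same route as the cited source: the paper does not prove this lemma from scratch but quotes it as Theorem~6.4 of \cite{ervin2}, adding only the subsequent Remark to explain why the extra hypothesis $p\geq n$ appearing there can be dropped. Your Leibniz expansion with weight comparison/Hardy for the integer terms and difference-splitting for the Gagliardo seminorm is exactly Ervin's argument.

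One point of contact worth noting: where you invoke the mean-value bound $|x^{p-j}-y^{p-j}|\le C|x-y|\max(x,y)^{p-j-1}$, the paper's Remark (following Ervin) instead substitutes $y=zx$ and reduces to the one-variable integral
\[
\int_{2/3}^{3/2}|1-z|^{-1-2(q-\lfloor q\rfloor)}\,|z^{p-j}-1|^{2}\,dz,
\]
then applies Hardy (Lemma~\ref{Bern2}) with $\phi(z)=z^{p-j}-1$, which vanishes at $z=1$. This is precisely the step where \cite{ervin2} had imposed $p\geq n$ (to keep $p-j\geq0$), and the paper's only contribution to the proof is the observation that Lemma~\ref{Bern2} needs no such sign restriction. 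Your mean-value estimate achieves the same end, so your ``main obstacle'' is in fact already handled.

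A small caution: your use of Hardy on $D^{m}\psi$ itself in the integer part tacitly requires $D^{m}\psi$ to vanish at $0$, which is not automatic from $\psi\in H^{s}_{(\mu)}(J)$; in Ervin's treatment the integer terms are controlled by direct weight comparison (the hypotheses are arranged so this suffices) rather than by Hardy on $\psi$. This is a bookkeeping detail rather than a structural flaw in your plan.
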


\begin{rem}
	Except for the different regions considered, the only difference between Lemma \ref{ervin} and the Theorem 6.4 in \cite{ervin2} is the use of an additional condition $p\geq n$ in the latter. It is unreasonable as $p$ is generally a small number. Hence,  it is suffice to state that the condition $p\geq n$ can be removed.
	
	Actually, for $n\leq s<n+1$, the condition $p\geq n$ is presented in the Theorem 6.4 in \cite{ervin2} to bound the integral by using Hardy's inequality \eqref{Hardy}:
	\begin{align*}
		&\int_{2/3}^{3/2}|1-z|^{-1-2(q-n)}|z^{p-j}-1|^2dz\\
		&=\int_{2/3}^{1}(1-z)^{-1-2(q-n)}(z^{p-j}-1)^2dz+\int_1^{3/2}(z-1)^{-1-2(q-n)}(z^{p-j}-1)^2dz\\
		&\leq C\int_{2/3}^{1}(1-z)^{-1-2(q-n)+2}(z^{p-j-1})^2dz+\int_1^{3/2}(z-1)^{-1-2(q-n)+2}(z^{p-j-1})^2dz\\
		&\leq C,\ \mbox{provided} \ q<n+1.
	\end{align*}
	However, from Lemma \ref{Bern2} we find that there is no need to require $p\geq j$, $j=0,1,\cdots, n$, when using \eqref{Hardy}.
\end{rem}
\begin{lem}[\cite{ervin2}]\label{Ervinadd}
	Let $s\geq0$, $\phi\in H_{(\gamma)}^s(J)$, and $g\in C^{\lceil s\rceil}(J)$. Then
	\begin{equation}\label{6.2}
		\|g\phi\|_{H_{(\gamma)}^s(J)}\leq C\|g\|_{C^{\lceil s\rceil}(J)}\|\phi\|_{H_{(\gamma)}^s(J)},
	\end{equation}
	where $\lceil s\rceil$ is the smallest integer greater than $s$.
\end{lem}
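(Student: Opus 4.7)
The plan is to separate the two regimes $s\in\mathbb{N}_0$ and $s\in\mathbb{R}^+\setminus\mathbb{N}_0$, since the norm on $H^s_{(\gamma)}(J)$ has a different structure in each case, and to reduce everything to Leibniz--Hardy style computations.

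First I would handle the integer case $s=n$. The Leibniz rule gives
\[D^k(g\phi)=\sum_{j=0}^k\binom{k}{j}D^jg\,D^{k-j}\phi,\qquad 0\leq k\leq n,\]
so, pulling out $\|D^jg\|_{L^\infty(J)}\leq\|g\|_{C^n(J)}$ and factoring the weight on the bounded interval $J=(0,3T/4)$ as $x^{\gamma+k}=x^j\cdot x^{\gamma+k-j}$ with $x^j\leq (3T/4)^j$, one gets
\[\|D^jg\cdot D^{k-j}\phi\|_{L^2_{(\gamma+k)}(J)}\leq C\|g\|_{C^n(J)}\|D^{k-j}\phi\|_{L^2_{(\gamma+k-j)}(J)}.\]
Summing over $k$ and $j$ yields \eqref{6.2} in the integer case, noting $\|g\|_{C^n(J)}\leq\|g\|_{C^{\lceil s\rceil}(J)}$.

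For non-integer $s=n+\nu$ with $0<\nu<1$, the integer-part sum is bounded as above, and the remaining task is the Slobodetski-type seminorm
\[|g\phi|^2_{H^s_{(\gamma)}(J)}=\iint_{\Lambda^*}x^{\gamma+s}\frac{|D^n(g\phi)(x)-D^n(g\phi)(y)|^2}{|x-y|^{1+2\nu}}\,dy\,dx.\]
I would apply Leibniz to $D^n(g\phi)$ and, for each $0\leq j\leq n$, split the $j$-th term as
\[D^jg(x)D^{n-j}\phi(x)-D^jg(y)D^{n-j}\phi(y)=D^jg(x)[D^{n-j}\phi(x)-D^{n-j}\phi(y)]+[D^jg(x)-D^jg(y)]D^{n-j}\phi(y).\]
In the second summand, the mean value theorem applied to $D^jg\in C^1(J)$ (which is exactly why the hypothesis requires $g\in C^{n+1}(J)=C^{\lceil s\rceil}(J)$) gives $|D^jg(x)-D^jg(y)|\leq\|g\|_{C^{n+1}(J)}|x-y|$, converting the singular factor $|x-y|^{-1-2\nu}$ into the integrable $|x-y|^{1-2\nu}$; after the change of variables $z=y/x$ and using $|x-y|\leq x/2$ on $\Lambda^*$, the resulting integral is controlled by $\|D^{n-j}\phi\|^2_{L^2_{(\gamma+n-j)}(J)}\leq\|\phi\|^2_{H^s_{(\gamma)}(J)}$.

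The main obstacle lies in the $j\geq 1$ pieces of the first summand, that is, in controlling
\[J_j:=\iint_{\Lambda^*}x^{\gamma+s}\frac{|D^{n-j}\phi(x)-D^{n-j}\phi(y)|^2}{|x-y|^{1+2\nu}}\,dy\,dx\]
by $\|\phi\|^2_{H^s_{(\gamma)}(J)}$, because this seminorm of $D^{n-j}\phi$ carries the weight index $\gamma+s$ rather than the natural $\gamma+(n-j)+\nu$. My plan is to exploit the fact that for $j\geq 1$ the extra derivative $D^{n-j+1}\phi$ already lies in $L^2_{(\gamma+n-j+1)}(J)$ as part of the $H^s_{(\gamma)}(J)$-norm: writing $D^{n-j}\phi(x)-D^{n-j}\phi(y)=\int_y^x D^{n-j+1}\phi(t)\,dt$, applying Cauchy--Schwarz, then changing variables to $z=y/x$ on the wedge $y\in(\tfrac{2}{3}x,\tfrac{3}{2}x)$, and finally invoking Hardy's inequality (Lemma \ref{Bern2}) to absorb the remaining negative-order weight, the shift between $\gamma+s$ and $\gamma+n-j+1$ is compensated by the extra powers of $x$ that emerge from the substitution, giving $J_j\leq C\|\phi\|^2_{H^s_{(\gamma)}(J)}$. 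Collecting all contributions across the Leibniz sum then yields \eqref{6.2}.
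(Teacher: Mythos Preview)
The paper does not prove this lemma; it is simply quoted from \cite{ervin2} with no accompanying argument, so there is no in-paper proof to compare against. Your Leibniz-plus-direct-estimate strategy is the standard route and is essentially correct.

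One comment on the delicate step. For the terms $J_j$ with $j\ge 1$, after Cauchy--Schwarz on $D^{n-j}\phi(x)-D^{n-j}\phi(y)=\int_y^x D^{n-j+1}\phi$, the substitution $z=y/x$ and a Fubini interchange yield an inner $x$-integral of size $\sim t^{\gamma+n+2-\nu}|1-z|$, so the $z$-integral carries the power $|1-z|^{1-2\nu}$, which is integrable for all $0<\nu<1$. This already lands you at $\int t^{\gamma+n+2-\nu}|D^{n-j+1}\phi(t)|^2\,dt$, and since $j\ge 1$ and $J$ is bounded the exponent dominates $\gamma+n-j+1$, giving the bound by $\|D^{n-j+1}\phi\|_{L^2_{(\gamma+n-j+1)}(J)}^2$ directly; Hardy's inequality (Lemma \ref{Bern2}) is not actually needed here. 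If instead you first bound the $t$-integral uniformly in $z$ before integrating in $z$, you would face $\int|1-z|^{-2\nu}\,dz$, which diverges for $\nu\ge\tfrac12$---so be sure to do the Fubini carefully. With that adjustment the argument goes through, and it matches what one finds in \cite{ervin2}.
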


\begin{lem}\label{lemmaadd}
	If $\phi(x)\in H^{s}_{\omega^{0,\alpha}}(I)$ with $0\leq s< 3\alpha+1$, then $\omega^{0,\alpha}\phi(x)  \in H^{\min(s,2\alpha+1-\epsilon)}_{\omega^{\alpha,0}}(I)$ with arbitrarily small $\epsilon>0$.
\end{lem}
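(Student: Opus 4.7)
The plan is to reduce the statement on $I$ to two one-sided statements on $J=(0,\tfrac{3T}{4})$ via Lemma \ref{lem:2}, then invoke Lemma \ref{ervin} at the singular left endpoint and Lemma \ref{Ervinadd} at the smooth right endpoint. Set $v:=\omega^{0,\alpha}\phi=x^{\alpha}\phi$ and $q:=\min(s,\,2\alpha+1-\epsilon)$. By Lemma \ref{lem:2} (with $\gamma=0$, $\beta=\alpha$) the hypothesis $\phi\in H^s_{\omega^{0,\alpha}}(I)$ unpacks to $\phi\in H^s_{(\alpha)}(J)$ and $\widehat\phi\in H^s_{(0)}(J)$, while the desired conclusion $v\in H^q_{\omega^{\alpha,0}}(I)$ unpacks (with $\gamma=\alpha$, $\beta=0$) to (i) $x^{\alpha}\phi\in H^q_{(0)}(J)$ and (ii) $(T-x)^{\alpha}\widehat\phi\in H^q_{(\alpha)}(J)$, since $\widehat{x^{\alpha}\phi}(x)=(T-x)^{\alpha}\widehat\phi(x)$.

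For (i), I would apply Lemma \ref{ervin} to $\psi=\phi$ with $\mu=\alpha$, $p=\alpha$, $\sigma=0$. Its four conditions collapse to $0\le q\le s$, $\alpha\ge 0$, $q<2\alpha+1$, and $q\ge s-\alpha$. The first two are automatic, the third holds because $q\le 2\alpha+1-\epsilon<2\alpha+1$, and the fourth is the only place the assumption $s<3\alpha+1$ is used: when $s\le 2\alpha+1-\epsilon$ one takes $q=s$ so $s-\alpha\le s$ trivially; when $s>2\alpha+1-\epsilon$ one takes $q=2\alpha+1-\epsilon$, reducing the inequality to $s\le 3\alpha+1-\epsilon$, which holds provided $\epsilon$ is chosen in $(0,\,3\alpha+1-s)$.

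For (ii), since $J$ is bounded away from $T$, the function $g(x)=(T-x)^{\alpha}$ belongs to $C^{\infty}(\bar J)$ and in particular to $C^{\lceil q\rceil}(J)$. The elementary embedding $H^s_{(0)}(J)\hookrightarrow H^{q}_{(\alpha)}(J)$ for $q\le s$, immediate from the definitions because $x^{\alpha+k}\le Cx^{k}$ on $J$ for $k\ge 0$, places $\widehat\phi$ in $H^q_{(\alpha)}(J)$, and then Lemma \ref{Ervinadd} yields $g\widehat\phi\in H^q_{(\alpha)}(J)$. Combining (i) and (ii) through Lemma \ref{lem:2} gives the claim.

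The entire proof is really a careful bookkeeping of the four inequalities in Lemma \ref{ervin}; the thresholds $s<3\alpha+1$ and $q\le 2\alpha+1-\epsilon$ in the statement are exactly where these inequalities become tight, so the small $\epsilon$-loss at the boundary $q=2\alpha+1$ is unavoidable. The remark removing the spurious hypothesis $p\ge n$ from Lemma \ref{ervin} is essential here, since in our application $p=\alpha$ can be much smaller than $\lfloor s\rfloor$.
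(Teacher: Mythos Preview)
Your proof is correct and follows essentially the same route as the paper: reduce to $J$ via Lemma~\ref{lem:2}, apply Lemma~\ref{ervin} with $(\mu,p,\sigma)=(\alpha,\alpha,0)$ at the singular left endpoint, and use the trivial weight embedding together with Lemma~\ref{Ervinadd} at the smooth right endpoint. Your write-up is in fact more explicit than the paper's, since you spell out exactly which of the four inequalities in Lemma~\ref{ervin} forces the cap $q\le 2\alpha+1-\epsilon$ and which one consumes the hypothesis $s<3\alpha+1$, and you note correctly that the removal of the spurious constraint $p\ge n$ in the Remark is what makes the application of Lemma~\ref{ervin} with $p=\alpha$ legitimate.
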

\begin{proof}
	By Lemma \ref{lem:2}, we have that
	\begin{equation}\label{eqadd:1}
		\phi(x)\in H^{s}_{\omega^{0,\alpha}}(I)\Longleftrightarrow	\phi(x)\in H^{s}_{(\alpha)}(J) \; \text{and}\; \widehat{\phi}(x)\in H^{s}_{(0)}(J),
	\end{equation}
	where  $\widehat{\phi}(x):=\phi(T-x)$.
	
	Let $\Psi(x):=\omega^{0,\alpha}(x)\phi(x)= x^{\alpha}\phi(x)$ and denote $\widehat{\Psi}(x):=\Psi(T-x)=\left(T-x\right)^{\alpha}\widehat{\phi}(x)$.
	
	When $\phi(x)\in H^{s}_{\omega^{0,\alpha}}(I)$,  by using \eqref{eqadd:1} and Lemma \ref{ervin} with $\mu=\alpha$, $p=\alpha$, choosing $\sigma=0$, we have
	$\Psi(x)\in H^{\min(s,2\alpha+1-\epsilon)}_{(0)}(J)$.
	Moreover, by the definition of space $H^{s}_{(\zeta)}(J)$, we have
	\begin{equation*}
		\|\widehat{\phi}\|_{H^{s}_{(\alpha)}(J)}\leq C\|\widehat{\phi}\|_{H^{s}_{(0)}(J)},
	\end{equation*}
	thus $\widehat{\phi}(x)\in H^{s}_{(\alpha)}(J)$.
	Using Lemma \ref{Ervinadd}, it follows that
	\begin{equation*}
		\|\widehat{\Psi}\|_{H^{s}_{(\alpha)}(J)}\leq \|\omega^{\alpha,0}\|_{C^{\lceil s\rceil}(J)} \|\widehat{\phi}\|_{H^{s}_{(\alpha)}(J)},
	\end{equation*}
	that is,  $\widehat{\Psi}(x)\in H^{s}_{(\alpha)}(J)$.
	
	Based on above analysis,  using Lemma \ref{lem:2} again, we obtain that $\Psi(x)\in H^{\min(s,2\alpha+1-\epsilon)}_{\omega^{\alpha,0}}(I)$.
\end{proof}

\begin{thm}\label{theorem3.6}
	For the problem \eqref{FIVP} with  $\lambda>0$, if $f\in H^r_{\omega^{\alpha,0}}(I)\cap H^{-\frac{\alpha}{2}}(I)$ with $r\geq0$, then we have $\omega^{0,-\alpha}u \in H^{\alpha+\min(2\alpha+1-\epsilon,r)}_{\omega^{0,\alpha}}(I)$.
\end{thm}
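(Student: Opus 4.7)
The plan is to move the reaction term to the right-hand side and iterate Theorem \ref{T2} in a bootstrap, using Lemma \ref{lemmaadd} to convert between the weights $\omega^{0,\alpha}$ and $\omega^{\alpha,0}$ at each step. Rewriting \eqref{FIVP} as ${}_{0}D_t^{\alpha} u = f-\lambda u$ and applying Theorem \ref{T2} with $\beta=\gamma=\alpha$, so that $\omega^{0,\alpha-\beta}=1$, $\omega^{\gamma,\beta-\alpha}=\omega^{\alpha,0}$, and the requirement $\gamma>\alpha-1$ is trivial, the desired conclusion follows once $f-\lambda u\in H^{r_*}_{\omega^{\alpha,0}}(I)$ is established, where $r_*:=\min(r,2\alpha+1-\epsilon)$. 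Since $f\in H^{r}_{\omega^{\alpha,0}}\subset H^{r_*}_{\omega^{\alpha,0}}$ by hypothesis, the task reduces to showing $u\in H^{r_*}_{\omega^{\alpha,0}}(I)$.

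For the bootstrap, Theorem \ref{T1} supplies the base case $u\in {}_0H^{\alpha/2}(I)\hookrightarrow L^2(I)\hookrightarrow L^2_{\omega^{\alpha,0}}(I)$, so I set $s_0:=0$. Given $u\in H^{s_k}_{\omega^{\alpha,0}}$, one has $f-\lambda u\in H^{\min(r,s_k)}_{\omega^{\alpha,0}}$; Theorem \ref{T2} then gives $\omega^{0,-\alpha}u\in H^{\alpha+\min(r,s_k)}_{\omega^{0,\alpha}}$, and Lemma \ref{lemmaadd} applied to $\phi:=\omega^{0,-\alpha}u$ yields $u\in H^{s_{k+1}}_{\omega^{\alpha,0}}$ with
\[
s_{k+1}:=\min\bigl(\alpha+\min(r,s_k),\ 2\alpha+1-\epsilon\bigr).
\]
Since each step raises $s_k$ by $\alpha$ until it saturates at a value at least $r_*$, finitely many iterations produce $u\in H^{r_*}_{\omega^{\alpha,0}}$, and one last application of Theorem \ref{T2} delivers $\omega^{0,-\alpha}u\in H^{\alpha+r_*}_{\omega^{0,\alpha}}$ as claimed.

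The main obstacle is verifying the admissibility hypothesis of Theorem \ref{T2}: with $\beta=\alpha$, the inequality $\beta>1-\alpha$ holds only when $\alpha>1/2$, so for $\alpha\le 1/2$ one must instead invoke the alternate assumption requiring $f-\lambda u\in C(\bar I)$. Because the bootstrap quickly lifts $s_k$ above the one-dimensional embedding threshold for $H^{s}_{\omega^{\alpha,0}}\hookrightarrow C(\bar I)$, continuity of $f-\lambda u$ is available from the second iteration onward; the initial step can be handled either by inspecting the representation $u={}_{0}I_t^{\alpha}(f-\lambda u)$ together with the $L^{\infty}$ control inherited from Theorem \ref{T1}, or by first applying Theorem \ref{T2} with auxiliary parameters $(\beta',\gamma')$ satisfying $\beta'>1-\alpha$ to gain preliminary regularity before switching back to $\beta=\alpha$. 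A secondary but routine check is that the intermediate index always satisfies $s_{k+1}<3\alpha+1$ so that Lemma \ref{lemmaadd} is applicable, which is automatic from the bound $s_{k+1}\le 2\alpha+1-\epsilon$.
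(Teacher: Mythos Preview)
Your approach is essentially the paper's own: the paper also rewrites the equation as ${}_0D_t^{\alpha}u=\hat f:=f-\lambda u$, starts from the embedding $u\in H^{\alpha/2}(I)\subset H^{\alpha/2}_{\omega^{\alpha,0}}(I)$ (so its base index is $\alpha/2$ rather than your $s_0=0$, saving one iteration), and then alternates Theorem~\ref{T2} with $\gamma=\beta=\alpha$ and Lemma~\ref{lemmaadd} until the index saturates at $\min(r,2\alpha+1-\epsilon)$.

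Your final paragraph goes beyond the paper: you correctly flag that the hypothesis $\beta>1-\alpha$ of Theorem~\ref{T2} fails when $\alpha\le 1/2$, a point the paper's proof passes over without comment. However, your suggested patches do not quite work as written. Theorem~\ref{T1} gives only $u\in H^{\alpha/2}(I)$, which does not embed into $L^\infty$ for $\alpha<1$, so no $L^\infty$ control is inherited. The weighted embedding $H^{s}_{\omega^{\alpha,0}}\hookrightarrow C(\bar I)$ is false in general because the weight $(T-t)^{\alpha}$ vanishes at $t=T$, so raising $s_k$ does not force continuity. And even if $u$ were continuous, $\hat f=f-\lambda u\in C(\bar I)$ would still require $f\in C(\bar I)$, which is not among the hypotheses. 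This is therefore a genuine technical loose end shared by your argument and the paper's; of your suggestions, the one most likely to succeed is the auxiliary-parameter route, or alternatively a direct spectral argument in the spirit of the later Theorem~\ref{Tadd1}.
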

\begin{proof}
	For $f\in H^r_{\omega^{\alpha,0}}(I)\cap H^{-\frac{\alpha}{2}}(I)$ with $r\geq0$, by Theorem \ref{T1}, there exists a unique solution $u\in H^{\frac{\alpha}{2}}(I)$. Note that $H^{\frac{\alpha}{2}}(I)\subset H^{\frac{\alpha}{2}}_{\omega^{\alpha,0}}(I)$. In fact,
	\begin{eqnarray*}\begin{aligned}
			\|u\|_{H^{\frac{\alpha}{2}}_{\omega^{\alpha,0}} }&=\|u\|^2_{{\omega^{\alpha,0}} }+\iint_{\Omega_{I,3/2}}(T-x)^{\frac{3\alpha}{2}}x^{\frac{\alpha}{2}}\frac{|u(x)-u(y)|^2}{|x-y|^{1+\alpha}}dydx\\
			&\leq C\Big[\|u\|^2_{L^{2} }+\int_I\int_I\frac{|u(x)-u(y)|^2}{|x-y|^{1+\alpha}}dydx\Big]=C\|u\|_{H^{\frac{\alpha}{2}} }.
	\end{aligned}\end{eqnarray*}
	Hence, by $u\in H^{\frac{\alpha}{2}}_{\omega^{\alpha,0}}(I)$, we have
	${}_{0}D_t^{\alpha}u=f-\lambda u:=\hat{f}\in H^{\min(\frac{\alpha}{2},r)}_{\omega^{\alpha,0}}(I)$. By using Theorem \ref{T2} with $\gamma=\beta=\alpha$, we have $\omega^{0,-\alpha}u\in H^{\alpha+\min(\frac{\alpha}{2},r)}_{\omega^{0,\alpha}}(I)$.
	
	If $r\geq \frac{\alpha}{2}$, then $\omega^{0,-\alpha}u\in H^{\frac{3\alpha}{2}}_{\omega^{0,\alpha}}(I)$,
	by using Lemma \ref{lemmaadd} we have $u\in H^{\min(\frac{3\alpha}{2},2\alpha+1-\epsilon)}_{\omega^{\alpha,0}}(I)$. Then $\hat{f}\in H^{\min(\frac{3\alpha}{2},2\alpha+1-\epsilon,r)}_{\omega^{\alpha,0}}(I)$,  and $\omega^{0,-\alpha}u\in H^{\alpha+\min(\frac{3\alpha}{2},2\alpha+1-\epsilon,r)}_{\omega^{0,\alpha}}(I)$.
	
	Similarly, if $r\geq \frac{3\alpha}{2}$, we can follow the argument to lift the regularity. Suppose that after repeating the lifting procedure $k$ times, where $k$ is the least integer such that $(k+1/2)\alpha\geq 2\alpha+1-\epsilon,$ we have $$\omega^{0,-\alpha}u\in H^{\alpha+\min((k+1/2)\alpha,2\alpha+1-\epsilon,r)}_{\omega^{0,\alpha}}(I)=H^{\alpha+\min(2\alpha+1-\epsilon,r)}_{\omega^{0,\alpha}}(I).$$
\end{proof}

\begin{lem}[\cite{ervin2}]\label{Lem}
	Let $v(x)=x^{\mu}$. Then $v\in H^{s}_{\omega^{\gamma,\beta}}(I)$ for $s<2\mu+\beta+1$.
\end{lem}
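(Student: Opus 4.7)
The plan is to decouple the endpoint behaviour of $v(x)=x^{\mu}$ using Lemma \ref{lem:2}, which reduces the claim to verifying that $v\in H^{s}_{(\beta)}(J)$ and that $\widehat v(x)=(T-x)^{\mu}\in H^{s}_{(\gamma)}(J)$ with $J=(0,\tfrac{3}{4}T)$. On $J$ the function $\widehat v$ is $C^{\infty}$ on the closed interval $\bar J$ (since $T-x\in[\tfrac{T}{4},T]$ stays bounded away from zero), so $\widehat v\in H^{s}_{(\gamma)}(J)$ for every $s\geq 0$ and every $\gamma>-1$ without restriction. Hence the entire burden is to analyze $v(x)=x^{\mu}$ near the origin, where the only singularity lives, and this is what should produce the bound $s<2\mu+\beta+1$.

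For the integer part, I would compute $D^{k}v(x)=\mu(\mu-1)\cdots(\mu-k+1)\,x^{\mu-k}$ and evaluate
\begin{equation*}
\|D^{k}v\|^{2}_{L^{2}_{(\beta+k)}(J)}= c_{k}^{2}\int_{0}^{3T/4} x^{\,2\mu+\beta-k}\,dx,\qquad k=0,1,\dots,\lfloor s\rfloor,
\end{equation*}
which is finite iff $2\mu+\beta-k>-1$. The binding constraint is at $k=\lfloor s\rfloor$, giving $\lfloor s\rfloor<2\mu+\beta+1$. This settles the integer case and, for noninteger $s$, controls all the $L^{2}$ pieces in the equivalent norm.

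For noninteger $s=\lfloor s\rfloor+\nu$, the real work is the Gagliardo seminorm
\begin{equation*}
|v|^{2}_{H^{s}_{(\beta)}(J)}=\iint_{\Lambda^{*}} x^{\beta+s}\,\frac{|D^{\lfloor s\rfloor}v(x)-D^{\lfloor s\rfloor}v(y)|^{2}}{|x-y|^{1+2\nu}}\,dy\,dx.
\end{equation*}
On the near-origin slab $\{(x,y):\tfrac{2}{3}x<y<\tfrac{3}{2}x,\ 0<x<\tfrac{T}{2}\}$ I would use the homogeneity substitution $y=zx$, $dy=x\,dz$, which factorises the integrand as $x^{\beta+2\mu-s}$ times a $z$-integral
\begin{equation*}
c_{\lfloor s\rfloor}^{2}\int_{2/3}^{3/2}\frac{|1-z^{\mu-\lfloor s\rfloor}|^{2}}{|1-z|^{1+2\nu}}\,dz.
\end{equation*}
A first-order Taylor expansion at $z=1$ shows the $z$-integrand behaves like $|1-z|^{1-2\nu}$, hence is integrable since $\nu<1$, so this factor is a finite constant. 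The $x$-integral $\int_{0}^{T/2}x^{\beta+2\mu-s}\,dx$ converges iff $\beta+2\mu-s>-1$, i.e.\ $s<2\mu+\beta+1$, which is exactly the hypothesis. On the remaining component of $\Lambda^{*}$, the variable $x$ is bounded below by $\tfrac{T}{2}$, so $v$ and its derivatives are $C^{\infty}$ there and the seminorm contribution is trivially finite by Lemma \ref{Ervinadd} (or a direct estimate).

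The main obstacle I anticipate is the careful bookkeeping of the $x$-exponent after the substitution $y=zx$: verifying that the factor $|x-y|^{-1-2\nu}$ combined with $x^{\beta+s}$, $|D^{\lfloor s\rfloor}v(x)-D^{\lfloor s\rfloor}v(y)|^{2}$, and the Jacobian indeed collapses to $x^{\beta+2\mu-s}$. Once this computation is confirmed, the sharp threshold $s<2\mu+\beta+1$ emerges uniformly from the integer and fractional pieces, and assembling the two halves through Lemma \ref{lem:2} completes the argument.
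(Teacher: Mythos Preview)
The paper does not supply its own proof of this lemma; it is quoted from \cite{ervin2} without argument. Your proposal is correct and follows the natural route one would take to prove it directly: decouple the endpoints via Lemma~\ref{lem:2}, dispose of $\widehat v$ by smoothness on $\bar J$, and for $v(x)=x^{\mu}$ near the origin compute the weighted derivative norms and the Gagliardo seminorm explicitly. Your exponent bookkeeping after the substitution $y=zx$ is right: the weight $x^{\beta+s}$, the difference $|x^{\mu-\lfloor s\rfloor}-y^{\mu-\lfloor s\rfloor}|^{2}=x^{2(\mu-\lfloor s\rfloor)}|1-z^{\mu-\lfloor s\rfloor}|^{2}$, the kernel $|x-y|^{-1-2\nu}=x^{-1-2\nu}|1-z|^{-1-2\nu}$, and the Jacobian $x\,dz$ combine to give the $x$-factor $x^{\beta+2\mu-s}$, and its integrability on $(0,T/2)$ is exactly the condition $s<2\mu+\beta+1$. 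The $z$-integral you isolate is finite by the Taylor argument you give; this is the same integral the paper bounds in the Remark after Lemma~\ref{ervin} via Hardy's inequality~\eqref{Hardy}, so either device works. On the second component of $\Lambda^{*}$ both $x$ and $y$ stay bounded away from $0$, so the Lipschitz bound on $D^{\lfloor s\rfloor}v$ there makes the contribution finite, as you note.
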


\begin{rem}\label{re}
	It is well known that the FIVP \eqref{FIVP} is equivalent to the weakly singular Volterra integral equation
	\begin{equation}\label{req1}
		u(t)=g(t)-{}_{0}I_t^{\alpha}(\lambda u(t)),\  t\in [0,T].
	\end{equation}
	where $g(t)=({}_{0}I_t^{\alpha}f)(t)$.
	Following \cite{bru} (Theorems 6.1.2 and 6.1.6) and \cite{hu}, if $f\in C^1[0,T]$, the solution of \eqref{req1} is
	\begin{equation*}
		u(t)=\frac{f(0)}{\Gamma(\alpha+1)}t^{\alpha}
		+\sum\limits_{k=1}^{\infty}b_k\cdot t^{k\alpha+1},
	\end{equation*}
	where coefficients $b_k$ are some constants. Thus when $f(0)=0$, by Lemma \ref{Lem}, the regularity index of solution $u$ is at least $s=2(\alpha+1)+1-\epsilon$ in $H^{s}_{\omega^{\alpha,0}}(I)$. Note that ${}_{0}D_t^{\alpha}u=f-\lambda u:=\hat{f}\in H^{\min(2\alpha+3-\epsilon,r)}_{\omega^{\alpha,0}}(I)$. By Theorem \ref{T2} we have that $\omega^{0,-\alpha}u \in H^{\alpha+\min(2\alpha+3-\epsilon,r)}_{\omega^{0,\alpha}}(I)$, which  will be verified in Example \ref{ex3}.
\end{rem}

\section{ Petrov-Galerkin formulation }\label{section4}
In this section, we introduce a Petrov-Galerkin formulation and study its well-posedness and regularity. The idea has been well used in \cite{hao2}.   We consider the  Petrov-Galerkin ultra-weak formulation of the FIVPs \eqref{FIVP}: Given $f\in H^{-\alpha}_{\omega^{\alpha,0}}(I)\ \cap \ H^{-\frac{\alpha}{2}}(I)$, to find $u\in L^2_{\omega^{0,-\alpha}}(I)$ such that
\begin{equation}\label{uweak}
	b(u,v):=(u, {}_{t}D_T^{\alpha}(\omega^{\alpha,0}v))+\lambda(u,v)_{\omega^{\alpha,0}}=\langle f,v\rangle_{\omega^{\alpha,0}},\ \forall v\in H^{\alpha}_{\omega^{\alpha,0}}(I),
\end{equation}
where $\langle\cdot,\cdot\rangle_{\omega^{\gamma,\beta}}$ denotes the $L^2_{\omega^{\gamma,\beta}}$ duality pair of $H^{-\mu}_{\omega^{\gamma,\beta}}(I)$ and $H^{\mu}_{\omega^{\gamma,\beta}}(I)$, $\gamma,\beta>-1$ and $\mu\geq0$.

To establish the well-posedness of this problem, we need to consider the adjoint problem of original problem \eqref{FIVP}:
\begin{eqnarray}\label{aFIVP}\begin{aligned}
		{}_{t}D_T^{\alpha}z+\lambda z&=g(t),\ t\in I=(0,T),\\
		z(T)&=0.
\end{aligned}\end{eqnarray}
Its  Petrov-Galerkin weak formulation is: Given $g\in L^2_{\omega^{0,\alpha}}(I)\ \cap \ H^{-\frac{\alpha}{2}}(I)$, to find $ \hat{z} \in H^{\alpha}_{\omega^{\alpha,0}}(I) $ such that
\begin{equation}\label{adjoint}
	\widetilde{a}(\hat{z},w):=( {}_{t}D_T^{\alpha}(\omega^{\alpha,0}\hat{z}),w)_{\omega^{0,\alpha}}+\lambda(\omega^{\alpha,0}\hat{z},w)_{\omega^{0,\alpha}}=\langle g,w\rangle_{\omega^{0,\alpha}},\ \forall w\in L^2_{\omega^{0,\alpha}}(I).
\end{equation}

The well-posedness of problem \eqref{adjoint} can be established similarly as that in Theorems \ref{T1} and \ref{theorem3.6}.
\begin{lem}\label{awell}
	For $g\in L^2_{\omega^{0,\alpha}}(I)\ \cap \ H^{-\frac{\alpha}{2}}(I)$, there exists a unique solution $\hat{z}\in  H^{\alpha}_{\omega^{\alpha,0}}(I)$ to \eqref{adjoint} satisfying
	\begin{equation}\label{awellp}
		\|\hat{z}\|_{H^{\alpha}_{\omega^{\alpha,0}}}\leq C\|g\|_{{\omega^{0,\alpha}}}.
	\end{equation}
\end{lem}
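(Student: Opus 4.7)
The plan is to follow exactly the strategy of Theorems \ref{T1} and \ref{theorem3.6}, but applied to the reflected problem. First, since $w$ ranges over all of $L^2_{\omega^{0,\alpha}}(I)$ in \eqref{adjoint}, the variational identity is equivalent to the pointwise strong equation ${}_t D_T^\alpha z + \lambda z = g$ with $z(T)=0$, where $z := \omega^{\alpha,0}\hat z = (T-t)^\alpha \hat z$. This is precisely the reflection of the forward FIVP \eqref{FIVP} under $t \mapsto T-t$: the left Caputo derivative becomes the right Caputo derivative, the initial condition at $0$ becomes a terminal condition at $T$, and by Lemma \ref{lem:2} the weighted Sobolev spaces $H^s_{\omega^{\alpha,0}}$ and $H^s_{\omega^{0,\alpha}}$ are interchanged.

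Next, I would establish the analog of Theorem \ref{T1} for $z$ via Lax--Milgram. Using Lemma \ref{parts} with $\alpha_1=\alpha_2=\alpha/2$, the natural weak form for $z$ on the subspace of $H^{\alpha/2}(I)$ functions vanishing at $T$ is $({}_tD_T^{\alpha/2}z,{}_0D_t^{\alpha/2}v)+\lambda(z,v)=\langle g,v\rangle$ for all $v\in {}_0H^{\alpha/2}(I)$. Boundedness is immediate from the duality \eqref{pars}, and coercivity for $\lambda\geq 0$ follows from the standard lower bound on $({}_0D_t^{\alpha/2}v,{}_tD_T^{\alpha/2}v)$. This yields a unique $z\in H^{\alpha/2}(I)$ (with trace vanishing at $T$) satisfying $\|z\|_{H^{\alpha/2}}\leq C\|g\|_{H^{-\alpha/2}}$. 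The bootstrap of Theorem \ref{theorem3.6} then runs on the reflected side: since $H^{\alpha/2}(I)\hookrightarrow L^2_{\omega^{0,\alpha}}(I)$, one has ${}_tD_T^\alpha z = g-\lambda z\in L^2_{\omega^{0,\alpha}}(I)$, so the mirrored version of Theorem \ref{T2} (with $r=0$ and $\gamma=\beta=\alpha$) delivers $\hat z = \omega^{-\alpha,0}z \in H^\alpha_{\omega^{\alpha,0}}(I)$ together with the stability bound \eqref{awellp}.

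The main obstacle is making the reflection argument airtight: checking that Theorems \ref{T1} and \ref{T2} pass under the isometry $t\mapsto T-t$, which interchanges ${}_0D_t^\alpha$ with ${}_tD_T^\alpha$ and, via Lemma \ref{lem:2}, interchanges the weighted spaces $H^s_{\omega^{\alpha,0}}(I)$ and $H^s_{\omega^{0,\alpha}}(I)$; and verifying that the change of unknown $z=\omega^{\alpha,0}\hat z$ really converts membership of $\hat z$ in $H^\alpha_{\omega^{\alpha,0}}$ into the correct mirrored regularity class for $z$. Both checks are essentially bookkeeping, but they must be written out carefully in order to invoke the Section \ref{section3} results verbatim. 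Once they are in place, no estimates beyond those already collected for the forward problem are needed.
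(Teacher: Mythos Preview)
Your proposal is correct and matches the paper's approach exactly: the paper gives no detailed proof of Lemma~\ref{awell}, stating only that it ``can be established similarly as that in Theorems~\ref{T1} and~\ref{theorem3.6},'' which is precisely your reflection-plus-bootstrap argument. The only bookkeeping point worth making explicit is the embedding $L^2_{\omega^{0,\alpha}}(I)\hookrightarrow H^{-\alpha/2}(I)$ (equivalently the Hardy-type inequality $H^{\alpha/2}(I)\hookrightarrow L^2_{\omega^{0,-\alpha}}(I)$ for $0<\alpha<1$), which the paper itself uses tacitly in the proof of Theorem~\ref{rdata} and which is needed to get the stability bound \eqref{awellp} purely in terms of $\|g\|_{\omega^{0,\alpha}}$.
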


\begin{thm}\label{rdata}
	For $f\in H^{-\alpha}_{\omega^{\alpha,0}}(I)\ \cap \ H^{-\frac{\alpha}{2}}(I)$,  there exists a unique solution $u\in L^2_{\omega^{0,-\alpha}}(I)$ to \eqref{uweak} such that
	\begin{equation}\label{uwell}
		\|u\|_{{\omega^{0,-\alpha}}}\leq C\|f\|_{H^{-\alpha}_{\omega^{\alpha,0}}}.
	\end{equation}
\end{thm}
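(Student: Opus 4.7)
The strategy is a transposition (duality) argument exploiting the well-posedness of the adjoint weak problem \eqref{adjoint} established in Lemma~\ref{awell}. For each datum $g\in L^2_{\omega^{0,\alpha}}(I)\cap H^{-\frac{\alpha}{2}}(I)$, write $\hat z_g\in H^\alpha_{\omega^{\alpha,0}}(I)$ for the unique solution furnished by Lemma~\ref{awell}, which satisfies $\|\hat z_g\|_{H^\alpha_{\omega^{\alpha,0}}}\le C\|g\|_{\omega^{0,\alpha}}$. I would first introduce the linear form
\[
F(g) := \langle f,\hat z_g\rangle_{\omega^{\alpha,0}},
\]
initially defined on $L^2_{\omega^{0,\alpha}}(I)\cap H^{-\frac{\alpha}{2}}(I)$. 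Using that $f\in H^{-\alpha}_{\omega^{\alpha,0}}(I)$ pairs with $H^\alpha_{\omega^{\alpha,0}}(I)$ via $\langle\cdot,\cdot\rangle_{\omega^{\alpha,0}}$, one has
\[
|F(g)|\le \|f\|_{H^{-\alpha}_{\omega^{\alpha,0}}}\|\hat z_g\|_{H^\alpha_{\omega^{\alpha,0}}}\le C\|f\|_{H^{-\alpha}_{\omega^{\alpha,0}}}\|g\|_{\omega^{0,\alpha}},
\]
so $F$ extends by density to a bounded linear functional on $L^2_{\omega^{0,\alpha}}(I)$ (e.g.\ $C_c^\infty(I)$ sits in both spaces and is dense in the former). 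Since Cauchy--Schwarz and Riesz identify the dual of $L^2_{\omega^{0,\alpha}}(I)$ with $L^2_{\omega^{0,-\alpha}}(I)$ via the unweighted pairing $(u,g)\mapsto\int_I ug\,dt$, there exists a unique $u\in L^2_{\omega^{0,-\alpha}}(I)$ with $F(g)=\int_I u g\,dt$ for all $g\in L^2_{\omega^{0,\alpha}}(I)$, together with $\|u\|_{\omega^{0,-\alpha}}\le C\|f\|_{H^{-\alpha}_{\omega^{\alpha,0}}}$, which delivers \eqref{uwell}.

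To verify that this $u$ solves \eqref{uweak}, I would fix $v\in H^\alpha_{\omega^{\alpha,0}}(I)$ and set $g_v := {}_tD_T^\alpha(\omega^{\alpha,0}v)+\lambda\,\omega^{\alpha,0}v$. Mapping properties of the right Riemann--Liouville derivative acting on the weighted lift $\omega^{\alpha,0}v=(T-t)^\alpha v$ (the vanishing boundary factor supplies the correct endpoint behavior, verified first on weighted Jacobi polynomials through \eqref{rderivative}) yield $g_v\in L^2_{\omega^{0,\alpha}}(I)\cap H^{-\frac{\alpha}{2}}(I)$. Letting the test function $w$ range over $L^2_{\omega^{0,\alpha}}$ in \eqref{adjoint} shows that $\hat z_g$ actually satisfies the strong equation $g={}_tD_T^\alpha(\omega^{\alpha,0}\hat z_g)+\lambda\,\omega^{\alpha,0}\hat z_g$ almost everywhere; applied with $g=g_v$, the uniqueness in Lemma~\ref{awell} gives $\hat z_{g_v}=v$. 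Substituting,
\[
b(u,v)=\int_I u\,g_v\,dt = F(g_v) = \langle f,v\rangle_{\omega^{\alpha,0}},
\]
which is \eqref{uweak}. Uniqueness is immediate from the same identity: if $b(u,v)=0$ for every $v\in H^\alpha_{\omega^{\alpha,0}}(I)$, then setting $v=\hat z_g$ with $g$ ranging over $L^2_{\omega^{0,\alpha}}(I)\cap H^{-\frac{\alpha}{2}}(I)$ gives $\int_I u g\,dt=0$, and density in $L^2_{\omega^{0,\alpha}}(I)$ together with the duality above forces $u=0$.

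The chief obstacle is the mapping property that $v\mapsto {}_tD_T^\alpha(\omega^{\alpha,0}v)$ sends $H^\alpha_{\omega^{\alpha,0}}(I)$ boundedly into $L^2_{\omega^{0,\alpha}}(I)$ (and the entire lift into $H^{-\frac{\alpha}{2}}(I)$ so Lemma~\ref{awell} applies). This weighted estimate is delicate because $\omega^{\alpha,0}$ vanishes precisely at $t=T$, where ${}_tD_T^\alpha$ carries its endpoint singularity, yet the two effects cancel -- this is the mirror image of the primal identity \eqref{derivative} and can be read off from \eqref{rderivative} on the weighted Jacobi basis, then extended by density using the equivalent norm \eqref{norm2}. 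Should a direct mapping estimate seem too sharp, an alternative is to bypass $\hat z_{g_v}=v$ entirely and work on the dense subspace $\{\hat z_g:g\in L^2_{\omega^{0,\alpha}}\cap H^{-\frac{\alpha}{2}}\}\subset H^\alpha_{\omega^{\alpha,0}}$, then extend $b(u,v)=\langle f,v\rangle_{\omega^{\alpha,0}}$ by continuity of both sides on $H^\alpha_{\omega^{\alpha,0}}(I)$.
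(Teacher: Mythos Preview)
Your transposition/Riesz argument is correct and rests on the same key ingredient as the paper---Lemma~\ref{awell} on the adjoint problem---but the paper packages this differently. The paper invokes the Babu\v{s}ka--Aziz theorem and verifies its three hypotheses directly: continuity of $b(\cdot,\cdot)$ via the Jacobi expansion and \eqref{rderivative}; the inf-sup condition by, for each $u\in L^2_{\omega^{0,-\alpha}}$, solving the adjoint problem of Lemma~\ref{awell} with data $\omega^{0,-\alpha}u$ to produce $\hat v$ with $b(u,\hat v)=\|u\|^2_{\omega^{0,-\alpha}}$ and $\|\hat v\|_{H^\alpha_{\omega^{\alpha,0}}}\le C\|u\|_{\omega^{0,-\alpha}}$; and the transposed condition by choosing $\omega^{0,-\alpha}u={}_tD_T^\alpha(\omega^{\alpha,0}v)+\lambda\omega^{\alpha,0}v$. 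You instead construct $u$ directly as the Riesz representative of $g\mapsto\langle f,\hat z_g\rangle_{\omega^{\alpha,0}}$ and verify the weak formulation afterward. Both proofs hinge on the same mapping property $v\mapsto{}_tD_T^\alpha(\omega^{\alpha,0}v):H^\alpha_{\omega^{\alpha,0}}\to L^2_{\omega^{0,\alpha}}$ (which the paper computes explicitly on the Jacobi basis and which you identify as the chief obstacle), and both tacitly use $L^2_{\omega^{0,\alpha}}\hookrightarrow H^{-\frac{\alpha}{2}}$ to feed data into Lemma~\ref{awell}. The practical advantage of the paper's inf-sup formulation is that the continuous inf-sup estimate is reused verbatim in the stability proof of the discrete scheme (Theorem~\ref{stability}); your Riesz route is more constructive for existence alone but would need to be reworked to yield a discrete inf-sup.
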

\begin{proof}  The well-posedness of problem \eqref{uweak} is guaranteed by the well-known  Babu\v{s}ka-Aziz theorem \cite{aziz1}.
	
	To establish the continuity of bilinear form $b(\cdot,\cdot)$ on $L^2_{\omega^{0,-\alpha}}(I) \times H^{\alpha}_{\omega^{\alpha,0}}(I)$,
	we  write
	\begin{align}\label{j1}
		v=\sum\limits_{n=0}^{\infty}v_n^{\alpha,0}Q_n^{\alpha,0}(x),\  \forall  v\in H^{\alpha}_{\omega^{\alpha,0}}(I).
	\end{align}
	Let $\lambda_n^{\alpha}=\frac{\Gamma(n+\alpha+1)}{\Gamma(n+1)}$. Note that by Stirling's formula, $\lambda_n^{\alpha}\approx n^{\alpha}$, i.e. there exists $c_1, c_2>0$ such that $c_1n^{\alpha} \leq \lambda_n^{\alpha} \leq c_2n^{\alpha}$.
	By the Cauchy-Schwarz inequality, \eqref{j1}, \eqref{rderivative} and the fact that $h_n^{0,\alpha}=h_n^{\alpha,0}$, 
	we have
	\begin{align}\label{bic}
		(u,{}_{t}D_T^{\alpha}(\omega^{\alpha,0}v))&\leq \|u\|_{{\omega^{0,-\alpha}}} \|{}_{t}D_T^{\alpha}(\omega^{\alpha,0}v)\|_{{\omega^{0,\alpha}}}\nonumber\\
		&=\|u\|_{{\omega^{0,-\alpha}}}\Big(\sum\limits_{n=0}^{\infty}(v_n^{\alpha,0})^2(\lambda_n^{\alpha})^2h_n^{0,\alpha}\Big)^{1/2}\nonumber\\
		&\leq C\|u\|_{{\omega^{0,-\alpha}}} \|v\|_{H^{\alpha}_{\omega^{\alpha,0}}}\nonumber
	\end{align}
	and
	\begin{equation*}
		(u,v)_{\omega^{\alpha,0}}\leq \|u\|_{{\omega^{\alpha,0}}}\|v\|_{{\omega^{\alpha,0}}} \leq C\|u\|_{{\omega^{0,-\alpha}}}\|v\|_{H^{\alpha}_{\omega^{\alpha,0}}}.
	\end{equation*}
	Thus for $u\in L^2_{\omega^{0,-\alpha}}(I)$ and $v\in H^{\alpha}_{\omega^{\alpha,0}}(I)$,
	\begin{equation}\label{continuity}
		|b(u,v)|\leq C\|u\|_{{\omega^{0,-\alpha}}}\|v\|_{H^{\alpha}_{\omega^{\alpha,0}}}.
	\end{equation}
	Moreover, for any $w\in L^2_{\omega^{0,\alpha}}(I)$, $\hat{w}:=\omega^{0,\alpha}w\in L^2_{\omega^{0,-\alpha}}(I)$, the weak formulation \eqref{adjoint} can be rewritten as: Given $g\in L^2_{\omega^{0,\alpha}}(I)\ \cap \ H^{-\frac{\alpha}{2}}(I)$, to find $ \hat{z} \in H^{\alpha}_{\omega^{\alpha,0}}(I) $ such that
	\begin{equation*}\label{rewrite}
		\widetilde{a}(\hat{z},\omega^{0,-\alpha}\hat{w})=( {}_{t}D_T^{\alpha}(\omega^{\alpha,0}\hat{z}),\hat{w})+\lambda(\omega^{\alpha,0}\hat{z},\hat{w})=\langle g,\hat{w}\rangle,\ \forall \hat{w}\in L^2_{\omega^{0,-\alpha}}(I).
	\end{equation*}
	Then for any $u\in L^2_{\omega^{0,-\alpha}}(I)$, $\hat{u}:=\omega^{0,-\alpha}u\in L^2_{\omega^{0,\alpha}}(I)$ and thus $\hat{u}\in H^{-\frac{\alpha}{2}}(I)$, by Lemma \ref{awell}, there exists a unique $\hat{v}\in H^{\alpha}_{\omega^{\alpha,0}}(I)$ such that
	\begin{equation}\label{atilde}
		\widetilde{a}(\hat{v},\omega^{0,-\alpha}\hat{w})=(\hat{u},\hat{w})=(u,\hat{w})_{\omega^{0,-\alpha}}, \ \forall \hat{w}\in L^2_{\omega^{0,-\alpha}}(I),
	\end{equation}
	with
	\begin{equation}\label{thus}
		\|\hat{v}\|_{H^{\alpha}_{\omega^{\alpha,0}}}\leq C\|\omega^{0,-\alpha}u\|_{{\omega^{0,\alpha}}}=C\|u\|_{{\omega^{0,-\alpha}}}.
	\end{equation}
	Substituting $\hat{w}$ by $u$ in \eqref{atilde}, we have
	$$b(u,\hat{v})=(u, {}_{t}D_T^{\alpha}(\omega^{\alpha,0}\hat{v}))+\lambda(u,\hat{v})_{\omega^{\alpha,0}}=\widetilde{a}(\hat{v},\omega^{0,-\alpha}u)=\|u\|^2_{{\omega^{0,-\alpha}}}.$$
	
	According to above derivation, for any $u \in L^2_{\omega^{0,-\alpha}}(I)$, there exists a unique $\hat{v}\in H^{\alpha}_{\omega^{\alpha,0}}(I)$ such that $b(u,\hat{v})=\|u\|^2_{{\omega^{0,-\alpha}}}$.
	Combining it with \eqref{thus} we obtain
	\begin{equation}\label{infsup}
		\sup\limits_{0\not=v\in H^{\alpha}_{\omega^{\alpha,0}}}\frac{b(u,v)}{\|v\|_{H^{\alpha}_{\omega^{\alpha,0}}}}\geq \frac{\|u\|^2_{{\omega^{0,-\alpha}}}}{\|\hat{v}\|_{H^{\alpha}_{\omega^{\alpha,0}}}}\geq \frac{1}{C}\|u\|_{{\omega^{0,-\alpha}}},\ \forall\ 0 \not=u \in L^2_{\omega^{0,-\alpha}}(I).
	\end{equation}
	
	For any $0 \not=v\in H^{\alpha}_{\omega^{\alpha,0}}(I)$, taking $\omega^{0,-\alpha}u={}_{t}D_T^{\alpha}(\omega^{\alpha,0}v)+\lambda\omega^{\alpha,0}v$ in \eqref{uweak}, we have
	\begin{equation}\label{tinfsup}
		\sup\limits_{0\not=u\in L^2_{\omega^{0,-\alpha}}}b(u,v)\geq \|{}_{t}D_T^{\alpha}(\omega^{\alpha,0}v)+\lambda\omega^{\alpha,0}v\|^2_{{\omega^{0,\alpha}}}>0,\ \forall \ 0 \not=v\in H^{\alpha}_{\omega^{\alpha,0}}(I).
	\end{equation}
	
	Combining \eqref{continuity}, \eqref{infsup} and \eqref{tinfsup}, we obtain that there exists a unique solution $u\in L^2_{\omega^{0,-\alpha}}(I)$ such that \eqref{uwell} holds. 
\end{proof}

The above theorem leads to the following regularity result with more general data.
\begin{thm}\label{Tadd1}
	For the problem \eqref{uweak} with $\lambda=0$, if $f\in H^r_{\omega^{\alpha,0}}(I)\cap H^{-\frac{\alpha}{2}}(I)$ and $r\geq-\alpha$,   then $\omega^{0,-\alpha}u \in H^{r+\alpha}_{\omega^{0,\alpha}}(I)$.
\end{thm}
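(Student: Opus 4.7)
The plan is to solve the problem coefficient-by-coefficient in the Jacobi basis $\{Q_n^{\alpha,0}\}_{n\geq 0}$ and then read off the regularity of $w:=\omega^{0,-\alpha}u$ from the equivalent norm \eqref{norm2}. Existence and uniqueness of $u\in L^2_{\omega^{0,-\alpha}}(I)$ follow already from Theorem \ref{rdata}, since $H^r_{\omega^{\alpha,0}}(I)\hookrightarrow H^{-\alpha}_{\omega^{\alpha,0}}(I)$ for $r\geq -\alpha$; the only issue is therefore to identify the smoothness of $w$.

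First, I would plug the test function $v=Q_n^{\alpha,0}$ into \eqref{uweak} with $\lambda=0$. By the fractional-derivative identity \eqref{rderivative} applied with $\gamma=\alpha$, $\beta=0$,
\begin{equation*}
{}_{t}D_T^{\alpha}\bigl(\omega^{\alpha,0}Q_n^{\alpha,0}\bigr)=\lambda_n^{\alpha}\,Q_n^{0,\alpha},\qquad \lambda_n^{\alpha}:=\frac{\Gamma(n+\alpha+1)}{\Gamma(n+1)},
\end{equation*}
so that the left-hand side of \eqref{uweak} reads $\lambda_n^{\alpha}(u,Q_n^{0,\alpha})=\lambda_n^{\alpha}(w,Q_n^{0,\alpha})_{\omega^{0,\alpha}}=\lambda_n^{\alpha} w_n^{0,\alpha} h_n^{0,\alpha}$, where $w_n^{0,\alpha}$ are the Jacobi coefficients of $w=\omega^{0,-\alpha}u$. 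The right-hand side is $\langle f,Q_n^{\alpha,0}\rangle_{\omega^{\alpha,0}}=f_n^{\alpha,0}h_n^{\alpha,0}$, the pairing being interpreted through $H^{-\mu}_{\omega^{\alpha,0}}$--$H^{\mu}_{\omega^{\alpha,0}}$ duality for $\mu=\max\{0,-r\}\leq\alpha$; this is admissible because $Q_n^{\alpha,0}\in H^{\alpha}_{\omega^{\alpha,0}}(I)$.

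Since $h_n^{0,\alpha}=h_n^{\alpha,0}$, the two sides yield the scalar relation
\begin{equation*}
w_n^{0,\alpha}=\frac{f_n^{\alpha,0}}{\lambda_n^{\alpha}},\qquad n\geq 0.
\end{equation*}
Invoking the norm equivalence \eqref{norm2} and Stirling's estimate $\lambda_n^{\alpha}\asymp n^{\alpha}$ (so that $(1+n^2)^{r+\alpha}/(\lambda_n^{\alpha})^2\asymp(1+n^2)^{r}$), I obtain
\begin{equation*}
\|w\|^2_{H^{r+\alpha}_{\omega^{0,\alpha}}}\asymp\sum_{n=0}^{\infty}\frac{(f_n^{\alpha,0})^2}{(\lambda_n^{\alpha})^2}\,h_n^{\alpha,0}(1+n^2)^{r+\alpha}\asymp\sum_{n=0}^{\infty}(f_n^{\alpha,0})^2 h_n^{\alpha,0}(1+n^2)^{r}=\|f\|^2_{H^{r}_{\omega^{\alpha,0}}},
\end{equation*}
which gives $\omega^{0,-\alpha}u\in H^{r+\alpha}_{\omega^{0,\alpha}}(I)$ together with the estimate $\|\omega^{0,-\alpha}u\|_{H^{r+\alpha}_{\omega^{0,\alpha}}}\leq C\|f\|_{H^{r}_{\omega^{\alpha,0}}}$.

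The only delicate point is the justification that the weak equation really reduces to the stated scalar identity for every $n$ when $r\in[-\alpha,0)$, i.e., that $f_n^{\alpha,0}$ is well-defined as a duality pairing and that the bilinear form extends continuously so that the test with $v=Q_n^{\alpha,0}$ is legitimate. This however is built into \eqref{uweak}: test functions are taken from $H^{\alpha}_{\omega^{\alpha,0}}(I)$ and smooth Jacobi polynomials lie in this space, so the reduction is immediate. The rest of the argument is a direct bookkeeping of powers in the equivalent norm \eqref{norm2}.
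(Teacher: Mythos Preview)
Your proof is correct and follows essentially the same approach as the paper: both arguments expand in the Jacobi basis, use the fractional-derivative identities to obtain the scalar relation $w_n^{0,\alpha}=f_n^{\alpha,0}/\lambda_n^{\alpha}$, and then read off the regularity from the equivalent norm \eqref{norm2} via $\lambda_n^{\alpha}\asymp n^{\alpha}$. The only cosmetic difference is that you test the weak form \eqref{uweak} directly with $v=Q_n^{\alpha,0}$ and invoke \eqref{rderivative}, whereas the paper writes the strong equation ${}_0D_t^{\alpha}u=f$ and applies \eqref{derivative} term-by-term; your route is arguably cleaner for $r\in[-\alpha,0)$ since it avoids appealing to the strong form.
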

\begin{proof}
	For $f\in H^{r}_{\omega^{\alpha,0}}(I)\cap H^{-\frac{\alpha}{2}}(I)$ with $r\geq -\alpha$, by Theorem \ref{rdata}, we have $u\in L^2_{\omega^{0,-\alpha}}(I)$, then $\omega^{0,-\alpha}u\in L^2_{\omega^{0,\alpha}}(I)$. It is legitimate to write
	\begin{equation*}\label{uf}
		u=\omega^{0,\alpha}\sum\limits_{n=0}^{\infty}u_nQ_{n}^{0, \alpha}(x), \quad f=\sum_{n=0}^{\infty} f_{n} Q_{n}^{\alpha, 0}(x).
	\end{equation*}
	By \eqref{derivative} and the equation ${}_0D^{\alpha}_t u=f$, we have
	\begin{equation*}
		{}_{0}D_t^{\alpha}u=\sum\limits_{n=0}^{\infty}u_n \lambda_n^{\alpha}Q_{n}^{\alpha, 0}(x)=\sum_{n=0}^{\infty} f_{n} Q_{n}^{\alpha, 0}(x),
	\end{equation*}

	thus $u_n=f_n/\lambda_n^{\alpha}$.  By the  norm  \eqref{norm2} and using  $\lambda_{n}^{\alpha} \approx n^{\alpha}$, it follows that
	\begin{align}
		\left\|\omega^{0,-\alpha} u\right\|_{H_{\omega^{0,\alpha}}^{\alpha+r}}^{2} & = \sum_{n=0}^{\infty}\left(u_{n}\right)^{2} h_{n}^{0,\alpha}\left(1+n^{2}\right)^{\alpha+r}=\sum_{n=0}^{\infty}\left(f_{n}/ \lambda_{n}^{\alpha}\right)^{2} h_{n}^{\alpha,0}\left(1+n^{2}\right)^{\alpha+r} \nonumber\\
		& \leq C \sum_{n=0}^{\infty}\left(f_{n}\right)^{2} h_{n}^{\alpha,0}\left(1+n^{2}\right)^{r} = C\|f\|_{H^{r}_{\omega^{\alpha,0}}}^{2}.\label{sp}
	\end{align}
	This completes the proof.
	
\end{proof}

For $\lambda>0,$ the following regularity result can be obtained by the same argument based on the bootstrapping technique used in Theorem \ref{theorem3.6}.
\begin{thm}\label{hregularity}
	For the problem \eqref{uweak} with $\lambda> 0$, if $f\in H^{r}_{\omega^{\alpha,0}}(I)\cap H^{-\frac{\alpha}{2}}(I)$ with $r\geq -\alpha$, then for any $\epsilon>0$, we have 
	\begin{equation}\label{hr}
		\|\omega^{0,-\alpha}u\|_{H^{\alpha+\min(2\alpha+1-\epsilon,r)}_{\omega^{0,\alpha}}}\leq C\|f\|_{H^{r}_{\omega^{\alpha,0}}}.
	\end{equation}
\end{thm}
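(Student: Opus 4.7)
The plan is to mimic the bootstrap used in the proof of Theorem \ref{theorem3.6}, but with the classical variational ingredients replaced by their ultra-weak analogues, namely Theorem \ref{rdata} (to launch a baseline estimate) and Theorem \ref{Tadd1} (for each single-step lift). First I would note that $H^{r}_{\omega^{\alpha,0}}(I)\hookrightarrow H^{-\alpha}_{\omega^{\alpha,0}}(I)$ for $r\geq -\alpha$, so the hypotheses of Theorem \ref{rdata} hold and produce
\begin{equation*}
\|u\|_{\omega^{0,-\alpha}}\;\leq\;C\|f\|_{H^{-\alpha}_{\omega^{\alpha,0}}}\;\leq\;C\|f\|_{H^{r}_{\omega^{\alpha,0}}},
\end{equation*}
equivalently $\omega^{0,-\alpha}u\in H^{0}_{\omega^{0,\alpha}}(I)$. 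Lemma \ref{lemmaadd} applied with $s=0$ then returns this to $u\in H^{0}_{\omega^{\alpha,0}}(I)$ with the same quantitative control, and since $L^{2}_{\omega^{0,-\alpha}}(I)\hookrightarrow L^{2}(I)\hookrightarrow H^{-\alpha/2}(I)$, one also has $u\in H^{-\alpha/2}(I)$, which is needed later to keep re-entering Theorem \ref{Tadd1}.

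Next I would recast \eqref{uweak} as the zero-reaction equation ${}_{0}D_{t}^{\alpha}u=f-\lambda u=:\hat f$. With the baseline regularity just obtained, $\hat f\in H^{\min(r,0)}_{\omega^{\alpha,0}}(I)\cap H^{-\alpha/2}(I)$ with norm controlled by $\|f\|_{H^{r}_{\omega^{\alpha,0}}}$. Applying Theorem \ref{Tadd1} to this zero-reaction problem yields the first lift $\omega^{0,-\alpha}u\in H^{\alpha+\min(r,0)}_{\omega^{0,\alpha}}(I)$. When $-\alpha\leq r\leq 0$, this already matches \eqref{hr}, since then $\min(2\alpha+1-\epsilon,r)=r$, and the argument terminates.

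For $r>0$ I would iterate the cycle: Lemma \ref{lemmaadd} transfers each new index $s$ of $\omega^{0,-\alpha}u$ in the $H^{s}_{\omega^{0,\alpha}}$ scale to the index $\min(s,2\alpha+1-\epsilon)$ of $u$ in the $H^{\bullet}_{\omega^{\alpha,0}}$ scale, thereby upgrading $\hat f$ to $H^{\min(s,2\alpha+1-\epsilon,r)}_{\omega^{\alpha,0}}(I)$, and Theorem \ref{Tadd1} then adds another $\alpha$ to the regularity index of $\omega^{0,-\alpha}u$. Letting $k$ be the smallest integer with $k\alpha\geq 2\alpha+1-\epsilon$, after at most $k$ such rounds the cap in Lemma \ref{lemmaadd} is saturated and the procedure stabilizes at the stated endpoint $\omega^{0,-\alpha}u\in H^{\alpha+\min(2\alpha+1-\epsilon,r)}_{\omega^{0,\alpha}}(I)$, with constants multiplying into a single $C$.

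The main obstacle, and the reason the proof cannot simply be copied from Theorem \ref{theorem3.6}, lies in the low-regularity range $-\alpha\leq r<0$: the $H^{\alpha/2}$ coercivity used there to start the iteration is not available, so one has to lean on the $L^{2}_{\omega^{0,-\alpha}}$ existence bound of Theorem \ref{rdata} together with the ``round trip'' supplied by Lemma \ref{lemmaadd} (converting a $\omega^{0,\alpha}$-weighted estimate of $\omega^{0,-\alpha}u$ into a $\omega^{\alpha,0}$-weighted estimate of $u$) to enter the loop. A routine check along the way is that $\hat f$ remains in $H^{-\alpha/2}(I)$ throughout the iteration; this follows from $f\in H^{-\alpha/2}(I)$ together with the successive embeddings $u\in L^{2}(I)\hookrightarrow H^{-\alpha/2}(I)$ picked up at each round.
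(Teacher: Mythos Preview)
Your proposal is correct and is precisely the argument the paper intends: it explicitly states that Theorem \ref{hregularity} follows ``by the same argument based on the bootstrapping technique used in Theorem \ref{theorem3.6},'' i.e., exactly the cycle you describe with Theorem \ref{rdata} supplying the $L^{2}_{\omega^{0,-\alpha}}$ starting point, Theorem \ref{Tadd1} replacing Theorem \ref{T2} for each $\alpha$-lift, and Lemma \ref{lemmaadd} closing the loop. Your handling of the low-regularity range $-\alpha\le r<0$ and the verification that $\hat f\in H^{-\alpha/2}(I)$ at every round are the only points where the present proof genuinely differs from that of Theorem \ref{theorem3.6}, and you have addressed both.
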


\section{Spectral Petrov-Galerkin method}\label{section5}
Now, we are in the position to consider a spectral Petrov-Galerkin method  for the FIVPs \eqref{FIVP} and present its stability  and error estimate.

We define the finite-dimensional spaces,
\begin{align}
	U_N=\{u|u=t^\alpha v, v\in V_N(I)\},\ V_N=P_N(I), \label{space}
\end{align}
where $P_N(I)$ is the set of all algebraic polynomials of degree at most $N$ in $I$.
The spectral Petrov-Galerkin method is: Given $f\in H^{r}_{\omega^{\alpha,0}}(I)\ \cap \ H^{-\frac{\alpha}{2}}(I)$ with $r\geq-\alpha$, to find $u_N\in U_N$ such that
\begin{eqnarray}\label{scheme1}\begin{aligned}
		b(u_N,v_N)=(u_N,{}_{t}D_T^{\alpha}(\omega^{\alpha,0}v_N))+\lambda(u_N,v_N)_{\omega^{\alpha,0}}=\langle f,v_N\rangle_{\omega^{\alpha,0}},\ \forall v_N\in V_N.
\end{aligned}\end{eqnarray}

Let $\pi_N^{\gamma,\beta}: L^2_{\omega^{\gamma,\beta}}(I)\rightarrow P_N(I) $, $\gamma,\ \beta>-1$, is the $L^2_{\omega^{\gamma,\beta}}(I)$-orthogonal projection
\begin{eqnarray}\label{projection}
	(\pi_N^{\gamma,\beta}u-u,v)_{\omega^{\gamma,\beta}}=0, \ \forall v\in P_N(I),
\end{eqnarray}
which can also be expressed by
\begin{eqnarray}\label{projection1}
	\pi_N^{\gamma,\beta}u(x)=\sum\limits_{n=0}^{N}\hat{u}_nQ_n^{\gamma,\beta},\ \hat{u}_n=\frac{(u,Q_n^{\gamma,\beta})_{\omega^{\gamma,\beta}}}{\|Q_n^{\gamma,\beta}\|^2_{{\omega^{\gamma,\beta}}}}.
\end{eqnarray}

To  implement the scheme \eqref{scheme1}, we need to take $u_N$, $v_N$ of the form
\begin{eqnarray}\label{form}
	u_N=t^{\alpha}\sum\limits_{n=0}^{N}u_nQ_n^{0,\alpha},\ \ \ v_N=Q_j^{\alpha,0},\ \ j=0,1,\cdots,N.
\end{eqnarray}

Before presenting our theoretical analysis, we need the following projection error estimate.

\begin{lem}[ \cite{guo1}] \label{lemma1}
	For any $v\in H_{\omega^{\gamma,\beta}}^r(I)$ and for all $0\leq r_1 \leq r$,
	\begin{eqnarray}\label{perror}
		\|\pi_N^{\gamma,\beta}v-v\|_{H^{r_1}_{\omega^{\gamma,\beta}}}\leq C(N(N+\gamma+\beta))^{\frac{r_1-r}{2}}|v|_{H^r_{\omega^{\gamma,\beta}}},
	\end{eqnarray}
	where $C$ is a generic positive constant independent of any function $v$, $N$, $\gamma$, $\beta$.
\end{lem}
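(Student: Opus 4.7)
The plan is to exploit the Jacobi-series characterization of the weighted Sobolev norm recorded in \eqref{norm2}. Write any $v\in H^{r}_{\omega^{\gamma,\beta}}(I)$ as
\begin{equation*}
v=\sum_{n=0}^{\infty} v_{n}^{\gamma,\beta}\, Q_{n}^{\gamma,\beta},\qquad v_{n}^{\gamma,\beta}=\frac{(v,Q_{n}^{\gamma,\beta})_{\omega^{\gamma,\beta}}}{h_{n}^{\gamma,\beta}},
\end{equation*}
so that by the explicit formula \eqref{projection1} the projection error is simply the tail of this series:
\begin{equation*}
v-\pi_{N}^{\gamma,\beta}v=\sum_{n=N+1}^{\infty} v_{n}^{\gamma,\beta}\, Q_{n}^{\gamma,\beta}.
\end{equation*}
Applying the equivalent norm \eqref{norm2} at smoothness level $r_1$ directly gives
\begin{equation*}
\|\pi_{N}^{\gamma,\beta}v-v\|_{H^{r_1}_{\omega^{\gamma,\beta}}}^{2}=\sum_{n=N+1}^{\infty}(v_{n}^{\gamma,\beta})^{2}\, h_{n}^{\gamma,\beta}\,(1+n^{2})^{r_1}.
\end{equation*}

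The next step is an elementary factorization. Since $r_1\leq r$, the exponent $r_1-r$ is nonpositive, so for every $n\geq N+1$,
\begin{equation*}
(1+n^{2})^{r_1}=(1+n^{2})^{r}\cdot(1+n^{2})^{r_1-r}\leq \bigl(1+(N+1)^{2}\bigr)^{r_1-r}(1+n^{2})^{r}.
\end{equation*}
Substituting back and bounding the remaining sum by the full $H^{r}_{\omega^{\gamma,\beta}}$ Jacobi expression yields
\begin{equation*}
\|\pi_{N}^{\gamma,\beta}v-v\|_{H^{r_1}_{\omega^{\gamma,\beta}}}^{2}\leq \bigl(1+(N+1)^{2}\bigr)^{r_1-r}\sum_{n=N+1}^{\infty}(v_{n}^{\gamma,\beta})^{2}\, h_{n}^{\gamma,\beta}\,(1+n^{2})^{r}.
\end{equation*}
Since $(1+(N+1)^{2})\asymp N(N+\gamma+\beta)$ for fixed $\gamma,\beta>-1$ and $N\geq 1$, the prefactor matches $(N(N+\gamma+\beta))^{r_1-r}$ up to a constant.

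The remaining task, and the only real obstacle, is to replace the tail of the full-norm expansion on the right-hand side by the seminorm $|v|^{2}_{H^{r}_{\omega^{\gamma,\beta}}}$. The key observation is that, modulo constants, $\sum_{n\geq N+1}(v_{n}^{\gamma,\beta})^{2}h_{n}^{\gamma,\beta}(1+n^{2})^{r}$ only involves the \emph{high-frequency} Jacobi modes, on which the low-order terms in the full norm $\|\cdot\|^{2}_{H^{r}_{\omega^{\gamma,\beta}}}=\sum_{k=0}^{\lfloor r\rfloor}\|D^{k}v\|^{2}_{\omega^{\gamma+k,\beta+k}}+|v|^{2}_{H^{r}_{\omega^{\gamma,\beta}}}$ (for non-integer $r$) contribute only up to polynomial-of-degree-$N$ components that are annihilated by the tail. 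For integer $r=m$ this follows from the classical Jacobi derivative identity $DQ_{n}^{\gamma,\beta}=\tfrac{1}{2}(n+\gamma+\beta+1)Q_{n-1}^{\gamma+1,\beta+1}$ together with orthogonality, which shows $(1+n^{2})^{m}h_{n}^{\gamma,\beta}(v_{n}^{\gamma,\beta})^{2}\lesssim \|D^{m}v\|^{2}_{\omega^{\gamma+m,\beta+m}}$ after summation; the non-integer case then follows by the $K$-method of interpolation used to define $H^{r}_{\omega^{\gamma,\beta}}(I)$. Combining this with the estimate above produces the claimed inequality
\begin{equation*}
\|\pi_{N}^{\gamma,\beta}v-v\|_{H^{r_1}_{\omega^{\gamma,\beta}}}\leq C\,\bigl(N(N+\gamma+\beta)\bigr)^{(r_1-r)/2}\,|v|_{H^{r}_{\omega^{\gamma,\beta}}},
\end{equation*}
with $C$ independent of $v$ and $N$; this is precisely the content of the cited result in \cite{guo1}.
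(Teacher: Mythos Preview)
The paper does not supply a proof of this lemma; it is quoted verbatim from \cite{guo1}. So there is no ``paper's proof'' to compare against, and your argument should be judged on its own.

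Your route---expand $v$ in the Jacobi basis, identify $v-\pi_N^{\gamma,\beta}v$ with the tail $n>N$, apply the series norm \eqref{norm2}, and pull out the factor $(1+n^2)^{r_1-r}$---is exactly the standard mechanism behind this estimate and is structurally sound. Two points, however, deserve attention.

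First, the lemma as stated asserts that $C$ is independent of $\gamma$ and $\beta$, and your argument does not deliver this. You invoke the equivalence \eqref{norm2}, whose implicit constants already depend on $\gamma,\beta$, and then write $(1+(N+1)^2)\asymp N(N+\gamma+\beta)$ ``for fixed $\gamma,\beta$''; your closing sentence only claims $C$ independent of $v$ and $N$. To obtain the precise prefactor $(N(N+\gamma+\beta))^{(r_1-r)/2}$ with a parameter-free constant one has to bypass \eqref{norm2} and work directly with the Jacobi derivative formula $D^{k}Q_n^{\gamma,\beta}=d_{n,k}^{\gamma,\beta}Q_{n-k}^{\gamma+k,\beta+k}$ and the exact relation $d_{n,k}^{\gamma,\beta\,2}\,h_{n-k}^{\gamma+k,\beta+k}=h_n^{\gamma,\beta}\prod_{j=0}^{k-1}(n-j)(n+j+\gamma+\beta+1)$, which is how \cite{guo1} gets the factor $N(N+\gamma+\beta)$ rather than $1+N^2$. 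For the uses made of the lemma in this paper ($\gamma,\beta$ fixed at $\alpha,0$) your version is of course sufficient.

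Second, the passage from the tail of the full norm to the seminorm $|v|_{H^r_{\omega^{\gamma,\beta}}}$ is only sketched. For integer $r$ your outline via the derivative identity is correct; for non-integer $r$ appealing to ``interpolation'' is plausible but not automatic, since you are interpolating an inequality whose right-hand side is a \emph{seminorm}, not a norm. In the original reference this step is again handled by the explicit Jacobi calculus (the seminorm has its own exact series representation), which avoids the interpolation subtlety altogether.
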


\begin{lem}[\cite{hao1}]\label{lemma2}
	For any $\omega^{-\gamma,-\beta}v\in H_{\omega^{\gamma,\beta}}^r(I)$ with $0\leq r \leq N$,
	\begin{eqnarray}\label{perror}
		\|\omega^{\gamma,\beta}\pi_N^{\gamma,\beta}(\omega^{-\gamma,-\beta}v)-v\|_{{\omega^{-\gamma,-\beta}}}\leq CN^{-r}|\omega^{-\gamma,-\beta}v|_{H^r_{\omega^{\gamma,\beta}}},
	\end{eqnarray}
	where $C$ is a generic positive constant independent of any function $v$, $N$, $\gamma$, $\beta$.
\end{lem}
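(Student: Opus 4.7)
The plan is to reduce the stated weighted $L^2$ estimate to the standard orthogonal projection estimate in Lemma \ref{lemma1}. Set $u := \omega^{-\gamma,-\beta} v$, so that by hypothesis $u \in H^r_{\omega^{\gamma,\beta}}(I)$ and $v = \omega^{\gamma,\beta} u$; the quantity to be estimated then rewrites as
$$
E := \|\omega^{\gamma,\beta}\pi_N^{\gamma,\beta}(\omega^{-\gamma,-\beta} v) - v\|_{\omega^{-\gamma,-\beta}} = \|\omega^{\gamma,\beta}(\pi_N^{\gamma,\beta} u - u)\|_{\omega^{-\gamma,-\beta}}.
$$

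The key observation is an elementary weight cancellation: since $(\omega^{\gamma,\beta})^{2}\,\omega^{-\gamma,-\beta} = \omega^{\gamma,\beta}$ pointwise on $I$, for any measurable $w$ one has
$$
\|\omega^{\gamma,\beta} w\|_{\omega^{-\gamma,-\beta}}^{2} = \int_I (\omega^{\gamma,\beta})^{2}\, w^{2}\, \omega^{-\gamma,-\beta}\, dx = \int_I \omega^{\gamma,\beta}\, w^{2}\, dx = \|w\|_{\omega^{\gamma,\beta}}^{2}.
$$
Applying this with $w = \pi_N^{\gamma,\beta} u - u$ converts the target into a bound on the plain weighted $L^2$ projection error $\|\pi_N^{\gamma,\beta} u - u\|_{\omega^{\gamma,\beta}}$, which is precisely what Lemma \ref{lemma1} is designed to handle.

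Invoking Lemma \ref{lemma1} with $r_1 = 0$ yields
$$
\|\pi_N^{\gamma,\beta} u - u\|_{\omega^{\gamma,\beta}} \leq C\bigl(N(N+\gamma+\beta)\bigr)^{-r/2}\, |u|_{H^r_{\omega^{\gamma,\beta}}}.
$$
Since $\gamma,\beta > -1$ are fixed, one has $N + \gamma + \beta \geq c N$ for all sufficiently large $N$, hence $\bigl(N(N+\gamma+\beta)\bigr)^{-r/2} \leq C N^{-r}$ (any finitely many small-$N$ exceptions being absorbed into the generic constant). Combining this with the weight-cancellation identity and substituting back $u = \omega^{-\gamma,-\beta} v$ gives exactly the claimed bound.

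I do not expect any serious obstacle; the argument is essentially algebraic. The only step that requires care is the bookkeeping of weight exponents when squaring $\omega^{\gamma,\beta} w$ inside a norm carrying the \emph{inverse} weight $\omega^{-\gamma,-\beta}$, and it is precisely this cancellation that collapses the apparently awkward expression to a standard projection error. The hypothesis $0 \leq r \leq N$ plays no essential role in the derivation beyond ensuring that the right-hand side is meaningful; the whole estimate is driven by weight algebra together with a direct appeal to Lemma \ref{lemma1}.
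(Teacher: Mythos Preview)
Your argument is correct. The paper does not supply its own proof of this lemma---it is simply quoted from \cite{hao1}---so there is nothing to compare against; your reduction via the weight identity $\|\omega^{\gamma,\beta}w\|_{\omega^{-\gamma,-\beta}}=\|w\|_{\omega^{\gamma,\beta}}$ followed by Lemma~\ref{lemma1} with $r_1=0$ is the natural (and standard) route.

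One small remark on bookkeeping: you justify $(N(N+\gamma+\beta))^{-r/2}\le CN^{-r}$ by saying ``$\gamma,\beta>-1$ are fixed,'' which would make the constant depend on $\gamma,\beta$, contrary to the stated claim. This is easily repaired uniformly: since $\gamma,\beta>-1$ forces $\gamma+\beta>-2$, one has $N+\gamma+\beta>N-2\ge N/3$ for all $N\ge3$, hence $(N(N+\gamma+\beta))^{-r/2}\le 3^{r/2}N^{-r}$ with a constant depending only on $r$ (which the lemma does not forbid).
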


\begin{thm}\label{stability}
	For $f \in H_{\omega^{\alpha, 0}}^{r}(I)\cap H^{-\frac{\alpha}{2}}$ with $r \geq -\alpha,$  there exists a unique solution $u_{N}$  to  \eqref{scheme1} such that for  sufficiently large $N$
	\begin{equation*}\label{stability1}
		\left\|u_{N}\right\|_{{\omega^{0,-\alpha}}} \leq C\|f\|_{H_{\omega^{\alpha,0}}^{-\alpha}}.
	\end{equation*}
	Moreover, suppose u solves $\eqref{uweak},$ then we have the  error estimate
	\begin{equation*}\label{conver}
		\left\|u-u_{N}\right\|_{{\omega^{0,-\alpha}}} \leq C N^{-m}\|f\|_{H_{\omega^{\alpha,0}}^{r}},
	\end{equation*}
	where $m$ is the regularity index of $\omega^{0,-\alpha}u$ in $H_{\omega^{0,\alpha}}^{m}(I)$.
\end{thm}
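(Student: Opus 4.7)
The plan is to establish the result in three standard steps: (i) verify a discrete inf-sup condition for $b(\cdot,\cdot)$ on $U_N\times V_N$, which by the Babu\v{s}ka--Aziz theorem yields existence, uniqueness, and the stability bound; (ii) combine Galerkin consistency with the discrete inf-sup to obtain a C\'ea-type quasi-optimality in the $L^2_{\omega^{0,-\alpha}}$-norm; and (iii) insert a carefully chosen best approximation, bounded via Lemma \ref{lemma2} and the regularity estimate of Theorem \ref{hregularity}.

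For step (i), I would mimic the continuous inf-sup argument used inside the proof of Theorem \ref{rdata}. Given $u_N\in U_N$, the function $g:=\omega^{0,-\alpha}u_N\in P_N\subset L^2_{\omega^{0,\alpha}}(I)\cap H^{-\alpha/2}(I)$ is a polynomial, so Lemma \ref{awell} produces $\hat v\in H^\alpha_{\omega^{\alpha,0}}(I)$ with $b(u_N,\hat v)=\|u_N\|^2_{\omega^{0,-\alpha}}$ and $\|\hat v\|_{H^\alpha_{\omega^{\alpha,0}}}\leq C\|u_N\|_{\omega^{0,-\alpha}}$. Crucially, because $g$ is smooth, an adjoint version of the bootstrapping technique behind Theorem \ref{hregularity} shows that $\hat v\in H^{\alpha+\delta}_{\omega^{\alpha,0}}(I)$ for some fixed $\delta>0$ independent of $N$, with norm again controlled by $\|u_N\|_{\omega^{0,-\alpha}}$. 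Setting $v_N^\ast:=\pi_N^{\alpha,0}\hat v\in V_N$ and invoking the continuity estimate $|b(w,v)|\leq C\|w\|_{\omega^{0,-\alpha}}\|v\|_{H^\alpha_{\omega^{\alpha,0}}}$ proved inside Theorem \ref{rdata} together with Lemma \ref{lemma1},
\begin{equation*}
b(u_N,v_N^\ast)=b(u_N,\hat v)-b(u_N,\hat v-v_N^\ast)\geq \|u_N\|^2_{\omega^{0,-\alpha}}-CN^{-\delta}\|u_N\|^2_{\omega^{0,-\alpha}}.
\end{equation*}
For $N$ large this exceeds $\tfrac12\|u_N\|^2_{\omega^{0,-\alpha}}$, and together with $\|v_N^\ast\|_{H^\alpha_{\omega^{\alpha,0}}}\leq C\|u_N\|_{\omega^{0,-\alpha}}$ this yields the discrete inf-sup condition. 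Since $\dim U_N=\dim V_N=N+1$, injectivity implies surjectivity, so Babu\v{s}ka--Aziz delivers unique solvability and the stability bound $\|u_N\|_{\omega^{0,-\alpha}}\leq C\|f\|_{H^{-\alpha}_{\omega^{\alpha,0}}}$ via $|\langle f,v_N\rangle_{\omega^{\alpha,0}}|\leq\|f\|_{H^{-\alpha}_{\omega^{\alpha,0}}}\|v_N\|_{H^\alpha_{\omega^{\alpha,0}}}$.

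For steps (ii) and (iii), Galerkin consistency $b(u-u_N,v_N)=0$ for all $v_N\in V_N$ combined with the discrete inf-sup yields the standard C\'ea estimate $\|u-u_N\|_{\omega^{0,-\alpha}}\leq C\inf_{w_N\in U_N}\|u-w_N\|_{\omega^{0,-\alpha}}$. Choosing $w_N=\omega^{0,\alpha}\pi_N^{0,\alpha}(\omega^{0,-\alpha}u)\in U_N$ and applying Lemma \ref{lemma2} with $(\gamma,\beta)=(0,\alpha)$ gives $\|u-w_N\|_{\omega^{0,-\alpha}}\leq CN^{-m}|\omega^{0,-\alpha}u|_{H^m_{\omega^{0,\alpha}}}$, while Theorem \ref{hregularity} bounds the right-hand side by $C\|f\|_{H^r_{\omega^{\alpha,0}}}$ for any $m\leq\alpha+\min(2\alpha+1-\epsilon,r)$, completing the argument.

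The main obstacle is the discrete inf-sup for $\lambda>0$. The natural explicit choice $v_N^{\#}=\sum_{n=0}^N(u_n/\lambda_n^\alpha)Q_n^{\alpha,0}$, with $u_N=t^\alpha\sum u_nQ_n^{0,\alpha}$, makes the principal term $(u_N,{}_tD_T^\alpha(\omega^{\alpha,0}v_N^{\#}))$ equal exactly $\|u_N\|^2_{\omega^{0,-\alpha}}$ by \eqref{rderivative} together with Jacobi orthogonality, but a Cauchy--Schwarz bound shows $|\lambda(u_N,v_N^{\#})_{\omega^{\alpha,0}}|$ can be of order $\lambda\|u_N\|^2_{\omega^{0,-\alpha}}$, which cannot be absorbed for generic $\lambda>0$. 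This is precisely what forces the argument through the adjoint problem plus spectral projection, exploiting the extra regularity of $\hat v$ gained from polynomial data to make the projection error vanish with $N$.
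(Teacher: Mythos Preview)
Your steps (ii) and (iii) are fine and match the paper. The gap is in step (i), specifically the claim that the adjoint bootstrapping gives $\hat v\in H^{\alpha+\delta}_{\omega^{\alpha,0}}(I)$ \emph{with norm controlled by $\|u_N\|_{\omega^{0,-\alpha}}$}. The adjoint analogue of Theorem~\ref{hregularity} reads $\|\hat v\|_{H^{\alpha+\min(2\alpha+1-\epsilon,r)}_{\omega^{\alpha,0}}}\leq C\|g\|_{H^r_{\omega^{0,\alpha}}}$; to extract any $\delta>0$ you must take $r=\delta>0$ on the right. But $g=\omega^{0,-\alpha}u_N$ is a polynomial of degree $N$, so an inverse inequality gives only $\|g\|_{H^\delta_{\omega^{0,\alpha}}}\leq CN^{\delta}\|g\|_{L^2_{\omega^{0,\alpha}}}=CN^{\delta}\|u_N\|_{\omega^{0,-\alpha}}$. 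The factor $N^{\delta}$ exactly cancels the $N^{-\delta}$ from Lemma~\ref{lemma1}, and your bound on $b(u_N,\hat v-v_N^\ast)$ does not decay. Taking $r=0$ instead yields only $\hat v\in H^\alpha_{\omega^{\alpha,0}}$, which gives no smallness for $\|\hat v-\pi_N^{\alpha,0}\hat v\|_{H^\alpha_{\omega^{\alpha,0}}}$.

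The paper avoids this by an algebraic identity you overlooked: since $u_N\in U_N$ and ${}_tD_T^\alpha(\omega^{\alpha,0}Q_n^{\alpha,0})=\lambda_n^\alpha\,Q_n^{0,\alpha}$ by \eqref{rderivative}, the principal part of $b$ satisfies $(u_N,{}_tD_T^\alpha(\omega^{\alpha,0}(v-\pi_N^{\alpha,0}v)))=0$ for \emph{every} $v\in H^\alpha_{\omega^{\alpha,0}}(I)$. Hence $b(u_N,v-\pi_N^{\alpha,0}v)=\lambda(u_N,v-\pi_N^{\alpha,0}v)_{\omega^{\alpha,0}}$, and one only needs the $L^2_{\omega^{\alpha,0}}$ projection error, which by Lemma~\ref{lemma1} is $O(N^{-\alpha})\|v\|_{H^\alpha_{\omega^{\alpha,0}}}$ with no regularity beyond $H^\alpha$ required. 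Plugging this into the continuous inf-sup \eqref{infsup} and bounding $\|\pi_N^{\alpha,0}v\|_{H^\alpha_{\omega^{\alpha,0}}}\leq\|v\|_{H^\alpha_{\omega^{\alpha,0}}}$ gives the discrete inf-sup with constant $C^{-1}-c\lambda N^{-\alpha}$ directly. This is precisely the mechanism that your ``explicit choice $v_N^{\#}$'' paragraph is groping towards: the point is not to bound the reaction term by Cauchy--Schwarz against $\|v_N^{\#}\|$, but to recognize that the reaction term is the \emph{only} residual and is lower order because $\|v-\pi_N^{\alpha,0}v\|_{\omega^{\alpha,0}}$ decays.
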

\begin{proof}
	We first prove the well-posedness of the discrete problem \eqref{scheme1}. Note that by \eqref{rderivative} and definition \eqref{projection1} of projection $\pi_N^{\alpha,0}$,
	\begin{equation}\label{xingzhi}
		b(u_N,v-\pi_N^{\alpha,0}v)=\lambda(u_N,v-\pi_N^{\alpha,0}v)_{\omega^{\alpha,0}}.
	\end{equation}
	For $0\not=u_N \in U_N$, by \eqref{infsup}, we have
	\begin{equation}\label{chaifen}
		\frac{1}{C}\|u_N \|_{{\omega^{0,-\alpha}}}\leq \sup\limits_{0\not=v\in H_{\omega^{\alpha, 0}}^{\alpha}}\frac{b(u_N,v)}{\|v\|_{H_{\omega^{\alpha, 0}}^{\alpha}}}= \sup\limits_{0\not=v\in H_{\omega^{\alpha, 0}}^{\alpha}}\frac{b(u_N,v-\pi_N^{\alpha,0}v)}{\|v\|_{H_{\omega^{\alpha, 0}}^{\alpha}}}+\sup\limits_{0\not=v\in H_{\omega^{\alpha, 0}}^{\alpha}}\frac{b(u_N,\pi_N^{\alpha,0}v)}{\|v\|_{H_{\omega^{\alpha, 0}}^{\alpha}}}
	\end{equation}
	By the Cauchy-Schwarz inequality and Lemma \ref{lemma1}, taking $r_1=0$, $r=\alpha$, we obtain
	\begin{equation}\label{i}
		\sup\limits_{0\not=v\in H_{\omega^{\alpha, 0}}^{\alpha}}\frac{\lambda(u_N,v-\pi_N^{\alpha,0}v)_{\omega^{\alpha,0}}}{\|v\|_{H_{\omega^{\alpha, 0}}^{\alpha}}}\leq \sup\limits_{0\not=v\in H_{\omega^{\alpha, 0}}^{\alpha}}\frac{\lambda\|u_N\|_{{\omega^{\alpha,0}}}\|v-\pi_N^{\alpha,0}v\|_{\omega^{\alpha,0}}}{\|v\|_{H_{\omega^{\alpha, 0}}^{\alpha}}}\leq c\lambda N^{-\alpha}\|u_N\|_{{\omega^{0,-\alpha}}}.
	\end{equation}
	
	Using the equivalent norm \eqref{norm2} we have
	\begin{equation}\label{ii}
		\sup\limits_{0\not=v\in H_{\omega^{\alpha, 0}}^{\alpha}}\frac{b(u_N,\pi_N^{\alpha,0}v)}{\|v\|_{H_{\omega^{\alpha, 0}}^{\alpha}}} \leq \sup\limits_{0\not=v\in H_{\omega^{\alpha, 0}}^{\alpha}}\frac{b(u_N,\pi_N^{\alpha,0}v)}{\|\pi_N^{\alpha,0}v\|_{H_{\omega^{\alpha, 0}}^{\alpha}}}=\sup\limits_{0\not=v_N\in V_N}\frac{b(u_N,v_N)}{\|v_N\|_{H_{\omega^{\alpha, 0}}^{\alpha}}}
	\end{equation}
	By \eqref{xingzhi}, substituting \eqref{i} and \eqref{ii}  into \eqref{chaifen}, it follows that
	\begin{equation}\label{zero}
		\sup\limits_{0\not=v_N\in V_N}\frac{b(u_N,v_N)}{\|v_N\|_{H_{\omega^{\alpha, 0}}^{\alpha}}} \geq (\frac{1}{C}-c \lambda N^{-\alpha})\left\|u_{N}\right\|_{{\omega^{0,-\alpha}}}, \quad \forall 0 \neq u_{N} \in U_{N}.
	\end{equation}
	For sufficiently large $N$, the inf-sup condition holds, which  leads to the desired
	conclusion.
	
	Next we present the error estimate of the spectral Petrov-Galerkin method.
	For any $\phi_{N} \in U_{N}$, we have
	\begin{equation*}
		\left\|u-u_{N}\right\|_{{\omega^{0,-\alpha}}} \leq\left\|u-\phi_{N}\right\|_{{\omega^{0,-\alpha}}}+\left\|u_{N}-\phi_{N}\right\|_{{\omega^{0,-\alpha}}},\ \forall\ \phi_{N} \in U_{N}
	\end{equation*}
	By \eqref{zero} and the fact that for any $v_N\in V_{N}$, $b\left(u_{N}-u, v_N\right)=0$,
	\begin{equation*}
		\left\|u_{N}-\phi_{N}\right\|_{{\omega^{0,-\alpha}}} \leq C \sup _{0 \neq v_N \in V_{N}} \frac{b\left(u_{N}-\phi_{N}, v_N\right)}{\left\|v_N\right\|_{H^{\alpha}_{\omega^{\alpha,0}}}}=C \sup _{0 \neq v_N \in V_{N}} \frac{b\left(u-\phi_{N}, v_N\right)}{\left\|v_N\right\|_{H^{\alpha}_{\omega^{\alpha,0}}}} \leq C\left\|u-\phi_{N}\right\|_{{\omega^{0,-\alpha}}}
	\end{equation*}
	Taking $\phi_N=\omega^{0,\alpha}\pi_N^{0,\alpha}(\omega^{0,-\alpha}u)$, by \eqref{sp}, \eqref{hr} and Lemma \ref{lemma2}, we obtain
	\begin{equation*}
		\left\|u-u_{N}\right\|_{{\omega^{0,-\alpha}}} \leq CN^{-m}|\omega^{0,-\alpha}u|_{H_{\omega^{0,\alpha}}^{m}} \leq CN^{-m}\|f\|_{H_{\omega^{\alpha, 0}}^{r}}.
	\end{equation*}

\end{proof}

\section{Numerical example}\label{section6}
In this section, we firstly present a fast iteration algorithm for the linear system produced by the spectral Petrov-Galerkin method,  which is based on  the fast polynomial transform and allows quasilinear computational cost  $O(N\log^2N)$ and linear storage  $O(N)$.  Secondly, we provide three numerical examples to verify the theoretical findings. In Example \ref{ex1}, the smooth source term $f$  is adopted,  and  $f$ with a weak singularity at an interior  in Example \ref{ex2},  $f$ with weak singularity at the origin in Example \ref{ex3}. It is shown from the data that  the numerical results are consistent with the conclusions in Theorem \ref{stability}, Theorem \ref{theorem3.6} and Remark  \ref{re}, correspondingly.  Moreover, a time fractional diffusion problem with the operator $A=-\Delta$ in \eqref{eq:TFDE} is considered in Example \ref{exadd} to verify that our theoretical analysis and numerical method are valid for some time-fractional differential  problems.

In the computation, we take $\lambda=1$. Since exact solutions are unavailable, we  measure the errors  in the following sense:
\begin{equation*}
	E_N=\frac{\|u_N-u_{ref}\|_{\omega^{0,-\alpha}}}{\|u_{ref}\|_{\omega^{0,-\alpha}}},
\end{equation*}
where  $u_{ref}$ is the reference solution computed by the same solver but with a very fine resolution, $u_{ref}:=u_{1024}$.

\subsection{Numerical implementation}\label{sub6}
In this part, we describe the numerical implementation of the spectral Petrov-Galerkin method and present a fast iterative solver inspired by the related discussion in\cite{town} and \cite{hao2}.

Substituting \eqref{form} into the spectral Petrov-Galerkin scheme \eqref{scheme1} and using properties of Jacobi polynomial, we observe
\begin{equation}\label{mm}
	AU=F,
\end{equation}
where $U=(u_0,u_1,\cdots,u_N)^{T}$, $F=(f_0,f_1,\cdots,f_N)^{T}$ with $f_k=(f,Q^{\alpha,0}_k)_{\omega^{\alpha,0}}$, $k=0,1,\cdots,N$. Here $A=S+\lambda M$, where $S$ is a diagonal matrix
$$S=\text{diag}(\lambda_0^{\alpha}h_{0}^{\alpha,0}, \lambda_1^{\alpha}h_{1}^{\alpha,0}, \cdots, \lambda_N^{\alpha}h_{N}^{\alpha,0})^{T}$$
and $M$ is a dense matrix with the following  entries
$$M_{k,n}=\int_0^T \omega^{\alpha,\alpha} Q_n^{0,\alpha}(x)Q_k^{\alpha,0}(x)dx, \ k,n=0,1,\cdots,N.$$
To solve \eqref{mm} directly, the Gauss-Jacobi quadrature rules can be employ to obtain $f_k$ and $M_{k,n}$.  As the system is dense, a direct solver requires $O(N^2)$ storage and $O(N^3)$ computational complexity.

Based on the analysis in the previous sections,  the convergence order and accuracy of the numerical solution are relatively low with rough right-hand function and small $\alpha$.  In this case, we need to take large truncation number  $N$  to improve the accuracy of the numerical solution, which will increase the computational cost of the direct solver significantly.

To overcome this drawback, following the idea in \cite{hao2} we use the fixed-point iteration
\begin{equation}\label{t-iter}
	U^{m+1}=U^{m}+P^{-1}(F-AU^m),
\end{equation}
where the preconditioner $P=S+\lambda Q$ is a diagonal matrix with $Q=\text{diag}(h_0^{\alpha,\alpha}, h_1^{\alpha,\alpha},\cdots, h_N^{\alpha,\alpha})$ and the initial guess can be chosen as the numerical solution obtained by a direct method with $N=8$.  The iterations end when the maximum iteration number $100$ is reached or the condition $\| U^{m+1}- U^{m}\|_{2}/\| U^{m+1}\|_{2}<\epsilon$ is met, where we take $\epsilon=10^{-7}$ and $\|\,\cdot\,\|_{2}$ denotes the classical Euclidean norm.   To contain the information of the reaction term,  $Q$ is introduced  in the current preconditioner  to replace the identity matrix used in the existing literature. The results in Table \ref{ss} indicate that the new preconditioner can gurantee that iteration numbers are independent of $\alpha$ and $N$.

In each iteration, we compute the matrix-vector product  without forming a matrix by applying the fast polynomial transform and the fast matrix-vector product for Toeplitz-dot-Hankel matrix \cite{town}.
Let $Q^{\gamma,\beta}=(Q_0^{\gamma,\beta},Q_1^{\gamma,\beta},\cdots,Q_N^{\gamma,\beta})^T$, $C^{\gamma\rightarrow\sigma,\beta}$ and $C^{\sigma,\delta\rightarrow\beta}$ are lower triangular matrices with constant entries $$(C^{\gamma\rightarrow\sigma,\beta})_{n,k}=c_{n,k}^{\gamma\rightarrow\sigma,\beta},\ (C^{\sigma,\delta\rightarrow\beta})_{n,k}=c_{n,k}^{\sigma,\delta\rightarrow\beta},\ n,k=0,1\cdots,N,$$
where $c_{n,k}^{\gamma\rightarrow\sigma,\beta}$ and $c_{n,k}^{\sigma,\delta\rightarrow\beta}$  satisfy  \cite{askey1}
\begin{align}\label{expan}
	Q_n^{\gamma,\beta}(x)=\sum_{k=0}^{n}c_{n,k}^{\gamma\rightarrow\sigma,\beta}Q_k^{\sigma,\beta}(x)\  \mbox{and} \ Q_n^{\sigma,\delta}(x)=\sum_{k=0}^{n}c_{n,k}^{\sigma,\delta\rightarrow\beta}Q_k^{\sigma,\beta}(x).
\end{align}
That is, $Q^{\gamma,\beta}=C^{\gamma\rightarrow\sigma,\beta}Q^{\sigma,\beta},\ Q^{\sigma,\delta}=C^{\sigma,\delta\rightarrow\beta}Q^{\sigma,\beta}$.  According to the above representation, we have
\begin{eqnarray*}\begin{aligned}
		M&=\int_0^T \omega^{\alpha,\alpha} Q^{\alpha,0}(Q^{0,\alpha})^T dx\\
		&=\int_0^T \omega^{\alpha,\alpha}  C^{\alpha,0\rightarrow\alpha}Q^{\alpha,\alpha}(C^{0\rightarrow\alpha,\alpha}Q^{\alpha,\alpha})^Tdx\\
		&=\int_0^T \omega^{\alpha,\alpha}C^{\alpha,0\rightarrow\alpha}Q^{\alpha,\alpha}(Q^{\alpha,\alpha})^T(C^{0\rightarrow\alpha,\alpha})^Tdx\\
		&=C^{\alpha,0\rightarrow\alpha}H^{\alpha}(C^{0\rightarrow\alpha,\alpha})^T,
\end{aligned}\end{eqnarray*}
where $H^{\alpha}=\text{diag}(h_0^{\alpha,\alpha},h_1^{\alpha,\alpha},\cdots,h_N^{\alpha,\alpha})$. Denote $u_N=t^{\alpha}\hat{u}_N$, by using \eqref{expan} we obtain
\begin{align}
	\hat{u}_N=\sum\limits_{n=0}^{N}u_nQ_n^{0,\alpha}(x)=(Q^{0,\alpha})^TU=(Q^{\alpha,\alpha})^T(C^{0\rightarrow\alpha,\alpha})^TU:=(Q^{\alpha,\alpha})^TU^{\alpha,\alpha}=\sum\limits_{n=0}^{N}u_n^{\alpha,\alpha}Q_n^{\alpha,\alpha}(x)\label{UU}
\end{align}
where $U^{\alpha,\alpha}=(u_0^{\alpha,\alpha},u_1^{\alpha,\alpha},\cdots,u_N^{\alpha,\alpha})^T$ can be observed by the fast polynomial transform \cite{town}.
Note that from \cite{askey1}

$$c_{n,k}^{\alpha,0\rightarrow\alpha}=\frac{(-1)^{n-k}(2k+2\alpha+1)\Gamma(k+2\alpha+1)\Gamma(n-k-\alpha)\Gamma(n+k+\alpha+1)}{\Gamma(-\alpha)\Gamma(k+\alpha+1)\Gamma(n-k+1)\Gamma(n+k+2\alpha+2)}.$$

Thus, for the connection coefficients matrix $C^{\alpha,0\rightarrow\alpha}$ we can write
\begin{align}
	C^{\alpha,0\rightarrow\alpha}=(T\circ H) D,\label{M}
\end{align}
where $D$ is diagonal matrix, $T$ is a  Toeplitz matrix, $H$ is a Hankel matrix and `$\circ$' is the Hadamard matrix product, i.e., entrywise multiplication between two matrices. Specifically, for $0\leq k\leq n\leq N$,
\begin{align}
	(D)_{k,k}&=\frac{(2k+2\alpha+1)\Gamma(k+2\alpha+1)}{\Gamma(-\alpha)\Gamma(k+\alpha+1)},\nonumber\\
	(T)_{n,k}&=(-1)^{n-k}\frac{\Gamma(n-k-\alpha)}{\Gamma(n-k+1)},\ (H)_{n,k}=\frac{\Gamma(n+k+\alpha+1)}{\Gamma(n+k+2\alpha+2)}.\nonumber
\end{align}
Using \eqref{UU} and \eqref{M}, it follows that
$$MU=C^{\alpha,0\rightarrow\alpha}H^{\alpha}(C^{0\rightarrow\alpha,\alpha})^TU=C^{\alpha,0\rightarrow\alpha}H^{\alpha}U^{\alpha,\alpha}=(T\circ H)DH^{\alpha}U^{\alpha,\alpha}:=(T\circ H)v.$$
Now we are in the position to use the pivoted Cholesky algorithm approximating the Hankel matrix $H$  by a low rank matrix  and  the fast Toeplitz matrix-vector product.  As shown in \cite{town}, the present fast iteration solver allows the  quasilinear computational cost  $O(N\log^2N)$ and the linear storage  $O(N)$.

\subsection{Numerical results}
In this part we show some examples where different regularities of $f$ are considered.
\begin{ex}\label{ex1} Take $T=1$, $f=sin(t-\frac{T}{2})$ in \eqref{FIVP}. Note that $f\in H^{\infty}_{\omega^{\alpha,0}}(I)$.
\end{ex}

By Theorem \ref{theorem3.6}, as the source term $f$ is analytic, we have $\omega^{0,-\alpha}u \in H^{3\alpha+1-\epsilon}_{\omega^{0,\alpha}}(I)$. From Theorem \ref{conver}, the convergence order is expected to be $3\alpha+1-\epsilon$.  In Table \ref{Table1}, we test convergence orders by a direct solver \eqref{mm}. In Table \ref{ss}, we check the performance of the fast iteration solver with the reference solution  $u_{ref}=u_{2^{14}}$. We tabulate the convergence orders and relative errors of numerical solutions of the spectral Petrov-Galerkin method in $L^2_{\omega^{0,-\alpha}}$-norm for different values of $\alpha$,  
from which the numerical results confirm our theoretical findings in Theorems \ref{theorem3.6} and  \ref{conver}.  The number of iterations is not greater than 10  and independent of $\alpha$ and $N$.

Moreover, we list the convergence orders and relative errors of our numerical solutions in standard $L^2$-norm in Table \ref{Table1.1}. It is shown  that the convergence order and accuracy of the numerical solutions in $L^2$-norm are higher than its in weighted  $L^2$-norm.

\begin{table}[htbp]
	\scriptsize
	\caption{Convergence orders and errors of the spectral Petrov-Galerkin method for Example 6.1 with $f=sin(t-\frac{T}{2})$. The expected convergence order is $3\alpha+1-\epsilon$ in the $L^2_{\omega^{0,-\alpha}}$-norm (Theorem \ref{theorem3.6} and Theorem \ref{stability}).}
	\label{Table1}
	\centering
	\begin{tabular}{|c|cc| cc|cc|cc|}
		\hline
		\multirow{2}{*}{$N$} & \multicolumn{2}{c|}{$\alpha$=0.2} &  \multicolumn{2}{c|}{$\alpha$=0.4} &  \multicolumn{2}{c|}{$\alpha$=0.6} & \multicolumn{2}{c|}{$\alpha$=0.8}\\
		\cline{2-9}
		&$E_N$ &  rate&   $E_N$ &  rate   &$E_N$ &  rate  & $E_N$ &  rate \\
		\hline
		
		32       &  1.29e-03   &    *       &      2.92e-04&    *     &  3.49e-05     &    *       &      2.60e-06&      *            \\
		64       &   4.70e-04  &    1.46    &     6.72e-05   &    2.12  &   5.25e-06   &    2.73    &  2.63e-07   &    3.31        \\
		128       &   1.67e-04   &    1.50   &   1.50e-05   &   2.16  &    7.74e-07  &    2.76     &  2.58e-08    &    3.35     \\
		256       &   5.75e-05  &    1.53    &   3.32e-06    &    2.18  &      1.13e-07&    2.78    &   2.48e-09  &    3.37      \\
		512      &    1.87e-05  &    1.62    &   7.12e-07  &    2.22  &     1.61e-08&    2.81   &  2.36e-10 &    3.39       \\
		\hline
		Expected order&  & 1.6 & &2.2 & &2.8 & & 3.4 \\
		\hline
	\end{tabular}
\end{table}

\begin{table}[htbp]
	\scriptsize
	\caption{Tests of the proposed fast iterative solver in convergence and computational time for Example 6.1 with $f=sin(t-\frac{T}{2})$. The estimated convergence order is $3 \alpha+1$ in the $L^2_{\omega^{0,-\alpha}}$-norm. Here `iter' represents the iteration number and `CPU(s)' stands for the computational time measured in seconds  (Theorem \ref{theorem3.6} and Theorem \ref{stability}).}
	\label{ss}
	\centering
	\begin{tabular}{|c|cccc| cccc|}
		\hline
		\multirow{2}{*}{$N$} & \multicolumn{4}{c|}{$\alpha$=0.2} &  \multicolumn{4}{c|}{$\alpha$=0.4} \\
		\cline{2-9}
		&$E_N$ &  rate&    iter &CPU(s)& $E_N$ &  rate & iter &CPU(s) \\
		\hline
		
		512       & 2.59e-06     &          & 9     &     0.03     & 8.31e-08     &          &10     &     0.03    \\
		1024       & 8.78e-07     &     1.56     & 9     &     0.06     & 1.82e-08     &     2.19     & 10     &     0.06    \\
		2048       & 2.96e-07     &     1.57     & 9     &     0.10     & 3.96e-09     &     2.20     & 10     &     0.11    \\
		4096       & 9.86e-08     &     1.58     & 9     &     0.22     & 8.63e-10     &     2.20     & 10     &     0.23    \\
		\hline
		Expected order&  & 1.6 & & & &2.2 & &  \\
		\hline
		\hline
		
		\multirow{2}{*}{$N$} & \multicolumn{4}{c|}{$\alpha$=0.6} &  \multicolumn{4}{c|}{$\alpha$=0.8} \\
		\cline{2-9}
		&$E_N$ &  rate  & iter  &CPU(s)  &$E_N$ &  rate   & iter  &CPU(s) \\
		\hline
		512       & 1.56e-09     &          & 10     &     0.03    & 1.88e-11     &          &10     &     0.03    \\
		1024       & 2.25e-10     &     2.80     & 10     &     0.06     & 1.79e-12     &     3.39     & 10     &     0.06    \\
		2048       & 3.23e-11     &     2.80     & 10     &     0.11     & 1.70e-13     &     3.39     & 10     &     0.11    \\
		4096       & 4.65e-12     &     2.80     & 10     &     0.23     & 1.61e-14     &     3.40     & 10     &     0.23    \\
		\hline
		Expected order&  & 2.8 & & & &3.4 & &  \\
		\hline
	\end{tabular}
\end{table}

\begin{table}[htbp]
	\scriptsize
	\caption{Convergence orders and errors of the spectral Petrov-Galerkin method for Example 6.1 with $f=sin(t-\frac{T}{2})$. The expected convergence order is $3\alpha+1-\epsilon$ in the $L^2$-norm (higher than that in Table 1).}
	\label{Table1.1}
	\centering
	\begin{tabular}{|c|cc| cc|cc|cc|}
		\hline
		\multirow{2}{*}{$N$} & \multicolumn{2}{c|}{$\alpha$=0.2} &  \multicolumn{2}{c|}{$\alpha$=0.4} &  \multicolumn{2}{c|}{$\alpha$=0.6} & \multicolumn{2}{c|}{$\alpha$=0.8}\\
		\cline{2-9}
		&$E_N$ &  rate&   $E_N$ &  rate   &$E_N$ &  rate  & $E_N$ &  rate \\
		\hline
		32 &   7.53e-05  & *                &  9.86e-06     & *       &    7.99e-07  & * &    4.39e-08  & *   \\
		64       &   2.42e-05  &    1.64   &      1.81e-06&    2.45  &   9.04e-08   &    3.14 &   3.25e-09   &    3.76 \\
		128       &    7.53e-06  &    1.68   &    3.19e-07 &    2.50  &  9.86e-09   &    3.20 &    2.30e-10 &    3.82 \\
		256       &    2.30e-06  &    1.71    &   5.52e-08   &    2.53  & 1.05e-09      &    3.23 & 1.59e-11     &    3.86\\
		512       &    6.95e-07  &    1.73   &  9.57e-09      &    2.53     & 1.12e-10 &3.24   & 1.09e-12  &3.87 \\
		\hline
	\end{tabular}
\end{table}

\begin{ex}\label{ex2}  Take $T=1$, $f=|\sin(t-\frac{T}{2})|$ in \eqref{FIVP}. Note that $f\in H^{1.5-\epsilon}_{\omega^{\alpha,0}}(I)$.
\end{ex}

\begin{table}[htbp]
	\scriptsize
	\caption{Convergence orders and errors of the spectral Petrov-Galerkin method for Example 6.2 with $f=|\sin(t-\frac{T}{2})|$. The expected convergence order is $\min(3\alpha+1-\epsilon,1.5+\alpha-\epsilon)$ (Theorem \ref{theorem3.6} and Theorem \ref{stability}).}
	\label{Table2}
	\centering
	\begin{tabular}{|c|cc| cc|cc|cc|}
		\hline
		\multirow{2}{*}{$N$}& \multicolumn{2}{c|}{$\alpha$=0.1} &  \multicolumn{2}{c|}{$\alpha$=0.2} &  \multicolumn{2}{c|}{$\alpha$=0.4} & \multicolumn{2}{c|}{$\alpha$=0.6} \\
		\cline{2-9}
		&$E_N$ &  rate&   $E_N$ &  rate   &$E_N$ &  rate  & $E_N$ &  rate \\
		\hline
		32       &5.51e-03     &    *          &  6.32e-03      &    *             & 2.98e-03     &    *    &   1.20e-03     &    *  \\
		64       & 2.35e-03     &  1.23  & 2.50e-03      &   1.34  & 8.22e-04     &    1.86    & 3.04e-04     &    1.98   \\
		128       & 1.03e-03     &    1.19     &9.67e-04     &   1.37    &2.24e-04     &    1.88  & 7.51e-05     &    2.02    \\
		256      & 4.54e-04    &     1.18   & 3.62e-04      &    1.42  & 6.10e-05     &    1.88  &  1.81e-05     &    2.05\\
		512       & 1.88e-04     &     1.27   &  1.26e-04      &   1.53  & 1.61e-05     &    1.92    & 4.22e-06     &    2.11 \\
		\hline
		Expected order&  & 1.3 & &1.6 & &1.9 & & 2.1 \\
		\hline
	\end{tabular}
\end{table}

By Theorem \ref{theorem3.6}, we have $\omega^{0,-\alpha}u \in H^{\min(3\alpha+1-\epsilon, 1.5+\alpha-\epsilon)}_{\omega^{0,\alpha}}(I)$. According to Theorem \ref{conver}, we expect the  convergence order of numerical solutions  is $\min(3\alpha+1-\epsilon, 1.5+\alpha-\epsilon)$ in $L^2_{\omega^{0,-\alpha}}$ norm. In Table \ref{Table2}, we test the convergence orders and errors  for different $\alpha$, and it can be observed that the convergence order depends on the value of $\alpha$, that is, $\min(3\alpha+1-\epsilon, 1.5+\alpha-\epsilon)$, which coincides with the theoretical prediction and the regularity analysis in Theorems \ref{theorem3.6} and \ref{stability}.

\begin{ex}\label{ex3} Take $T=2$, $f=t^{\sigma}e^t$, $\sigma>0$, in \eqref{FIVP}. Note that $f(0)=0$.
	
	\begin{table}[htbp]
		\scriptsize
		\caption{Convergence orders and errors of the spectral Petrov-Galerkin method for Example 6.3 with $f=t^{0.3}e^t$. The expected convergence order is $\min(3\alpha+3-\epsilon,1.6+\alpha-\epsilon)$ in the $L^2_{\omega^{0,-\alpha}}$-norm (Theorem \ref{theorem3.6}, Theorem \ref{stability} and Remark \ref{re}).}
		\label{Table3}
		\centering
		\begin{tabular}{|c|cc| cc|cc|cc|}
			\hline
			\multirow{2}{*}{$N$} & \multicolumn{2}{c|}{$\alpha$=0.2} &  \multicolumn{2}{c|}{$\alpha$=0.4} &  \multicolumn{2}{c|}{$\alpha$=0.6} & \multicolumn{2}{c|}{$\alpha$=0.8}\\
			\cline{2-9}
			&$E_N$ &  rate&   $E_N$ &  rate   &$E_N$ &  rate  & $E_N$ &  rate \\
			\hline
			8       & 1.23e-03     &      * & 1.09e-03     &      *    & 8.88e-04     &      *  & 6.57e-04     &      * \\
			16       & 4.40e-04     &    1.48 & 3.45e-04     &    1.66    & 2.37e-04     &    1.91& 1.51e-04     &    2.12  \\
			32       & 1.46e-04     &    1.59& 9.82e-05     &    1.81    & 5.70e-05     &    2.06 & 3.15e-05     &    2.26 \\
			64       & 4.60e-05     &    1.66 & 2.63e-05     &    1.90    & 1.30e-05     &    2.13    & 6.28e-06     &    2.33\\
			128       & 1.41e-05     &    1.71 & 6.80e-06     &    1.95         & 2.90e-06     &    2.17  & 1.22e-06     &    2.36     \\
			\hline
			Expected order&  & 1.8 & &2.0 & &2.2 & & 2.4 \\
			\hline
		\end{tabular}
	\end{table}
	
	\begin{table}[htbp]
		\scriptsize
		\caption{Convergence orders and errors of the spectral Petrov-Galerkin method for Example 6.3 with $f=te^t$. The expected convergence order is $3\alpha+3-\epsilon$ in the $L^2_{\omega^{0,-\alpha}}$-norm (Theorem \ref{theorem3.6}, Theorem \ref{stability} and Remark \ref{re}).}
		\label{Table3.1}
		\centering
		\begin{tabular}{|c|cc| cc|cc|cc|}
			\hline
			\multirow{2}{*}{$N$} & \multicolumn{2}{c|}{$\alpha$=0.2} &  \multicolumn{2}{c|}{$\alpha$=0.4} &  \multicolumn{2}{c|}{$\alpha$=0.6} & \multicolumn{2}{c|}{$\alpha$=0.8}\\
			\cline{2-9}
			&$E_N$ &  rate&   $E_N$ &  rate   &$E_N$ &  rate  & $E_N$ &  rate \\
			\hline
			8       & 1.65e-05     &      *    & 1.62e-05     &      * & 8.21e-06     &      *  & 2.16e-06     &      *\\
			16       & 2.06e-06     &    3.01   & 1.34e-06     &    3.59& 4.19e-07     &    4.29& 7.14e-08     &    4.92  \\
			32       & 2.19e-07     &    3.23     & 9.08e-08     &    3.89& 1.79e-08     &    4.55& 2.05e-09     &    5.12\\
			64       & 2.13e-08     &    3.36   & 5.51e-09     &    4.04  & 6.99e-10     &    4.68 & 5.38e-11     &    5.25\\
			128       & 1.96e-09     &    3.44   & 3.17e-10     &    4.12 & 2.62e-11     &    4.74  & 1.34e-12     &    5.32\\
			\hline
			Expected order&  & 3.6 & &4.2 & &4.8 & & 5.4 \\
			\hline
		\end{tabular}
	\end{table}
	
\end{ex}
In Table \ref{Table3}, we take $\sigma=0.3$ and test the convergence order of the numerical solution  in  the  $L^2_{\omega^{0,-\alpha}}$-norm. In this case, $f$ has weak singularity at the origin and $f\in H^{2\sigma+1-\epsilon}_{\omega^{\alpha,0}}(I)$, $\epsilon>0$. By Remark \ref{re}, $\omega^{0,-\alpha}u \in H^{\alpha+\min(2\alpha+3,1.6)-\epsilon}_{\omega^{0,\alpha}}(I)$.  From Theorem \ref{conver}, the expected convergence order is $\alpha+\min(2\alpha+3,1.6)-\epsilon$  in  the  $L^2_{\omega^{0,-\alpha}}$-norm, which is consistent with our numerical results in Table \ref{Table3}.

In Table \ref{Table3.1}, taking $\sigma=1$ and $f\in  H^{\infty}_{\omega^{\alpha,0}}(I)$, we test the convergence order of the numerical solution  in  the  $L^2_{\omega^{0,-\alpha}}$-norm. By Remark \ref{re} and Theorem \ref{conver}, the expected convergence order is $3\alpha+3-\epsilon$, $\epsilon>0$ in $L^2_{\omega^{0,-\alpha}}-$norm as observed in Table \ref{Table3.1}. Compared with Example \ref{ex1} for smooth enough source term $f$, the convergence order is increased from $3\alpha+1-\epsilon$ to $3\alpha+3-\epsilon$ when the compatible condition, $f(0)=0$, is satisfied.

\begin{ex}\label{exadd}
	Take the positive definite operator $A=-\Delta$, $T=1$, $\Omega=(0,1)$, $g(x,t)=t^{\sigma}e^t\sin\pi x$ in \eqref{eq:TFDE}. 
\end{ex}
Let the space step size $h=1/M$ with $M\in \mathbb{N}^+$, $x_i=ih$, $i=0,1,\cdots,M$, and $u^h(x_i,t)$ is the difference approximation of $u(x_i,t)$. Without loss of generality, we use the second-order central difference scheme for spatial discretization and 
obtain the following FIVP
\begin{eqnarray*}\label{semi}\begin{aligned}
		&u^h(x_i,t)-\delta_{x}^2u^h(x_i,t)=g(x_i,t),\ 1\leq i\leq M-1,\\
		&u^h(x_i,0)=0, \ 1\leq i\leq M-1,\\
		&u^h(x_0,t)=0,\ u^h(x_M,t)=0,\ t\in I,
\end{aligned}\end{eqnarray*}
where $ \delta_{x}^2u^h(x_i):=\frac{1}{h^2}(u^h(x_{i+1})-2u^h(x_i)+u^h(x_{i-1})).$ 
Then by applying the spectral Petrov-Galerkin method in this paper to temporal discretization, we obtain the fully discretized scheme of problem \eqref{eq:TFDE} 
\begin{equation}\label{full}
	(u_N^h(x_i),{}_{t}D_T^{\alpha}(\omega^{\alpha,0}v_N))-(\delta_{x}^2u_N^h(x_i),v_N)_{\omega^{\alpha,0}}=(g(x_i),v_N)_{\omega^{\alpha,0}},\ \forall v_N\in V_N,\ 1\leq i\leq M-1,
\end{equation}
where the numerical solution $u_N^h(t)=(u_N^h(x_0),u_N^h(x_1),\cdots,u_N^h(x_{M}))^T,$ and $u_N^h(x_i):=t^{\alpha}\sum\limits_{n=0}^{N}u_{in}Q_n^{0,\alpha}(t)$ is the approximation of $u^h(x_i,t)$. 
Taking $v_N=Q_j^{\alpha,0}$, $j=0,1,\cdots,N$, in the scheme \eqref{full}  and using properties of Jacobi polynomial, we observe
\begin{equation}\label{spaced}
	\bf{AU=G},
\end{equation}
where ${\bf{A}}=\rm{kron}(I_1,S)-\rm{kron}(I_2,M)$, ${\bf{U}}=(U_1,U_2,\cdots,U_{M-1})^T$, and ${\bf{G}}=(G_1,G_2,\cdots,G_{M-1})^T$ with
$$U_i=(u_{i0},u_{i1},\cdots,u_{iN})^T,\ G_i=(g_{i0},g_{i1},\cdots,g_{iN})^T.$$ Here $I_1$ is a $(M-1)$-dimensional identity matrix, $I_2$ is a tridiagonal Toeplitz matrix produced by spacial discretization, $\rm{kron}(\cdot,\cdot)$ denote the Kronecker product between two matrices, and $g_{ij}=(g(x_i),Q_{j}^{\alpha,0})_{\omega^{\alpha,0}}$. 

Similar to \eqref{t-iter}, we use the fix point iteration to solve \eqref{spaced}
\begin{equation*}
	{\bf U}^{m+1}={\bf U}^{m}+{\bf P}^{-1}({\bf G}-{\bf A}{\bf U}^m).
\end{equation*}
Note that the coefficient matrix ${\bf A}$ is a block tridiagonal Toeplitz matrix, and its main information is concentrated on the diagonal ($\alpha<0.5$, especially for $\alpha$ close to 0) or tridiagonal ($\alpha\geq 0.5$, especially for $\alpha$ close to 1) of each block. Thus we take the preconditioner $P={\rm kron}(I_1,S)+{\rm kron}(I_2,M_1)$ for relatively small $\alpha$  and $P={\rm kron}(I_1,S)+{\rm kron}(I_2,M_2)$ for relatively large $\alpha$, where $M_1$ and $M_2$ are $(N+1)\times(N+1)$ matrices composed of diagonal entries and tridiagonal entries of matrix $M$, respectively.

In each iteration, we compute the matrix-vector product ${\bf A}{\bf U}^m$ and right hand side ${\bf G}$ by applying the matrix-free implementation in subsection \ref{sub6}, and the errors are computed as follows:
\begin{equation*}
	E_N^h=(h\sum\limits_{i=1}^{M-1}\|u_{ref}^h(x_i)-u_N^h(x_i)\|^2_{\omega^{0,-\alpha}})^{1/2}.
\end{equation*}

It is noted that $g(\cdot,t)\in H^{\infty}_{\omega^{\alpha,0}}(I)$  when $\sigma=0$, and $g(\cdot,t)\in H^{2\sigma+1-\epsilon}_{\omega^{\alpha,0}}(I)$ with $g(\cdot,0)=0$ when $\sigma=0.3$. By Theorem \ref{theorem3.6} and Remark \ref{re}, $\omega^{0,-\alpha}u \in H^{3\alpha+1-\epsilon}_{\omega^{0,\alpha}}(I)$ for $\sigma=0$ and $\omega^{0,-\alpha}u \in H^{\min(3\alpha+3,2\sigma+\alpha+1)-\epsilon}_{\omega^{0,\alpha}}(I)$ for $\sigma=0.3$. Thus, by Theorem \ref{conver}, the temporal convergence order should be $3\alpha+1-\epsilon$ for $\sigma=0$, and $\min(3\alpha+3,2\sigma+\alpha+1)-\epsilon$ for $\sigma=0.3$.  We tabulate the convergence orders and errors of the numerical solution for various values of $N$ and $\alpha$ with $M=2^{10}$, $\sigma=0$ (Table \ref{Tableadd}) and $\sigma=0.3$ (Table \ref{Tableadd2}).   We observe that the numerical results in Tables \ref{Tableadd}-\ref{Tableadd2} illustrate the optimal convergence order. To some extent, it exhibits that our theoretical analysis and numerical method for the FIVP \eqref{FIVP} can provide an effective framework for solving the time fractional diffusion equations.

\begin{table}[htbp]
	\scriptsize
	\caption{Convergence orders and errors of the spectral Petrov-Galerkin method for the time-fractional differential equation \eqref{eq:TFDE} in Example \ref{exadd} with $g=e^t\sin\pi x$. The expected temporal convergence order is $3\alpha+1-\epsilon$ in the $L^2_{\omega^{0,-\alpha}}$-norm (Theorem \ref{theorem3.6}, Theorem \ref{stability}).}
	\label{Tableadd}
	\centering
	\begin{tabular}{|c|cc| cc||c|cc|cc|}
		\hline
		\multirow{2}{*}{$N$} & \multicolumn{2}{c|}{$\alpha$=0.2} &  \multicolumn{2}{c||}{$\alpha$=0.4} &\multirow{2}{*}{$N$} &  \multicolumn{2}{c|}{$\alpha$=0.6} & \multicolumn{2}{c|}{$\alpha$=0.8}\\
		\cline{2-5}\cline{7-10}
		&$E_N$ &  rate&   $E_N$ &  rate&   &$E_N$ &  rate  & $E_N$ &  rate \\
		\hline
		256       & 2.37e-05    &      *    & 1.54e-06     &      * &32& 3.15e-06     &      *  & 4.32e-07     &      *\\
		512       & 9.88e-06     &    1.26   & 3.49e-07          &    2.15& 64&4.70e-07     &    2.74& 4.13e-08     &    3.39  \\
		1024       & 3.96e-06     &    1.32     & 7.75e-08      &    2.17&128& 6.83e-08     &    2.78& 3.98e-09     &    3.38\\
		2048       &  1.53e-06     &    1.37   & 1.70e-08          &    2.18  &256& 9.87e-09     &    2.79 & 3.81e-10     &    3.38\\
		4096       & 5.70e-07     &    1.43   & 3.73e-09         &    2.19 &512& 1.42e-09     &    2.80  & 3.64e-11     &    3.39\\
		\hline
		Expected order&  & 1.6 & &2.2 &Expected order& &2.8 & & 3.4 \\
		\hline
	\end{tabular}
\end{table}

\begin{table}[htbp]
	\scriptsize
	\caption{Convergence orders and errors of the spectral Petrov-Galerkin method for the time-fractional differential equation \eqref{eq:TFDE} in Example \ref{exadd} with $g=t^{0.3}e^t\sin\pi x$. The expected temporal convergence order is $\min(3\alpha+3-\epsilon,1.6+\alpha-\epsilon)$  in the $L^2_{\omega^{0,-\alpha}}$-norm (Theorems \ref{theorem3.6}, \ref{stability} and Remark \ref{re}).}
	\label{Tableadd2}
	\centering
	\begin{tabular}{|c|cc| cc||c|cc|cc|}
		\hline
		\multirow{2}{*}{$N$} & \multicolumn{2}{c|}{$\alpha$=0.2} &  \multicolumn{2}{c||}{$\alpha$=0.4} &\multirow{2}{*}{$N$} &  \multicolumn{2}{c|}{$\alpha$=0.6} & \multicolumn{2}{c|}{$\alpha$=0.8}\\
		\cline{2-5}\cline{7-10}
		&$E_N$ &  rate&   $E_N$ &  rate&   &$E_N$ &  rate  & $E_N$ &  rate \\
		
		\hline
		128      & 9.06e-06     &      *    & 7.78e-06     &      *&32 & 6.28e-05     &      *  & 3.72e-05     &      *\\
		256       & 3.14e-06     &    1.53   & 2.16e-06     &    1.85&64& 1.58e-05     &    1.99& 7.95e-06     &    2.23  \\
		512       & 1.06e-06     &    1.57     & 5.72-07     &    1.91&128& 3.72e-06     &    2.09& 1.62e-06     &    2.30\\
		1024       & 3.46e-07     &    1.61   & 1.47e-07    &   1.95 &256  & 8.42e-07     &    2.15 & 3.19e-07    &    2.34\\
		2048       & 1.08e-07     &    1.68   & 3.67e-08     &    2.00 &512& 1.87e-07     &    2.17  & 6.21e-08     &    2.36\\
		\hline
		Expected order&  & 1.8 & &2.0 &Expected order& &2.2 & & 2.4 \\
		\hline
	\end{tabular}
\end{table}

\section{Conclusion}
In this paper, we have studied a spectral Petrov-Galerkin method for  fractional initial value problems with Caputo fractional derivative,  which can provide an efficient framework for dealing with some time-fractional differential problems.
To capture the singularity of the solution at the origin, we have analyzed the regularity of the solution in weighted Sobolev space by using a bootstrapping technique.
When $\lambda>0$, if the regularity index of $f$ is $r\geq-\alpha$ in weighted Sobolev space, the regularity index of $t^{-\alpha}u$ is $\alpha+\min(r,2\alpha+1-\epsilon)$, where $\epsilon>0$ is an arbitrary small number and $\alpha$ is the fractional order. 
Moreover,  the stability and an optimal error estimate of the spectral Petrov-Galerkin method have been established in a weighted $L^2$-norm.

Numerical examples verify the theoretical findings for the FIVPs \eqref{FIVP} with  source terms which are smooth enough  (see Example \ref{ex1}),  with weak singularity at an interior (see Example \ref{ex2}) and  with weak singularity at the origin (see Example \ref{ex3}).  That is,  the tested convergence orders   in the weighted $L^2$-norm are correspondingly consistent with the expected ones in Theorem \ref{theorem3.6}, Theorem \ref{stability} and Remark \ref{re}. In additional, it is exhibited that our theoretical analysis and numerical method are also applicable for a large class of problems including the time-fractional diffusion equations (see Example \ref{exadd}). 

To implement the presented  spectral Petrov-Galerkin method efficiently, we have also presented a fast iteration algorithm for solving the resulting linear system, which reduces the computational cost from  $O(N^3)$ to  $O(N\log^2N)$ and the storage from $O(N^2)$ to $O(N)$, compared with the direct solver.

\section*{Acknowledgment}
This work was partially supported by the National Natural Science Foundation of China (No. 12071073 and No. 11671083).

\end{document}